\documentclass{imsart}

\usepackage{amsfonts}
\usepackage{amssymb}
\usepackage{graphicx}
\usepackage{amsmath}
\usepackage{color}

\setcounter{MaxMatrixCols}{10}

\newtheorem{theorem}{Theorem}

\newtheorem{condition}{Condition}

\newtheorem{definition}{Definition}

\newtheorem{lemma}{Lemma}

\newtheorem{remark}{Remark}

\newenvironment{proof}[1][Proof]{\noindent\textbf{#1.} }{\ \rule{0.5em}{0.5em}}

\newcommand{\cc}{\mathbb C}
\newcommand{\rr}{\mathbb R}
\newcommand{\ee}{\mathbb E}
\newcommand{\pp}{\mathbb P}
\newcommand{\mm}{\mathbb M}
\newcommand{\hh}{\mathbb H}
\newcommand{\id}{\mathbb I}

\definecolor{jens}{rgb}{0,0.4,0.8}

\begin{document}

\begin{frontmatter}

\title{Uncertainty Quantification for Matrix Compressed Sensing and Quantum Tomography Problems}
\runtitle{Uncertainty Quantification for Matrix CS}

\begin{aug}
\author{\fnms{Alexandra} \snm{Carpentier}\thanksref{m1}\ead[label=e1]{carpentier@maths.uni-potsdam.de}},
\author{\fnms{Jens} \snm{Eisert}\thanksref{m2}\ead[label=e2]{jense@physik.fu-berlin.de}}, \\
\author{\fnms{David} \snm{Gross}\thanksref{m3}\ead[label=e3]{david.gross@thp.uni-koeln.de}}
\author{\fnms{Richard} \snm{Nickl}\thanksref{m4}\ead[label=e4]{r.nickl@statslab.cam.ac.uk}}
%\ead[label=u1,url]{http://www.foo.com}}

\runauthor{A.~Carpentier, J.~Eisert, D.~Gross and R.~Nickl}

\affiliation{Universit\"at Potsdam\thanksmark{m1}, Freie Universit\"at Berlin\thanksmark{m2}, \\ Universit\"at zu K\"oln \thanksmark{m3}, and University of Cambridge\thanksmark{m4}}

\address{
\printead{e1}\\
\phantom{E-mail:\ }\printead*{e2}}

\address{
\printead{e3}\\
\printead{e4}}
\end{aug}

\maketitle

\begin{abstract}
We construct minimax optimal non-asymptotic confidence sets for low rank matrix recovery algorithms. These are employed to devise sequential sampling procedures that guarantee recovery of the true matrix in Frobenius norm after a data-driven stopping time $\hat n$ for the number of measurements that have to be taken. With high probability, this stopping time is minimax optimal. We detail applications to quantum tomography problems where measurements arise from Pauli observables. We also give a theoretical construction of a confidence set for the density matrix of a quantum state that has optimal diameter in nuclear norm. The non-asymptotic properties of our confidence sets are investigated in a simulation study.

\noindent\textit{Key words: Low rank recovery, quantum information, confidence sets}
\end{abstract}

\end{frontmatter}

\section{Introduction}

Consider the high-dimensional matrix trace regression model
\begin{equation} \label{model}
Y_i = tr (X^i \theta) + \varepsilon_i,\quad i=1, \dots, n,
\end{equation}
where the $\varepsilon_i$'s are random noise variables, independent of the random design matrices $X^i$, and where the matrix $\theta$ is the object of inferential interest. We denote the law of $(Y,X)$ given $\theta$ by $\mathbb P_\theta$. To reflect the structure of the main application we have in mind -- quantum tomography, introduced in detail below -- we assume that $X^i$ and $\theta$ are both $d \times d$ square matrices, and study the case where the number $n$ of measurements taken may be smaller than the effective parameter dimension $d^2$. Recovery of $\theta$ in such situations is still possible by compressed sensing techniques \cite{CP11,KLT11,K11,Recht}, under two main structural assumptions on the model: 1) the matrix $\theta$ is of low rank and 2) the measurement matrices $X^i$ satisfy the restricted isometry (or a related coherence) property. In this case recovery of a rank $k$ matrix $\theta$ is possible in Frobenius distance $\|\cdot\|_F$ by, e.g., the Matrix Lasso $\hat \theta_{n}$: for any $\epsilon>0$ and with high $\mathbb P_\theta$-probability, $$\|\hat \theta_{n} - \theta\|_F <\epsilon~~~\text{as soon as }~ n \gtrsim kd \overline{\log} d,$$ where $\overline \log d = \log^\eta d, \eta>0,$ is the so-called `polylog' function. The design used in quantum tomography is such that the $X^i$ are randomly drawn from a basis $\{E_1, \dots, E_{d^2}\}$ of the space of $d \times d$ matrices, and one samples fewer than all $d^2$ basis coefficients $tr(E_i \theta)$ without losing recovery guarantees for low rank matrices $\theta$. In experimental settings (e.g., \cite{Blatt}), $d=2^N$ where $N$ is a possibly large number of particles, but $\theta$ will represent an approximately pure quantum state, motivating the low rank hypothesis and explaining the interest of quantum information theorists in dimension reduction methods (see the appendix and \cite{GLFBE10, G11, L11, FGLE12, Guta, Riofrio}).

%\smallskip

In practice the implementation of the compressed sensing paradigm requires a way to decide how many measurements $n$ should be taken. The preceding theoretical bound $n \gtrsim k d \overline \log d$ is not useful for this because it may involve unspecified constants, but also, more importantly, because the rank $k$ of $\theta$ is typically not known. Instead one can try to find a \textit{data driven stopping rule} $\hat n$ that guarantees that recovery with precision $\epsilon$ occurs after $\hat n$ measurements, with high probability. In the quantum tomography context such stopping rules are called `certificates' (see Section IV in \cite{FGLE12}), as they certify the reconstruction of the true quantum state $\theta$. It is not difficult to see, and will be made precise below, that the construction of such stopping rules is intimately connected to the construction of a (sequential) confidence region for the unknown parameter $\theta$, and due to its importance in applications this topic has received considerable attention recently by physicists, see \cite{Christandl,Robin,temme2015quantum,audenaert2009quantum, FGLE12, shang2013optimal}. None of the previous constructions has succeeded, however, in constructing an \textit{optimal} stopping rule for which $\hat n \approx kd \overline \log d$ holds with high probability. 

%\smallskip

The main contribution of the present paper is to construct optimal non-asymptotic Frobenius norm confidence regions for low rank parameters in the model (\ref{model}), and to use them to devise optimal sequential data driven stopping rules $\hat n$ (`certificates') for the measurement process. That such procedures exist may at first look surprising in view of negative results in the `sparse' compressed sensing setting in \cite{NvdG13}, but our results reveal the more favourable information-theoretic structure of the matrix model. While our techniques are based on unbiased risk estimation ideas that were first used in nonparametric statistics (see \cite{L89, RV06}, and also \cite{BN13}) and that apply in a general setting, we lay out the details for a basic (sub-) Gaussian design and noise model, as well as for the Pauli observation scheme relevant in quantum tomography (see \cite{FGLE12} and Condition \ref{design}b) below). We shall also address the more difficult question of constructing confidence regions for a quantum state matrix in the stronger nuclear norm. Relationships between our findings and the recent literature on confidence regions for high-dimensional statistical parameters are discussed at the end. We also investigate the performance of our procedures in basic simulation study.

\section{The framework of matrix compressed sensing}

\subsection{Notation}

Denote by $\mm_d(\mathbb K)$ the space of $d \times d$ matrices with entries in $\mathbb K = \cc$ or $\mathbb K=\rr$. We write $\|\cdot\|_F$ for the usual Frobenius norm on $\mm_d(\mathbb K)$ arising from the inner product $tr (A^TB) = \langle A, B \rangle_F$. Moreover let $\hh_d(\mathbb K)$ be the set of all Hermitian matrices (equal to the set of all symmetric $d \times d$ matrices when $\mathbb K = \mathbb R$). The norm symbol $\|\cdot\|$ denotes the standard Euclidean norm on $\cc^n$ arising from the Euclidean inner product $\langle \cdot, \cdot \rangle$.

%\smallskip

We denote the usual operator norm on $\mm_d(\mathbb K)$ by $\|\cdot\|_{op}$. For $M\in \mm_d(\mathbb K)$ let $(\lambda_k^2: k =1, \dots, d)$ be the eigenvalues of $M^TM$ (which are all real-valued and positive). The $l_1$-Schatten, \textit{trace}, or \textit{nuclear} norm of $M$ is defined as
$$\|M\|_{S_1} = \sum_{j\leq d} |\lambda_j|.$$
Note that for any matrix $M$ of rank $1\le r \le d$,
\begin{equation} \label{l1l2}
\|M\|_{F} \leq \|M\|_{S_1}\leq \sqrt{r}\|M\|_{F}.
\end{equation}

%\smallskip

We will consider parameter subspaces of $\hh_d(\mathbb K)$ described by low
rank constraints on $\theta$, and denote by $R(k)$ the space of all
Hermitian $d \times d$ matrices that have rank at most $k$, $k \le d$.
In quantum tomography applications, we may assume an additional `shape
constraint', namely that $\theta$ is a density matrix of a 
quantum state, and hence contained in \emph{state space} $$\Theta_{+} =
\{\theta \in \hh_d(\mathbb K): tr(\theta)=1, \theta \succeq 0\},$$ where
$\theta \succeq 0$ means that $\theta$ is positive semi-definite. In
fact, in most situations, we will only require the bound
$\|\theta\|_{S_1} \le 1$ which holds for any $\theta$ in
$\Theta_+$.

%\smallskip

\subsection{Sensing matrices}

We now specify assumptions on the design matrices $X^i$ used in our observation model (\ref{model}). When $\theta$ has real-valued entries we shall restrict to design matrices
$X^i$ with real-valued entries too, and for general $\theta \in
\hh_d(\cc)$ we shall assume $X^i \in \hh_d(\cc)$. This
way, in either case, the measurements $tr(X_i \theta)$'s and hence the
$Y_i$'s are all real-valued. Note that in Part a) below the design matrices are not Hermitian but our results can easily be generalised to symmetrised sub-Gaussian ensembles (as those considered in ref. \cite{K11}). Part b) corresponds to the quantum tomography measurement model used in \cite{G11, L11, FGLE12, GLFBE10} -- we refer to the appendix for a detailed derivation.

%More concretely the sensing matrices $X^i$ that we shall consider are described in the following assumption, which encompasses both a prototypical compressed sensing setting -- where we can think of the matrices $X^i$ as i.i.d.~draws from a Gaussian ensemble $(X_{m,k}) \sim^{iid} N(0,1)$  -- as well as the `random sampling from a basis of $\mm_d(\cc)$' scenario.
%relevant in quantum tomography problems, where frequently the Pauli
%basis is used (see, e.g., refs. \cite{G11, L11, FGLE12, GLFBE10}).  The systematic study of the latter has been initiated by quantum physicists \cite{G11, L11}, as it contains, in particular, the case of Pauli basis measurements \cite{GLFBE10, FGLE12} frequently employed in quantum tomography problems.

\begin{condition} \label{design} 
\begin{itemize}
\item[a)] {\bf $\theta \in \hh_d(\rr)$, `isotropic' sub-Gaussian design:} The random variables $(X^i_{m,k})$, $1 \le m,k \le d, i =1, \dots, n,$ generating the entries of the random matrix $X^i$ are i.i.d.~distributed across all indices $i,m,k$ with mean zero and unit variance. Moreover, for every $\theta \in \mm_d(\rr)$ such that $\|\theta\|_F \le 1$ the real random variables $Z_i=tr(X^i\theta)$ are sub-Gaussian:  for some fixed constants $\tau_1, \tau_2>0$ independent of $\theta$, 
$$\ee e^{\lambda Z_i} \le \tau_1 e^{\lambda^2 \tau_2^2} ~\forall \lambda \in \rr.$$

\item[b)]  {\bf $\theta \in \hh_d(\cc)$, random sampling from a basis (`Pauli design'):
} Let $\{E_1, \dots, E_{d^2}\} \subset \hh_d(\cc)$ be a basis of $\mm_d(\cc)$
that is orthonormal for the scalar product $\langle \cdot, \cdot \rangle_F$ and such that the operator norms satisfy, for all $i=1, \dots, d^2$, $$\|E_i\|_{op} \le \frac{K}{\sqrt d},$$ for some $K>0$. [In the Pauli basis case we have $K=1$.] Assume the $X^i$, $ i=1, \dots, n,$ are draws from the finite family $\mathcal E = \{d  E_i: i=1, \dots, d^2\}$ sampled uniformly at random. 
%\item[c)]  {\bf \je{$\theta \in \hh_d(\cc)$, random Pauli bases measurements: }}
\end{itemize}
\end{condition}

The above examples all obey the \textit{matrix restricted isometry property}, that we describe now. Note first that if $\mathcal X: \rr^{d\times d} \to \rr^n $ is the linear `sampling' operator 
\begin{equation} \label{sampling}
\mathcal X: \theta \mapsto \mathcal X \theta = (tr(X^1 \theta) , \dots, tr(X^n \theta))^T,
\end{equation} 
so that we can write the model equation (\ref{model}) as $Y=\mathcal X \theta + \varepsilon$, then in the above examples we have the `expected isometry' $$\ee \frac{1}{n}\|\mathcal X \theta\|^2  = \|\theta\|_F^2.$$ Indeed, in the isotropic design case we have 
\begin{equation} \label{isoexp}
\frac{1}{n}\ee\|\mathcal X \theta\|^2= \frac{1}{n}\sum_{i=1}^n \ee \left(\sum_m \sum_k X^{i}_{m,k} \theta_{m,k} \right)^2 = \sum_m \sum_k \ee X^2_{m,k} \theta_{m,k}^2 = \|\theta\|_F^2,
\end{equation}
and in the `basis case' we have, from Parseval's identity and since the $X^i$'s are sampled uniformly at random from the basis,
\begin{equation}\label{paulexp}
\frac{1}{n}\ee \|\mathcal X \theta\|^2= \frac{d^2}{n} \sum_{i=1}^n \sum_{j=1}^{d^2} \Pr(X^i=E_j) |\langle E_j, \theta \rangle_F|^2 = \|\theta \|_F^2.
\end{equation}
The restricted isometry property (RIP) requires that this `expected isometry' holds, up to constants and with probability $\ge 1-\delta$, for a given realisation of the sampling operator, and for all $d \times d$ matrices $\theta$ of rank at most $k$:
\begin{equation} \label{RIP}
\sup_{\theta \in R(k)}\left|\frac{\frac{1}{n}\|\mathcal X \theta\|^2 - \|\theta\|_F^2}{\|\theta\|_F^2} \right| \le \tau_n(k),
\end{equation}
where $\tau_n(k)$ are some constants that may depend, among other things, on the rank $k$ and the `exceptional probability' $\delta$. For the above examples of isotropic and Pauli basis design inequality (\ref{RIP}) can be shown to hold with 
\begin{equation} \label{tau}
\tau^2_n(k) = c^2 \frac{k d \cdot \overline{\log} d}{n},
\end{equation}
where $c=c(\delta)=O(1/\delta^2)$ as $\delta \to 0$ is a fixed constant.  See refs. \cite{CP11, L11} for these results.

\subsection{Gaussian and Bernoulli errors, and Pauli observables} \label{bernoulli}

We still have to specify the distribution of the errors $\varepsilon_i$ in the model (\ref{model}). In the quantum tomography setting of Condition \ref{design}b), if we fix an element $E_i \in \mathcal E$ for the moment, then as detailed in the appendix the observations $Y_i= d tr(E_i\theta) + \varepsilon_i$ are themselves an average of repeated samples from a Bernoulli random variable $B_i$ taking values $\{1,-1\}$ with probabilities given by
$$\pp(B_i=1)=\frac{1+\sqrt d tr(E_i \theta)}{2}.$$ More precisely, $$Y_i = \frac{\sqrt d}{T}\sum_{j=1}^T B_{i,j} = d \cdot tr(E_i \theta) + \varepsilon_i$$ where $$\varepsilon_i = \frac{\sqrt d}{T}\sum_{j=1}^T (B_{i,j}-\ee B_{i,j})$$ is the effective error arising from the measurement procedure making use of $T$ preparations to estimate each quantum mechanical expectation value. We could work with this Bernoulli error model directly, but since the $\varepsilon_i$'s are themselves sums of independent random variables, an approximate Gaussian error model will be appropriate, too. Note further that 
\begin{equation} \label{bernbds}
|\varepsilon_i| \le 2\sqrt d,~\ee \varepsilon_i^2 \le \frac{d}{T} {\rm Var}(B_{i,1}) \le \frac{d}{T}
\end{equation}
so the variances $E\varepsilon_i^2 = \sigma^2$ are bounded by $d/T$. A natural assumption is then
\begin{condition}\label{gausse}
The $\varepsilon_i, i=1, \dots, n,$ are i.i.d.~$N(0, \sigma^2)$  where $\sigma^2 \le v$ for some known constant $v$.
\end{condition}
This unifies the exposition for both designs considered in Condition \ref{design}, but we note that our proofs are valid in the exact Bernoulli error model as well, see Remark \ref{bernprf} below.

\subsection{Minimax estimation under the RIP}

Assuming the matrix RIP from (\ref{RIP}) to hold and Gaussian noise $\varepsilon \sim N(0,\sigma^2I_n)$, one can show that the minimax risk for recovering a Hermitian rank $k$ matrix is
\begin{equation} \label{mrisk}
\inf_{\hat \theta} \sup_{\theta \in R(k)} \ee _\theta \|\hat \theta- \theta\|^2_F \simeq \sigma^2 \frac{d k}{n},
\end{equation}
where $\simeq$ denotes two-sided inequality up to universal constants. For the upper bound one can use the nuclear norm minimisation procedure or matrix Dantzig selector from Cand\`es and Plan \cite{CP11} (see also \cite{L11} for the case of Pauli-design), and needs $n$ to be large enough so that the matrix RIP holds with $\tau_n(k)<c_0$ where $c_0$ is a small enough numerical constant. Such an estimator $\tilde \theta$ then satisfies, for every $\theta \in R(k)$ and those $n \in \mathbb N$ for which $\tau_n(k)<c_0$,
\begin{equation} \label{cplan}
\|\tilde \theta- \theta\|^2_F \leq D(\delta) \sigma^2 \frac{k d}{n},
\end{equation}
with probability $\ge 1-2\delta$, and with the constant $D(\delta)$ depending on $\delta$ and also on $c_0$ (suppressed in the notation). Note that the results in \cite{CP11} use a different scaling in sample size in their Theorem 2.4, but eq.\ (II.7) in that reference explains that this is just a matter of renormalisation. The same bound holds for the Bernoulli noise model from Subsection \ref{bernoulli}, see \cite{FGLE12}.

\section{Main results} \label{UQlr}

We now turn to the problem of quantifying the uncertainty of estimators $\tilde \theta$ that satisfy the risk bound (\ref{cplan}). In fact the procedures we construct could be used for any estimator of $\theta$, but the conclusions are most interesting when used for minimax optimal estimators $\tilde \theta$. 

\subsection{Confidence sets and sequential sampling protocols}

From a statistical point of view the problem at hand is the one of constructing a confidence set for $\theta$: a data-driven subset $C_n$ of $\mm_d(\cc)$ that is `centred' at $\tilde \theta$, that satisfies $$\pp_\theta (\theta \in C_n) \ge 1- \alpha,~~~0<\alpha<1,$$ for a chosen `coverage' or significance level $1-\alpha$, and such that the Frobenius norm diameter $|C_n|_F$ reflects the accuracy of estimation, that is, it satisfies, with high probability, $$|C_n|^2_F \approx \|\tilde \theta - \theta\|_F^2.$$ In particular such a confidence set provides, through its diameter $|C_n|_F$, a data-driven estimate of how well the algorithm has recovered the true matrix $\theta$ in Frobenius-norm loss, and in this sense provides a quantification of the uncertainty in the estimate. 

In an experimental situation confidence sets $(C_n: n \in \mathbb N)$ can be used to decide sequentially whether more measurements should be taken (to improve the recovery rate), or whether a satisfactory performance has been reached. Concretely, for given $n$ we check if $|C_n|_F \le \epsilon$, and continue to take further measurements if not. Assuming $\tilde \theta$ satisfies the minimax optimal risk bound $dk/n$ from (\ref{cplan}), we expect to need, ignoring constants, $$ \frac{d k}{n}<  \epsilon^2 \text{ and hence at least } n > \frac{d k}{\epsilon^2} $$ measurements. Note that we also need the RIP to hold with $\tau_n(k)$ from (\ref{tau}) less than a small constant $c_0$, which requires the same number of measurements, increased by a further poly-log factor of $d$ (and independently of $\sigma$). The goal is then to prove that a sequential procedure based on $C_n$ does \textit{not} require more than approximately $$n>\frac{dk \overline{\log} d}{\epsilon^2}$$ samples (with high probability). This is made precise in the following definition, where we recall that $R(k)$ denotes the set of $d \times d$ Hermitian matrices of rank at most $k \le d$.

\begin{definition} \label{alg} Let $\epsilon>0, \delta>0$ be given constants. An algorithm $\mathcal A$ returning a $d \times d$ matrix $\hat \theta$ after $\hat n \in \mathbb N$ measurements in model (\ref{model}) is called an $(\epsilon, \delta)$ - adaptive sampling procedure if, with $\pp_\theta$-probability greater than $1-\delta$, the following properties hold for every $\theta \in R(k)$ and every $1 \le k \le d$: 
\begin{equation} \label{recov}
\|\hat \theta - \theta \|_{F} \le \epsilon,
\end{equation}
and, for some positive constants $C(\delta), \gamma,$ the stopping time $\hat n$ satisfies 
\begin{equation} \label{messungen}
\hat n \le C(\delta) \frac{k d (\log d)^\gamma}{\epsilon^2} .
\end{equation}
\end{definition}

Such an algorithm provides recovery at given accuracy level $\epsilon$ with $\hat n$ measurements of minimax optimal order of magnitude (up to a poly-log factor), and with probability greater than $1-\delta$. The sampling algorithm is adaptive since it does not require the knowledge of $k$, and since the number of measurements required depends only on $k$ and not on the `worst case' rank $d$.  

%\smallskip

Our first main result is the following theorem, whose proof relies on the construction of non-asymptotic confidence sets $C_n$ for $\theta$ at any sample size $n$, given in the next subsection. 

\begin{theorem} \label{adsamp}
Consider observations in the model (\ref{model}) under Conditions \ref{design}b) and \ref{gausse}, and where $\theta \in \Theta_+$. Then an adaptive sampling algorithm in the sense of Definition \ref{alg} exists for any $\epsilon, \delta>0$. 
\end{theorem}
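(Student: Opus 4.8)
The plan is to reduce the existence of an adaptive sampling procedure to the construction of a sequence of Frobenius-norm confidence balls $C_n = \{\vartheta \in \hh_d(\cc): \|\vartheta - \tilde\theta\|_F \le R_n\}$, centred at a minimax-optimal estimator $\tilde\theta$ (the matrix Dantzig selector of \cite{CP11, L11} obeying (\ref{cplan}), projected onto $\Theta_+$), with a data-driven radius $R_n$, and to stop at the first sample size on a geometric grid $n \in \{2^j\}$ at which $R_n \le \epsilon$, returning $\hat\theta = \tilde\theta$. Two properties of $R_n$ drive everything. \emph{Coverage}, $\pp_\theta(\theta \in C_n) \ge 1-\alpha$, yields the recovery guarantee (\ref{recov}): on the coverage event $\|\hat\theta - \theta\|_F \le R_{\hat n} \le \epsilon$ at the stopping time. \emph{Tightness}, $R_n \lesssim \|\tilde\theta - \theta\|_F$ up to lower-order fluctuations, combined with the risk bound (\ref{cplan}) and the RIP event (\ref{RIP}) holding with $\tau_n(k) < c_0$, forces $R_n \le \epsilon$ as soon as $n \gtrsim kd\,\overline{\log} d/\epsilon^2$, giving the stopping-time bound (\ref{messungen}) with the optimal dependence on the unknown rank $k$. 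A union bound over the $O(\log)$ grid points at which stopping can occur controls the total failure probability below $\delta$, uniformly over all $\theta \in R(k)$ and all $k$, since the construction never uses $k$.

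The radius is obtained from unbiased estimation of the squared loss $\|\tilde\theta - \theta\|_F^2$. I would split the sample, using one half to form $\tilde\theta$ and the independent other half (size $n'$, sampling operator $\mathcal X'$, responses $Y'$) to evaluate the residuals. With $\eta = \tilde\theta - \theta$, the decomposition
$$\frac1{n'}\|Y' - \mathcal X' \tilde\theta\|^2 = \frac1{n'}\|\mathcal X' \eta\|^2 + \frac2{n'}\langle \varepsilon', \mathcal X'\eta\rangle + \frac1{n'}\|\varepsilon'\|^2$$
has, conditionally on $\tilde\theta$, first-term mean $\|\eta\|_F^2$ by the expected isometry (\ref{paulexp}), a \emph{mean-zero} cross term — this is exactly what sample splitting buys, as $\tilde\theta$ is independent of the second-half noise — and last-term mean $\sigma^2$. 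Subtracting the (known or estimated, cf. $\sigma^2 \le v$ via (\ref{bernbds})) noise level then yields an essentially unbiased estimator $\hat r_n^2$ of $\|\tilde\theta - \theta\|_F^2$, and I set $R_n^2 = \hat r_n^2 + z_n$ with an inflation $z_n$ chosen to dominate the conditional fluctuations at level $1-\alpha$ (solving the resulting quadratic for $R_n$, since the leading fluctuation is multiplicative in $\|\eta\|_F$).

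The technical core, and the main obstacle, is the conditional concentration of the three terms, and in particular arranging that $z_n$ is small enough that $R_n$ stays of the order of the true error $\|\tilde\theta - \theta\|_F$ whenever the latter exceeds $\epsilon$. Conditionally on $\tilde\theta$ the matrix $\eta$ is \emph{fixed}, so $\frac1{n'}\|\mathcal X'\eta\|^2$ is an average of i.i.d.\ terms to which I would apply a Bernstein inequality. The decisive inputs are that each design matrix equals $dE$ for a basis element with $\|E\|_{op} \le K/\sqrt d$, so by H\"older $|tr(X^i\eta)| \le d\|E\|_{op}\|\eta\|_{S_1} \le K\sqrt d\,\|\eta\|_{S_1}$, and that the state-space hypothesis $\theta \in \Theta_+$ gives $\|\eta\|_{S_1} \le \|\theta\|_{S_1} + \|\tilde\theta\|_{S_1} \le 2$. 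These bounds make the Bernstein variance and range terms of order $\|\eta\|_F\sqrt{d/n'}$ and $d/n'$, i.e.\ \emph{lower order} relative to $\|\eta\|_F^2$ once $n' \gtrsim d/\epsilon^2$, which is dominated by the target stopping time; the same boundedness controls the cross term and (for Gaussian or Bernoulli noise) the $\chi^2$-type fluctuation of $\frac1{n'}\|\varepsilon'\|^2$.

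It is precisely this favourable per-measurement scaling — the loss can be estimated at a faster rate than it decays, because the magnitude of each sensing functional is suppressed by $\|E\|_{op} \le K/\sqrt d$ together with the nuclear-norm control from $\Theta_+$ — that distinguishes the matrix problem from the sparse-vector setting of \cite{NvdG13}, where such self-assessment provably fails. This is what makes an optimal data-driven estimate of $\|\tilde\theta - \theta\|_F$, and hence an optimal stopping rule in the sense of Definition \ref{alg}, possible, completing the proof of the theorem.
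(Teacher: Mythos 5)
Your overall architecture — sample splitting, a geometric grid of sample sizes, an RSS-based unbiased estimate of $\|\tilde\theta-\theta\|_F^2$ with a Bernstein bound on $\frac{1}{n'}\|\mathcal X'\eta\|^2$ driven by $|tr(X^i\eta)|\le d\|E\|_{op}\|\eta\|_{S_1}\le 2K\sqrt d$, coverage giving (\ref{recov}) and the diameter bound giving (\ref{messungen}) via a union bound over the $O(\log(d/\epsilon))$ grid points — is exactly the paper's strategy, and your identification of the quantum shape constraint $\|\eta\|_{S_1}\le 2$ as the decisive input matches Lemma \ref{bernstein}b). However, there is one genuine gap: your claim that the fluctuation of the pure noise term $\frac{1}{n'}\|\varepsilon'\|^2$ around $\sigma^2$ is controlled at a level of lower order than $\|\eta\|_F^2$ once $n'\gtrsim d/\epsilon^2$ is false. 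That term is a centred sum of $n'$ i.i.d.\ variables and fluctuates at scale $\sigma^2/\sqrt{n'}$ — this has nothing to do with the boundedness of the design — so forcing it below $\epsilon^2$ requires $n'\gtrsim \sigma^4/\epsilon^4$. Since $1/\epsilon^4 \le kd(\log d)^\gamma/\epsilon^2$ only when $\epsilon^2\gtrsim 1/(kd(\log d)^\gamma)$, your single construction yields a stopping time of order $1/\epsilon^4$ rather than $kd\,\overline{\log}\,d/\epsilon^2$ for small $\epsilon$, i.e.\ precisely in the regime where the required number of measurements exceeds $d^2$. Definition \ref{alg} demands the bound (\ref{messungen}) for \emph{every} $\epsilon>0$, so this breaks the theorem as you have argued it.

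The paper closes this hole by using two different confidence sets depending on the scale: the RSS set (\ref{RSSconf}) for $2^m<d^2$ (where $1/\sqrt n \le d/n$ and the $\chi^2$ fluctuation is harmless), and the re-averaging construction of Subsection \ref{reav} for $2^m\ge d^2$. In the latter, each of the $d^2$ Pauli coefficients is measured $m=n/d^2$ times and the repeated measurements are averaged, so the residual statistic involves only $d^2$ independent noise variables of variance $\sigma^2$ and its noise-variance fluctuation drops to $O(\sigma^2 d/n)$, restoring compatibility with the rate $kd/n$ for every $k\ge 1$. You would need to incorporate this (or an equivalent device) for grid points beyond $d^2$ to obtain (\ref{messungen}) for all $\epsilon$. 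Apart from this, your proof is sound and follows the paper's route.
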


The result above holds for isotropic design from Condition \ref{design}a) too, without the constraint $\theta \in \Theta_+$, see Remark \ref{mod} below. For Pauli design  the assumption $\theta \in \Theta_+$ (instead of just $\theta \in \mathbb M_d(\mathbb K)$) is, however, necessary: Else the example of $\theta =0 $ or $\theta=E_i$ -- where $E_i$ is an arbitrary element of the Pauli basis -- demonstrates that the number of measurements has to be at least of order $d^2$: otherwise with positive probability $E_i$ is not drawn at a fixed sample size. On this event both the measurements and $\hat \theta$ coincide under the laws $\pp_0$ and $\pp_{E_i}$, so we cannot have $\|\hat \theta-0\|_F < \epsilon$ AND $\|\hat \theta - E_i\|_F<\epsilon$ simultaneously for every $\epsilon>0$, disproving existence of an adaptive sampling algorithm. In fact, the crucial condition for Theorem \ref{adsamp} to work is that the nuclear norms $\|\theta\|_{S_1}$ are bounded by an absolute constant (here $=1$), which is violated by $\|E_i\|_{S_1} = \sqrt d$.

\subsection{Frobenius norm confidence sets based on unbiased risk estimation}

\subsubsection{An optimal confidence region for $n \le d^2$}

We suppose that we have two samples at hand, the first being used to construct an estimator $\tilde \theta$, such as the one from (\ref{cplan}). We freeze $\tilde \theta$ and the first sample in what follows and all probabilistic statements are under the distribution $\pp_\theta$ of the second sample $Y,X$ of size $n \in \mathbb N$, conditional on the value of $\tilde \theta$. We define the following residual sum of squares (RSS) statistic
\begin{equation} \label{RSS0}
\hat r_n = \frac{1}{n}\|Y-\mathcal X \tilde \theta\|^2 - \sigma^2,
\end{equation}
which satisfies $\ee _\theta \hat r_n = \|\theta-\tilde \theta\|_F^2$ in the model (\ref{model}) under Conditions \ref{design} and \ref{gausse} (see the proof of Theorem \ref{RSSthm} below). We assume for now that $\sigma$ is known, see Subsection \ref{novar} below for a discussion of the necessary modifications in the general case. Given $\alpha>0$, let $\xi_{\alpha, \sigma}$ be quantile constants such that 
\begin{equation} \label{quantile}
\Pr\left(\sum_{i=1}^n(\varepsilon_i^2-1)>\xi_{\alpha, \sigma} \sqrt n\right) = \alpha
\end{equation}
(these constants converge to the quantiles of a fixed normal distribution as $n \to \infty$), let $z_\alpha=\log(3/\alpha)$ and, for $z \ge 0$ a fixed constant to be chosen, define the confidence set
\begin{equation} \label{RSSconf}
C_n = \left\{v \in \hh_d(\cc): \|v-\tilde \theta\|_F^2 \le  2 \left(\hat r_n + z \frac{d}{n} + \frac{\bar z+\xi_{\alpha/3, \sigma}}{\sqrt n}\right)  \right\},
\end{equation}
where $$\bar z^2= \bar z^2(\alpha,d,n, \sigma, v) = z_{\alpha/3}\sigma^2 \max(3\|v-\tilde \theta\|^2_F, 4zd/n).$$ Note that in the `quantum shape constraint' case $\theta \in \Theta_+$ we can always upper bound $\|v-\tilde\theta\|_F \le 2$ in the definition of $\bar z$, which gives a confidence set that is easier to compute and of only marginally larger overall diameter. In some situations, however, the quantity $\bar z/\sqrt n$ is of smaller order than $1/\sqrt n$, and the more complicated expression above is generally preferable. 

%\smallskip

It is not difficult to see (using that $x^2 \lesssim y+x/\sqrt n$ implies $x^2 \lesssim y +1/n$) that the mean square Frobenius norm diameter of $C_n$ is of order
\begin{equation} \label{diameter}
\ee _\theta|C_n|^2_F\lesssim \|\tilde \theta -\theta\|_F^2 +  \frac{zd + z_{\alpha/3}}{n} + \frac{\xi_{\alpha/3, \sigma}}{\sqrt n}.
\end{equation}
Whenever $d \ge \sqrt n$ -- so as long as at most $n \le d^2$ measurements have been taken -- the deviation terms are of smaller order than $kd/n$ for any $k \ge 1$, and hence $C_n$ has minimax optimal expected squared diameter whenever the estimator $\tilde \theta$ is minimax optimal as in (\ref{cplan}). 

%\smallskip

The following result shows that $C_n$ is a valid confidence set for arbitrary Hermitian $d \times d$ matrices (without any rank constraint). Note that the result is non-asymptotic -- it holds for every $n \in \mathbb N$.

\begin{theorem} \label{RSSthm}
Let $\theta \in \hh_d(\cc)$ be arbitrary and let $\pp_\theta$ be the distribution of $Y,X$ from model (\ref{model}) under Condition \ref{gausse}.

%\smallskip

a) Assume the design satisfies Condition \ref{design}a) and let $C_n$ be given by (\ref{RSSconf}) with $z=0$. We then have for every $n \in \mathbb N$ that $$\pp_\theta(\theta \in C_n) \ge 1-\frac{2\alpha}{3}  - 2 e^{-c n}$$ where $c$ is a numerical constant. In the case of standard Gaussian design, $c=1/24$ is admissible.

%\smallskip

b) Assume the design satisfies Condition \ref{design}b) with constant $K>0$, let $C_n$ be given by (\ref{RSSconf}) with $z>0$ and assume also that $\theta \in \Theta_+$ and $\tilde \theta \in \Theta_+$ (that is, both satisfy the `quantum shape constraint'). Then for every $n \in \mathbb N$,
 $$\pp_\theta(\theta \in C_n) \ge 1-\frac{2\alpha}{3} -  2 e^{-C(K) z}$$ 
where $C(K) =1/[(16+8/3)K^2]$.
\end{theorem}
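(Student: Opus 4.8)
The plan is to verify that the frozen target $\theta$ satisfies the inequality defining $C_n$ at $v=\theta$. Write $h=\theta-\tilde\theta$, which is fixed given the first sample and hence independent of the second sample $(Y,X)$, and use $Y-\mathcal X\tilde\theta=\mathcal X h+\varepsilon$ to expand
\[
\hat r_n-\|h\|_F^2=\underbrace{\Big(\tfrac1n\|\mathcal X h\|^2-\|h\|_F^2\Big)}_{T_1}+\underbrace{\tfrac2n\langle \mathcal X h,\varepsilon\rangle}_{T_2}+\underbrace{\Big(\tfrac1n\|\varepsilon\|^2-\sigma^2\Big)}_{T_3}.
\]
Taking expectations and using the expected isometry (\ref{isoexp}) resp.\ (\ref{paulexp}), the independence of $\varepsilon$ and $X$, and $\ee\varepsilon_i^2=\sigma^2$ gives $\ee_\theta\hat r_n=\|h\|_F^2$, the unbiasedness statement quoted after (\ref{RSS0}). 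Substituting $\hat r_n=\|h\|_F^2+T_1+T_2+T_3$ into the defining inequality of $C_n$, membership $\theta\in C_n$ is equivalent to
\[
-(T_1+T_2+T_3)\le \tfrac12\|h\|_F^2+z\tfrac{d}{n}+\frac{\bar z+\xi_{\alpha/3,\sigma}}{\sqrt n},
\]
so it suffices to bound the lower deviations of $T_1,T_2,T_3$ and take a union bound.

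The term $T_1$ is the heart of the matter and the reason the construction sidesteps the RIP: since $\tilde\theta$ is unconstrained, $h$ need not be low rank and (\ref{RIP}) is unavailable, so I must control $\tfrac1n\|\mathcal X h\|^2=\tfrac1n\sum_i Z_i^2$ with $Z_i=tr(X^i h)$ for one fixed matrix $h$. Under Condition \ref{design}a) the $Z_i$ are i.i.d., sub-Gaussian, with $\ee Z_i^2=\|h\|_F^2$; hence $Z_i^2$ is sub-exponential and a Bernstein bound yields $-T_1\le \tfrac12\|h\|_F^2$ together with a matching upper deviation off an event of probability $\le 2e^{-cn}$ (the second tail being needed below to control the cross-term variance), the explicit $c=1/24$ following from a $\chi^2_1$ moment generating function computation in the standard Gaussian case. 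Under Condition \ref{design}b) I use boundedness instead: $X^i=dE_j$ with $\|E_j\|_{op}\le K/\sqrt d$, and the shape constraint gives $\|h\|_{S_1}\le\|\theta\|_{S_1}+\|\tilde\theta\|_{S_1}\le 2$, so $|Z_i|=d|\langle E_j,h\rangle_F|\le d\|E_j\|_{op}\|h\|_{S_1}\le 2K\sqrt d$ and $Z_i^2\le 4K^2 d$. Bernstein's inequality for these bounded summands (mean $\|h\|_F^2$, variance proxy $4K^2 d\|h\|_F^2$) then bounds $-T_1$ by $\tfrac12\|h\|_F^2+zd/n$ off an event of probability $\le 2e^{-C(K)z}$, and it is precisely this boundedness step that produces the factor $d$ in the correction $zd/n$ and the constant $C(K)=1/[(16+8/3)K^2]$.

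For the cross term, conditionally on $X$ the variable $-T_2=-\tfrac2n\sum_i Z_i\varepsilon_i$ is centred Gaussian with variance $\tfrac{4\sigma^2}{n^2}\|\mathcal X h\|^2$; on the event just constructed, where $\tfrac1n\|\mathcal X h\|^2$ is comparable to $\max(\|h\|_F^2,zd/n)$, the Gaussian tail bound gives $-T_2\le \bar z/\sqrt n$ with conditional probability $\ge 1-\alpha/3$, the definition $\bar z^2=z_{\alpha/3}\sigma^2\max(3\|h\|_F^2,4zd/n)$ being chosen so that $\bar z^2/n$ dominates the relevant multiple of this conditional variance. For the noise term, $-T_3=\sigma^2-\tfrac1n\|\varepsilon\|^2$ is a lower deviation of a centred chi-square sum, whose left tail is lighter than the right tail by skewness, so the quantile $\xi_{\alpha/3,\sigma}$ from (\ref{quantile}) yields $-T_3\le \xi_{\alpha/3,\sigma}/\sqrt n$ with probability $\ge 1-\alpha/3$. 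Residual $\|h\|_F^2$-proportional terms left over from the crude cross-term bound are absorbed into the $\tfrac12\|h\|_F^2$ budget via AM-GM, which is exactly what the factor $2$ in $C_n$ supplies. A union bound over the three events delivers the displayed inequality, hence $\theta\in C_n$, with probability at least $1-2\alpha/3-2e^{-cn}$ in part a) (where $z=0$) and $1-2\alpha/3-2e^{-C(K)z}$ in part b).

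The principal obstacle is conceptual rather than computational: because $h=\theta-\tilde\theta$ is not low rank, everything must run through fixed-$h$ concentration, and in the Pauli model this closes only because the shape constraint bounds $\|h\|_{S_1}$, and hence $|Z_i|$, by $2K\sqrt d$; without it (e.g.\ $\theta=E_i$, where $\|E_i\|_{S_1}=\sqrt d$) the $zd/n$ scaling is lost, in line with the impossibility remark after Theorem \ref{adsamp}. The remaining work --- the joint calibration of the constants in $\bar z$, the factor $2$, and the Bernstein denominator so that the three lower-deviation bounds sum to exactly the budget and produce the stated $C(K)$ --- is routine but must be tracked with care.
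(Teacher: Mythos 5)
Your proposal is correct and takes essentially the same route as the paper's proof: the identical three-term decomposition of $\hat r_n-\|h\|_F^2$, the same fixed-matrix Bernstein concentration for the design term (sub-exponential in the isotropic case; bounded summands with $|Z_i|^2\le 4K^2d$ via $\|h\|_{S_1}\le 2$ in the Pauli case, which is exactly the paper's Lemma on the event $\mathcal E$), and the same conditional-Gaussian tail bound for the cross term calibrated by the definition of $\bar z$, with the noise term handled by the quantile $\xi_{\alpha/3,\sigma}$ and a union bound. The only cosmetic difference is your mention of AM-GM for residual terms, which the paper avoids by giving each of the three deviations its own additive slice of the budget.
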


%\smallskip

In Part a), if we want to control the coverage probability at level $1-\alpha$, $n$ needs to be large enough so that the third deviation term is controlled at level $\alpha/3$. In the Gaussian design case with $\alpha=0.05$, $n \ge 100$ is sufficient, for smaller sample sizes one can use the confidence region from the next subsection. The bound in b) is entirely non-asymptotic for suitable choices of $z$.  Also note that the quantile constants $z, z_\alpha, \xi_\alpha$ all scale at least as $O(\log(1/\alpha))$ in the desired coverage level $\alpha \to 0$.

As mentioned above, the confidence set from Theorem \ref{RSSthm} is optimal whenever the desired performance of $\|\theta-\tilde \theta\|_F^2$ is no better than of order $1/\sqrt n$, corresponding to the important regime $n \le d^2$ for sequential sampling algorithms. Refinements for measurement scales $n\ge d^2$ are also of interest - we present two optimal approaches in the next two subsections for the designs from Condition \ref{design}. 

\subsubsection{Isotropic design and a confidence set based on $U$-statistics} \label{ustats}

Consider isotropic i.i.d~design from Condition \ref{design}a), and an estimator $\tilde \theta$ based on an initial sample of size $n$ (all statements that follow are conditional on that sample) . Collect another $n$ samples to perform the uncertainty quantification step. Define the $U$-statistic
\begin{equation} \label{U0}
\hat R_n = \frac{2}{n(n-1)} \sum_{i<j} \sum_{m,k} (Y_i X^i_{m,k}-\tilde \theta_{m,k}) (Y_j X^j_{m,k}-\tilde \theta_{m,k})
\end{equation}
whose $\ee _\theta$-expectation, conditional on $\tilde \theta$, equals $\|\theta-\tilde\theta\|_F^2$ in view of
$$\ee Y_i X_{m,k}^i = \ee  \sum_{m',k'} X^i_{m',k'}X^i_{m,k} \theta_{m',k'} = \theta_{m,k}.$$ 
Define
\begin{equation} \label{Uconf}
C_n = \left\{v \in \hh_d(\rr): \|v - \tilde \theta\|^2_F \le \hat R_n + z_{\alpha,n}  \right\}
\end{equation}
where $$z_{\alpha, n} = \frac{C_1  \|\theta-\tilde \theta\|_F}{\sqrt n} + \frac{C_2 d}{n} $$ and $C_1 \ge \zeta_1 \|\theta\|_F,~C_2 \ge \zeta_2 \|\theta\|_F^2$ with $\zeta_i$ constants depending on $\alpha$ and the upper bound $v$ for $\sigma$ from Condition \ref{gausse}. Note that if $\theta \in \Theta_+$ then $\|\theta\|_F \le 1$ can be used as an upper bound in $C_i, i=1,2$. In practice the constants $\zeta_i$ can be calibrated by Monte Carlo simulations (see the implementation section below), or chosen based on concentration inequalities for $U$-statistics 
(see ref. \cite{GN15}, Theorem 4.4.8). This confidence set has expected diameter
$$\ee _\theta |C_n|^2_F \lesssim \|\tilde \theta- \theta\|_F^2 +\frac{C_1+C_2d}{n},$$ and hence is compatible with any minimax recovery rate $\|\tilde \theta - \theta\|_F^2 \lesssim kd/n$ from (\ref{cplan}), where $k \ge 1$ is now arbitrary. For suitable choices of $\zeta_i$ we now show that $C_n$ also has non-asymptotic coverage.

\begin{theorem}\label{ustatkill}
Assume Conditions \ref{design}a) and \ref{gausse}, and let $C_n$ be as in (\ref{Uconf}). For every $\alpha>0$ we can choose $\zeta_i(\alpha)=O(\sqrt{1/\alpha}), i=1,2,$ large enough so that for every $n \in \mathbb N$ we have $$\pp_\theta (\theta \in C_n) \ge 1-\alpha.$$ 
\end{theorem}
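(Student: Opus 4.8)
The plan is to establish coverage by showing that the event $\{\theta \notin C_n\}$ forces the $U$-statistic $\hat R_n$ to deviate below its mean $\|\theta - \tilde\theta\|_F^2$ by more than $z_{\alpha,n}$, and then to control this lower deviation via a concentration inequality for $U$-statistics. First I would write $\theta \in C_n$ explicitly as the event $\|\theta - \tilde\theta\|_F^2 \le \hat R_n + z_{\alpha,n}$, equivalently $\ee_\theta \hat R_n - \hat R_n \le z_{\alpha,n}$, using the unbiasedness $\ee_\theta \hat R_n = \|\theta - \tilde\theta\|_F^2$ already recorded after (\ref{U0}). Thus the failure probability is $\pp_\theta(\ee_\theta \hat R_n - \hat R_n > z_{\alpha,n})$, a one-sided lower-tail bound for the degenerate-or-non-degenerate $U$-statistic $\hat R_n$.

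The key step is to decompose $\hat R_n$ into its Hoeffding (Hájek) components. Writing $h_{ij}$ for the symmetric kernel $\sum_{m,k}(Y_iX^i_{m,k} - \tilde\theta_{m,k})(Y_jX^j_{m,k} - \tilde\theta_{m,k})$, I would split $\hat R_n - \ee_\theta\hat R_n$ into a linear (projection) part and a degenerate (canonical) part. The linear part is a sum of i.i.d.\ mean-zero terms $g(W_i) = \ee_\theta[h_{ij}\mid W_i] - \ee_\theta\hat R_n$, whose fluctuations scale like $\|\theta-\tilde\theta\|_F/\sqrt n$ times a factor controlled by $\|\theta\|_F$ and the sub-Gaussian constants; this is what the $C_1\|\theta-\tilde\theta\|_F/\sqrt n$ term in $z_{\alpha,n}$ is designed to absorb. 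The degenerate part is a genuine second-order $U$-statistic whose variance is of order $d/n^2$ (each kernel factor carries a trace over $d^2$ entries, and the expected isometry (\ref{isoexp}) converts these sums into Frobenius norms), giving fluctuations of order $\sqrt d/n$; this is matched by the $C_2 d/n$ term. For the quantitative tail control I would invoke the concentration inequality for $U$-statistics of order two from \cite{GN15}, Theorem 4.4.8, applied to each component, computing the relevant variance and moment proxies from the sub-Gaussianity of $Z_i = tr(X^i\theta)$ in Condition \ref{design}a) together with the Gaussian errors of Condition \ref{gausse}. Choosing $\zeta_1, \zeta_2$ proportional to $\sqrt{1/\alpha}$ then makes each tail contribution at most $\alpha/2$, which is the $O(\sqrt{1/\alpha})$ scaling asserted.

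The main obstacle I expect is the variance and moment bookkeeping for the degenerate component: one must verify that the second-order variance is genuinely $O(d/n^2)$ rather than $O(d^2/n^2)$, which is exactly the point where the isotropy and the $\ee X^2_{m,k}=1$ normalisation are essential, and that the higher moments feeding into the exponential inequality do not introduce extra dimensional factors beyond the allowed $d$. A secondary subtlety is that $z_{\alpha,n}$ contains the \emph{unknown} quantity $\|\theta-\tilde\theta\|_F$, so strictly the argument bounds the probability that the true $\|\theta-\tilde\theta\|_F^2$ exceeds $\hat R_n + z_{\alpha,n}$ with this oracle radius, and one must check (as the companion diameter bound after (\ref{Uconf}) implicitly uses) that substituting the achievable radius keeps the self-referential definition of $C_n$ well-posed. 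Both issues are handled by tracking constants carefully rather than by any new idea, so the heart of the proof is the clean Hoeffding decomposition and the dimension-correct variance computation.
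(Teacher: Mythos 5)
Your proposal follows the same skeleton as the paper's proof: use the unbiasedness $\ee_\theta \hat R_n = \|\theta-\tilde\theta\|_F^2$ to reduce coverage to a lower-tail bound for $\hat R_n - \ee_\theta \hat R_n$, perform the Hoeffding decomposition into a linear and a degenerate part, and match their fluctuations to the two terms $C_1\|\theta-\tilde\theta\|_F/\sqrt n$ and $C_2 d/n$ of $z_{\alpha,n}$. The one substantive divergence is the tail inequality. The paper does \emph{not} invoke the exponential $U$-statistic concentration inequality (Theorem 4.4.8 of \cite{GN15}) in the proof -- that reference appears only as an optional way to calibrate constants in practice. Instead it applies plain Chebyshev to ${\rm Var}_\theta(\hat R_n)/z_{\alpha,n}^2$, bounding ${\rm Var}_\theta(L_n^{(2)}) = \sigma^2\|\tilde\theta-\theta\|_F^2/n$, ${\rm Var}_\theta(L_n^{(1)}) \lesssim \|\theta-\tilde\theta\|_F^2\|\theta\|_F^2/n$ (via the computations cited from \cite{NvdG13}) and the degenerate variance via \cite{ITV10} with $p=d^2$. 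This second-moment route is exactly what produces the advertised $\zeta_i=O(\sqrt{1/\alpha})$ scaling, and it sidesteps a real obstacle in your plan: the kernel here is a product of unbounded (sub-Gaussian times Gaussian) factors, so the bounded-kernel exponential inequality of \cite{GN15} would require a truncation argument before it applies, and would buy only a $\log(1/\alpha)$ dependence that the theorem does not claim. Your worry about whether the degenerate variance is $O(d/n^2)$ versus $O(d^2/n^2)$ is also moot for the argument: since the deviation term in $z_{\alpha,n}$ is $C_2 d/n$, whose square is $d^2/n^2$, Chebyshev closes even with the weaker $O(d^2/n^2)$ bound (the natural one given $p=d^2$ coordinates). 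Finally, your observation that $z_{\alpha,n}$ involves the oracle quantity $\|\theta-\tilde\theta\|_F$ is correct but is not an issue for coverage; the paper handles the resulting implicit inequality only when bounding the diameter (via $x^2\lesssim y+x/\sqrt n \Rightarrow x^2 \lesssim y + 1/n$).
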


\subsubsection{Pauli design: Re-averaging basis elements when $n \ge d^2$} \label{reav}

For the design from Condition \ref{design}b) where we sample uniformly at random from a (scaled) basis $\{dE_1, \dots, dE_{d^2}\}$ of $\mm_d(\cc)$, the $U$-statistic approach from Theorem \ref{ustatkill} appears not to be viable, and thus for $d \le \sqrt n$ the existence of an optimal confidence region still needs to be ensured. When $d \le \sqrt n$ we are taking $n \ge d^2$ measurements, and there is no need to sample at \textit{random} from the basis as we can measure each individual coefficient, possibly even multiple times. Repeatedly sampling a basis coefficient $tr(E_k \theta)$ leads to a reduction of the variance of the measurement by averaging. More precisely, when taking $n = md^2$ measurements for some (for simplicity integer) $m\ge 1$, and if $(Y_{k,l}: l =1, \dots, m)$ are the measurements $Y_i$ corresponding to the basis element $E_k, k \in \{1, \dots, d^2\}$, we can form averaged measurements $$Z_k = \frac{1}{\sqrt m} \sum_{l=1}^m Y_{k,l} = \sqrt m d\langle E_{k}, \theta \rangle_F + \epsilon_k, ~~\epsilon_k = \frac{1}{\sqrt m} \sum_{l=1}^{m}\varepsilon_l \sim N(0,\sigma^2).$$ We can then define the new measurement vector $\tilde Z = (\tilde Z_1, \dots, \tilde Z_{d^2})^T$ (using also $m=n/d^2$) 
$$\tilde Z_k = Z_k - \sqrt n\langle \tilde \theta, E_k \rangle =  \sqrt n \langle E_k, \theta - \tilde \theta \rangle_F + \epsilon_k, ~~k =1, \dots, d^2$$ and the statistic
\begin{equation}\label{RSS2}
\hat R_n = \frac{1}{n}\|\tilde Z\|_{\rr^{d^2}}^2 - \frac{\sigma^2d^2}{n}
\end{equation}
which estimates $\|\theta - \tilde\theta\|_F^2$ with precision
\begin{align*}
\hat R_n - \|\theta-\tilde \theta\|_F^2 & = \frac{2}{\sqrt n} \sum_{k=1}^{d^2}\epsilon_k \langle E_k, \theta - \tilde \theta \rangle_F + \frac{1}{n}\sum_{k=1}^{d^2}(\epsilon^2_k-\ee \epsilon^2) \\
&= O_P\left(\frac{\sigma\|\theta-\tilde \theta\|_F}{\sqrt n} + \frac{\sigma^2 d}{n}\right) \notag.
\end{align*}
Hence, for $z_{\alpha}$ the quantiles of a $N(0,1)$ distribution and $\xi_{\alpha, \sigma}$ as in (\ref{quantile}) with $d^2$ replacing $n$ there, we can define a confidence set
\begin{equation} \label{liconf}
\bar C_n = \left\{v \in \hh_d(\cc): \|v - \tilde \theta\|_F^2 \le \hat R_n + \frac{z_{\alpha/2} \sigma \|\theta -\tilde \theta\|_F}{\sqrt n} + \frac{\xi_{\alpha/2, \sigma} d}{n} \right\}
\end{equation}
which has non-asymptotic coverage
$$\pp_\theta (\theta \in \bar C_n) \ge 1- \alpha$$
for every $n \in \mathbb N$, by similar (in fact, since Lemma \ref{bernstein} is not needed, simpler) arguments as in the proof of Theorem \ref{RSSthm} below. The expected diameter of $\bar C_n$ is by construction 
\begin{equation}  \label{diameter2}
\ee _\theta |\bar C_n|^2_F \lesssim \|\theta - \tilde \theta\|_F^2 + \frac{\sigma^2d}{n},
\end{equation}
now compatible with \textit{any} rate of recovery $kd/n, 1\le k \le d$. The case of unknown variance is discussed in the next subsection.

\subsubsection{Unknown variance} \label{novar}

The $U$-statistic based confidence set from (\ref{Uconf}) does not require knowledge of $\sigma$ but works only for the design from Condition \ref{design}a). For Pauli design from Condition \ref{design}b) we can use the confidence sets $C_n$ in Theorem \ref{RSSthm} or $\bar C_n$ in (\ref{liconf}), but these do require exact knowledge of the noise variance $\sigma^2$. As described before (\ref{bernbds}) above, in the Pauli case $\sigma^2$ can be apriori bounded by $d/T$, where $T$ is the number of preparations used to measure each individual Pauli observable. If $T \ge n$ then the statistics $\hat r_n$ and $\hat R_n$ from (\ref{RSS0}) and (\ref{RSS2}) above can be used without subtracting $\sigma^2$ and $\sigma^2d^2/n$, respectively, in their definitions. The coverage proofs then go through with minor modifications simply by noting that these centerings are of sufficiently small order of magnitude $\sigma^2 \le d/T \le d/n$ and $\sigma^2 d^2/n \le d^3/Tn \le d/n$ compared to the minimax rate of estimation, and by using the upper bound $\sigma^2 \le v= d/T $ in all relevant constants featuring in the definition of $C_n, \bar C_n$. 

Typically preparing $T \ge n$ measurements of a fixed Pauli observable is not a major problem in experimental situations. If for some reason this cannot be done, one can make sure that each $tr(E_i\theta)$ is at least measured twice (so $T \ge 2$), say in batches $Y_1, \dots, Y_{n/2}$ and $Y_{n/2+1}, \dots, Y_n$, and then use the modified statistic $$\tilde r_n = \frac{2}{n} \sum_{i=1}^{n/2} (Y_i - \langle X^i, \tilde \theta \rangle_F) (Y_{i+n/2} - \langle X^i, \tilde \theta \rangle_F)$$ in the construction of the confidence set. Arguments similar to above, using concentration inequalities for Gaussian chaos of order two (Theorem 3.1.9 in \cite{GN15}) then allow for the construction of a confidence region that does neither require knowledge of $\sigma^2 \le v$ nor $T \ge n$. Details are omitted.

%\begin{remark} [Bernoulli noise] \label{bernmodif} \normalfont It is not difficult to see that Theorem \ref{RSSthm}b) holds as well for the Bernoulli measurement model from Subsection \ref{bernoulli} with $T \ge d^2$, with slightly different constants in the construction of $C_n$ and the coverage probabilities, see Remark \ref{bernprf} after the proof of Theorem \ref{RSSthm}b) below for details. A similar remark applies to the coverage proof for the confidence region described in Subsection \ref{reav}. The modified quantile constants $z, z_\alpha, \xi_\alpha$ still scale as $O(\sqrt{1/\alpha})$ in the desired coverage level $\alpha \to 0$, and hence the adaptive sampling Theorem \ref{adsamp} holds for such noise too, if the number $T$ of preparations of the quantum state exceeds $d^2$. \end{remark}

\subsection{A confidence set in trace norm under quantum shape constraints}

The confidence sets from the previous subsections are all valid in the sense that they contain information about the recovery of $\theta$ by $\tilde \theta$ in Frobenius norm $\|\cdot\|_F$. It is of interest to obtain results in stronger norms, such as for instance the nuclear norm $\|\cdot\|_{S_1}$, which is particularly meaningful for quantum tomography problems since it then corresponds to the total variation distance on the set of `probability density matrices'. The absence of the `Hilbert space geometry' induced by the relationship of the Frobenius norm to the inner product $\langle \cdot, \cdot \rangle_F$ makes this problem significantly harder, both technically and from an information-theoretic point of view. In particular the quantum shape constraint $\theta \in \Theta_+$ is crucial to obtain any results whatsoever. For the theoretical results presented here it will be more convenient to perform an asymptotic analysis where $\min(n,d) \to \infty$ (with $o,O$-notation to be understood accordingly).

%\medskip

Instead of Condition \ref{design} we now consider more generally any design $(X^i, i=1, \dots, n)$ in model (\ref{model}) that satisfies the matrix RIP (\ref{RIP}) with
\begin{equation} \label{MRIP}
\tau_n (k) = c \sqrt{kd \frac{\overline{\log} (d)}{n}}.
\end{equation}
We shall still use the convention discussed before Condition \ref{design} that $\theta$ and the matrices $X^i$ are such that $tr(X^i\theta)$ is always real-valued. 

In contrast to the results from the previous section we shall now assume a minimal low rank constraint on the parameter space:
\begin{condition} \label{consistent} $\theta \in R^+(k) := R(k) \cap \Theta_+$ for some $k$ satisfying
$$k\sqrt{\frac{d \overline{\log} d}{n}} = o(1),$$
\end{condition}
This in particular implies that the RIP holds with $\tau_n(k) = o(1)$.  Given this minimal rank constraint $\theta  \in R^+(k)$, we now show that it is possible to construct a confidence set $C_n$ that adapts to any low rank $1 \le k_0<k$. Here we may choose $k=d$ but note that this forces $n \gg d^2$ (for Condition \ref{consistent} to hold with $k=d$). 

%\smallskip

We assume that there exists an estimator $\tilde \theta_{\rm Pilot}$ that satisfies, uniformly in $R(k_0)$ for any $k_0 \le k$ and for $n$ large enough,
\begin{equation} \label{risk}
\|\tilde \theta_{\rm Pilot} - \theta\|_F^2 \leq D \sigma^2 \frac{k_0 d }{n} := \frac{r^2_n(k_0)}{4}
\end{equation}
where $D=D(\delta)$ depends on $\delta$, and where so-defined $r_n$ will be used frequently below. Such estimators exist as has already been discussed before (\ref{cplan}). We shall in fact require a little more, namely the following oracle inequality: for any $k$ and any matrix $S$ of rank $k\le d$, with high probability and for $n$ large enough,
\begin{equation} \label{oracle}
\|\tilde \theta_{\rm Pilot} - \theta\|_F \lesssim \|\theta - S\|_F + r_n(k),
\end{equation}
which implies (\ref{risk}). Such inequalities exist assuming the RIP and Condition \ref{consistent}, see, e.g., Theorem 2.8 in ref.  \cite{CP11}. Starting from $\tilde \theta_{\rm Pilot}$ one can construct (see Theorem \ref{estcond} below) an estimator that recovers $\theta \in R(k)$ in nuclear norm at rate $k \sqrt {d/n}$, which is again optimal from a minimax point of view, even under the quantum constraint (as discussed, e.g., in ref. \cite{K11}). We now construct an adaptive confidence set for $\theta$ centred at a suitable projection of $\tilde \theta_{\rm Pilot}$ onto $\Theta_+$.

%\medskip

In the proof of Theorem \ref{main} below we will construct estimated eigenvalues $(\hat \lambda_j, j=1, \dots, d)$ of $\theta$ (see after Lemma \ref{pcaell1}). Given those eigenvalues and $\tilde \theta_{\rm Pilot}$, we choose $\hat k$ to equal the smallest integer $\le d$ such that there exists a rank $\hat k$ matrix $\tilde \theta'$ for which $$\|\tilde \theta' - \tilde \theta_{\rm Pilot}\|_F \le r_n(\hat k) \text{ and } 1 - \sum_{J \le \hat k} \hat \lambda_J \leq 2 \hat k \sqrt{d/n}$$ is satisfied. Such $\hat k$ exists with high probability (since the inequalities are satisfied for the true $\theta$ and $\lambda_j$'s, as our proofs imply).  Define next $\hat \vartheta$ to be the $\langle \cdot, \cdot \rangle_F$-projection of $\tilde \theta_{\rm Pilot}$ onto $$R^+(2\hat k) 
:= R(2 \hat k) \cap \Theta_+$$ and note that, since $2 \hat k \ge \hat k$, 
\begin{equation} \label{projk}
\|\tilde \theta_{\rm Pilot} - \hat \vartheta\|_F =\|\tilde \theta_{\rm Pilot} - R^+(2\hat k)\|_F \le \|\tilde \theta_{\rm Pilot} - \tilde \theta'\|_F \le r_n(\hat k).
\end{equation}
Finally define, for $C$ a constant chosen below,
\begin{equation}
C_n = \left\{v \in \Theta_+ : \|v - \hat \vartheta\|_{S_1} \le C \sqrt{\hat k} r_n(\hat k)  \right\}.
\end{equation}
\begin{theorem}\label{main}
Assume Condition \ref{consistent} for some $1 \le k \le d$, and let $\delta>0$ be given. Assume that with probability greater than $1-2\delta/3$, a) the RIP (\ref{RIP}) holds with $\tau_n(k)$ as in (\ref{MRIP}) and b) there exists an estimator $\tilde \theta_{\rm Pilot}$ for which (\ref{oracle}) holds. Then we can choose $C=C(\delta)$ large enough so that, for $C_n$ as in the last display, $$\liminf_{\min (n,d) \to \infty} \inf_{\theta \in R^+(k)}\pp_\theta (\theta \in C_n) \ge 1-\delta.$$ Moreover, uniformly in $R^+(k_0), 1 \le k_0 \le k,$ and with $\pp_\theta$-probability greater than $1-\delta$, $$|C_n|_{S_1} \lesssim \sqrt {k_0} r_n(k_0).$$  
\end{theorem}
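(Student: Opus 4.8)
The plan is to establish coverage and the diameter bound on a single event of probability at least $1-\delta$ on which three things hold: (a) the RIP (\ref{RIP}) with $\tau_n(k)$ as in (\ref{MRIP}); (b) the pilot oracle inequality (\ref{oracle}); and (c) the estimated eigenvalues $\hat\lambda_j$ furnished by Lemma \ref{pcaell1} approximate the true eigenvalues $\lambda_j$ of $\theta$ well enough that the defining inequality (ii) for $\hat k$ upgrades to the true-spectrum tail bound $\sum_{j>\hat k}\lambda_j\lesssim \hat k\sqrt{d/n}$. Events (a),(b) cost $2\delta/3$ by hypothesis, leaving $\delta/3$ for (c). Throughout I use $r_n(k)\asymp\sqrt{kd/n}$, so that $\sqrt{k}\,r_n(k)\asymp k\sqrt{d/n}$ and $k\mapsto\sqrt{k}\,r_n(k)$ is increasing.

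The geometric heart of the argument is a reduction of $\|\theta-\hat\vartheta\|_{S_1}$ to $\|\theta-\hat\vartheta\|_F$ that exploits positivity of $\theta$ and the low rank of $\hat\vartheta$. Let $P$ be the orthogonal projection onto the range of $\hat\vartheta$, so $\mathrm{rank}(P)\le 2\hat k$ and $P^\perp\hat\vartheta=0$. Writing $A=\theta-\hat\vartheta$ and splitting $A=(A-P^\perp AP^\perp)+P^\perp AP^\perp$, the off-support block is $P^\perp AP^\perp=P^\perp\theta P^\perp\succeq 0$, so $\|P^\perp\theta P^\perp\|_{S_1}=\mathrm{tr}(\theta P^\perp)$. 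Since $\mathrm{tr}(A)=\mathrm{tr}(\theta)-\mathrm{tr}(\hat\vartheta)=0$ and $\hat\vartheta P^\perp=0$, one computes $\mathrm{tr}(\theta P^\perp)=\mathrm{tr}\big(P(\hat\vartheta-\theta)P\big)\le\sqrt{2\hat k}\,\|\theta-\hat\vartheta\|_F$, using $\mathrm{rank}\big(P(\hat\vartheta-\theta)P\big)\le 2\hat k$ with (\ref{l1l2}). The on-support block $A-P^\perp AP^\perp$ has rank at most $4\hat k$ and Frobenius norm at most $\|A\|_F$, so (\ref{l1l2}) bounds its nuclear norm by $2\sqrt{\hat k}\,\|\theta-\hat\vartheta\|_F$. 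Adding the pieces gives the clean inequality
\begin{equation*}
\|\theta-\hat\vartheta\|_{S_1}\le(2+\sqrt{2})\sqrt{\hat k}\,\|\theta-\hat\vartheta\|_F ,
\end{equation*}
with no Davis--Kahan eigenvector control needed: positivity converts the potentially high-rank tail into a trace that is itself dominated by $\sqrt{\hat k}\,\|\theta-\hat\vartheta\|_F$.

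It then suffices to show $\|\theta-\hat\vartheta\|_F\lesssim r_n(\hat k)$. By (\ref{projk}) we already have $\|\tilde\theta_{\rm Pilot}-\hat\vartheta\|_F\le r_n(\hat k)$, so by the triangle inequality the task reduces to the pilot bound $\|\theta-\tilde\theta_{\rm Pilot}\|_F\lesssim r_n(\hat k)$. Here both defining properties of $\hat k$ enter: via (c), condition (ii) controls the nuclear tail $\|\theta-\theta_{\hat k}\|_{S_1}=\sum_{j>\hat k}\lambda_j\lesssim\hat k\sqrt{d/n}$ of the top-$\hat k$ spectral truncation $\theta_{\hat k}$, while condition (i) certifies that $\tilde\theta_{\rm Pilot}$ lies within $r_n(\hat k)$ of a rank-$\hat k$ matrix. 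Feeding that rank-$\hat k$ approximant into the oracle inequality (\ref{oracle}) is then intended to yield $\|\theta-\tilde\theta_{\rm Pilot}\|_F\lesssim r_n(\hat k)$; combined with the displayed reduction this gives $\|\theta-\hat\vartheta\|_{S_1}\lesssim\sqrt{\hat k}\,r_n(\hat k)\le C\sqrt{\hat k}\,r_n(\hat k)$ for $C=C(\delta)$ large, which is coverage. For the diameter, $C_n$ is by construction a nuclear-norm ball of radius $C\sqrt{\hat k}\,r_n(\hat k)$, so $|C_n|_{S_1}\le 2C\sqrt{\hat k}\,r_n(\hat k)$; since on the good event the pair (i)--(ii) is already met at the true rank $k_0$ (as the construction records), one has $\hat k\le k_0$ whenever $\theta\in R^+(k_0)$, and monotonicity of $k\mapsto\sqrt{k}\,r_n(k)$ gives $|C_n|_{S_1}\lesssim\sqrt{k_0}\,r_n(k_0)$.

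The main obstacle is exactly the pilot bound $\|\theta-\tilde\theta_{\rm Pilot}\|_F\lesssim r_n(\hat k)$, equivalently the control of the \emph{Frobenius} tail $\|\theta-\theta_{\hat k}\|_F$ at scale $r_n(\hat k)$ rather than $\sqrt{\hat k}\,r_n(\hat k)$. Routing through the nuclear tail loses a factor $\sqrt{\hat k}$ (since $\|\cdot\|_F\le\|\cdot\|_{S_1}$ only gives $\|\theta-\theta_{\hat k}\|_F\lesssim\hat k\sqrt{d/n}$), and inserting that into the reduction would produce the suboptimal radius $\hat k\,r_n(\hat k)$; meanwhile a direct bootstrap of (\ref{oracle}) is circular, its leading constant exceeding one. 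The resolution is that condition (i) is a statement about the \emph{data} $\tilde\theta_{\rm Pilot}$: it guarantees that the pilot is genuinely $r_n(\hat k)$-close to rank $\hat k$ in Frobenius norm, thereby pinning $\hat k$ to the effective Frobenius rank of $\theta$ rather than to some artificially small index permitted by the nuclear tail alone. Making this quantitative—via Weyl's inequality relating the singular values of $\tilde\theta_{\rm Pilot}$ to the eigenvalues of $\theta$, the eigenvalue estimates from Lemma \ref{pcaell1}, and verification that $\hat k$ is well defined with $\hat k\le k_0$—is the crux, and it is precisely here that the quantum shape constraint $\theta\in\Theta_+$ (hence $\mathrm{tr}\,\theta=1$, so that tail eigenvalue mass is a probability deficit) is indispensable.
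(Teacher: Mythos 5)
Your reduction of $\|\theta-\hat\vartheta\|_{S_1}$ to $(2+\sqrt2)\sqrt{\hat k}\,\|\theta-\hat\vartheta\|_F$ via the support projection $P$ of $\hat\vartheta$ is correct, and it is a genuinely different route from the paper's: the paper instead splits off $\Pi\theta$ (the projection of $\theta$ onto $R^+(2\hat k)$) and bounds the nuclear tail $\sum_{J>2\hat k}\lambda_J$ through Lemma \ref{lem:mati2} and the estimated eigenvalues, whereas your identity $tr(\theta P^\perp)=tr(P(\hat\vartheta-\theta)P)\le\sqrt{2\hat k}\,\|\theta-\hat\vartheta\|_F$ handles that tail purely by positivity and the trace constraint. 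However, both routes funnel into the same bottleneck, and you have left it open --- you say so yourself. Two essential ingredients are missing. First, the eigenvalue estimates $\hat\lambda_j$ are not ``furnished by Lemma \ref{pcaell1}''; that lemma is a deterministic min--max fact. They come from an auxiliary estimator $\hat\theta'=\tilde\theta+n^{-1}\sum_iX^iY_i'$ whose quadratic forms $U^T\hat\theta'U$ must be shown to approximate $U^T\theta U$ \emph{uniformly over the unit sphere} up to $v_n=O(r_n(k)\tau_n(k)+\sqrt{d/n})$. This requires the RIP-based bilinear bound of Lemma \ref{pythagoras} for the design part and a Dudley entropy-integral bound for the Gaussian process $U\mapsto n^{-1/2}\langle\mathcal X(UU^T),\varepsilon\rangle$ for the noise part, yielding $\bigl|\sum_{l\le j}\hat\lambda_l-\sum_{l\le j}\lambda_l\bigr|\le 2jv_n$. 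Without this, your event (c), the well-definedness of $\hat k$, and the claim $\hat k\le k_0$ used for the diameter bound are all unsupported.

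Second, the step you call ``the crux'' is exactly where the paper does real work, and your proposed resolution (Weyl's inequality plus condition (i) on the pilot) runs back into the circularity you yourself diagnose. The paper's resolution is elementary but decisive: from the eigenvalue estimates and the definition of $\hat k$ one gets $\sum_{l>\hat k}\lambda_l\le 4v_n\hat k$; since the eigenvalues are ordered, at most $\hat k$ of the $\lambda_l$ with $l>\hat k$ can exceed $4v_n$, hence $\lambda_l\le 4v_n$ for all $l>2\hat k$. Therefore $\|\theta-\Pi\theta\|_F^2=\sum_{l>2\hat k}\lambda_l^2\le\bigl(\max_{l>2\hat k}\lambda_l\bigr)\sum_{l>2\hat k}\lambda_l\lesssim v_n^2\hat k\lesssim r_n^2(\hat k)$ --- this ``max times sum'' step, and the deliberate doubling of the rank from $\hat k$ to $2\hat k$, is what upgrades the nuclear tail bound to a \emph{Frobenius} tail bound at scale $r_n(\hat k)$ rather than $\sqrt{\hat k}\,r_n(\hat k)$. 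Feeding $S=\Pi\theta$ into the oracle inequality (\ref{oracle}) then gives $\|\tilde\theta_{\rm Pilot}-\theta\|_F\lesssim r_n(\hat k)$ without circularity, after which your decomposition closes the proof. As written, your proposal supplies a correct alternative for the routine part of the argument and an accurate diagnosis of the difficulty, but not a proof of the theorem.
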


%\smallskip

% Note that for any $\delta$ the requirements a) and b) can be met by choosing the constants in (\ref{MRIP}) and (\ref{oracle}) large enough depending only on $\delta$ and the maximal rank $k$. Hence $C_n$ is a genuine confidence set (in that it can be computed from the data alone in principle). 

%\smallskip

Theorem \ref{main} shows how the quantum shape constraint allows for the construction of an optimal nuclear norm confidence set that adapts to the unknown low rank structure. A careful study of certain hypothesis testing problems (combined with lower bound techniques for confidence sets as in \cite{HN11, NvdG13}) shows that the assumption $\theta \in R^+(k)$ in the above theorem is actually necessary, and cannot be relaxed to $\theta \in R(k)$. See \cite{CN15}, Theorem 4.

\subsection{Conclusions}

We have constructed adaptive confidence regions for matrix parameters $\theta$ in the trace regression model (\ref{model}). These confidence regions contract at the minimax optimal rates for low rank parameters, either in Frobenius or nuclear norm, and are `honest' (in the sense of \cite{L89}, see also \cite{RV06, HN11}). The conditions employed are naturally compatible with quantum tomography applications - where $\theta$ is the 
density matrix of a quantum state, and where the noise variance has an a priori upper bound that can be controlled experimentally. This in turn can be used to demonstrate the existence of fully adaptive sequential sampling protocols that generate valid certificates for the recovery of unknown low rank quantum states.

%\smallskip

While it can be shown on the one hand (see Theorem 4 in \cite{CN15}) that our results for the nuclear norm (Theorem \ref{main}) fundamentally rely on the `quantum shape constraint' $\theta \in \Theta_+$, our results for the Frobenius norm on the other hand are valid in a general compressed sensing inference setting. This may seem surprising in light of negative results in \cite{NvdG13}, where it is shown that in the related `sparse' high-dimensional linear model, signal strength assumptions (inspired by the nonparametric statistics literature, \cite{GN10, HN11}) are generally necessary for the existence of $\ell_2$-confidence regions for the entire parameter vector. However, the information theoretic structure of the matrix inference problem is different, as is also illustrated by the fact that the signal detection rates in the model (\ref{model}) in Frobenius norm do \textit{not} depend on the low rank structure at all (see Theorem 1 in \cite{CN15}). In this sense, our findings in the matrix regression model form a remarkable exception to the rule that uncertainty quantification methodology does not generally exist for high-dimensional adaptive algorithms, unless one restricts the inferential interest to a simple semi-parametric low-dimensional functional (\cite{vdGBRD14, ZZ14, JM14, CK15}).

\section{Simulation experiments} \label{simul}

In order to illustrate the methods from this paper, we present some numerical simulations. The setting of the experiments is as follows: A random matrix $\eta\in \mm_d(\cc)$
of norm $\|\eta\|_F=R^{1/2}$ is generated according to two distinct procedures that we will specify later, and the observations are now
$$\bar {Y}_i = \mathrm{tr}(X^i \eta) + \varepsilon_i.$$
where the $\varepsilon_i$ are i.i.d.~Gaussian of mean $0$ and variance $1$. The observations are reparametrised so that $\eta$ represents the `estimation error' $\theta - \hat \theta$, and we investigate how well the statistics $$\hat r_n = \frac{1}{n} \|\bar Y\|- 1 \text{ and } \hat R_n = \frac{2}{n(n-1)} \sum_{i<j} \sum_{m,k} \bar Y_i X^i_{m,k} \bar Y_j X^j_{m,k}$$ estimate the `accuracy of estimation' $\|\eta\|_F^2= \|\theta-\hat \theta\|_F^2$, conditional on the value of $\hat \theta$. We will choose $\eta$ in order to illustrate two extreme cases: a first one where the nuclear norm $\|\eta\|_{S_1}$ is `small', corresponding to a situation where the quantum constraint is fulfilled; and a second one where the nuclear norm is large, corresponding to a situation where the quantum constraint is \textit{not} fulfilled. More precisely we generate the parameter $\eta$ in two ways:
\begin{itemize}
\item `Random Dirac' case: set a single entry (with position chosen at random on the diagonal) of $\eta$ to $R^{1/2}$, and all the other coordinates equal to $0$. 
\item `Random Pauli' case: Set $\eta$ equal to a Pauli basis element chosen uniformly at random and then multiplied by $R^{1/2}$.
\end{itemize}
The designs that we consider are the Gaussian design, and the Pauli design, described in Condition 1. We perform experiments with $d=32$, $R \in \{0.1, 1\}$ and $n \in \{100, 200,500,1000,2000,5000\}.$ Note that $d^2=1024$, so that the first four choices of $n$ correspond to the important regime $n<d^2$.  Our results are plotted as a function of the number $n$ of samples  in Figures~\ref{fig:GD}, \ref{fig:GP}, \ref{fig:PD}, \ref{fig:PP}. The solid red an blue curves are the median errors of the normalised estimation errors $$\frac{\sqrt{\hat R_n - R}}{R^{1/2}}, \quad \mathrm{and} \quad \frac{\sqrt{\hat r_n - R}}{R^{1/2}},$$ after $1000$ iterations, and the dotted lines are respectively, the (two-sided) $90\%$ quantiles. We also report  (see Tables~\ref{tab:GD}, \ref{tab:GP}, \ref{tab:PD}, \ref{tab:PP}) how well the confidence sets based on these estimates of the norm perform in terms of coverage probabilities, and of diameters.  The diameters are computed as
$$\left({\hat R_n + \frac{C_{\rm UStat} d}{n} + \frac{C_{\rm UStat}' \hat R_n^{1/2}}{\sqrt{n}} }\right)^{1/2},$$
for the U-Statistic approach and
$$\left({\hat r_n + \frac{C_{\rm RSS}}{\sqrt{n}} + \frac{C_{\rm RSS}' \hat r_n^{1/2}}{\sqrt{n}}}\right)^{1/2},$$
for the RSS approach, where we have chosen $C_{\rm UStat} = 2.5$, $C_{\rm RSS} = 1$ and $C_{\rm UStat}' = C_{\rm RSS} = 6$ for all experiments --calibrated to a $95\%$ coverage level.  From these numerical results, several observations can be made: 

1) In Gaussian random designs, the results are insensitive to the nature of $\eta$ (see Figures~\ref{fig:GD} and~\ref{fig:GP} and Tables~\ref{tab:GD} and~\ref{tab:GP}). This is not surprising since the Gaussian design is `isotropic'.

2) For Pauli designs with the quantum constraint (see Figure~\ref{fig:PD} and Table~\ref{tab:PD}) the RSS method works quite well even for small sample sizes. But the U-Stat method is not very reliable -- indeed we see no empirical evidence that Theorem \ref{ustatkill} should also hold true for Pauli design.

3) For Pauli design and when the quantum shape constraint is \textit{not} satisfied our methods cease to provide reliable results (see Figure~\ref{fig:PP} and in particular Table~\ref{tab:PP}). Indeed, when the matrix $\eta$ is chosen itself as a random Pauli (which is the hardest signal to detect under Pauli design) both the RSS and the U-Stat approach perform poorly. The confidence set are not honest anymore, which is in line with the theoretical limitations we observe in Theorem \ref{RSSthm}. Figure~\ref{fig:PP} illustrates that the methods do not detect the signal, since the norm of $\eta$ is largely under-evaluated for small sample sizes. These limitations are less pronounced when $n \ge d^2$. In this case one could use alternatively the re-averaging approach from Subsection \ref{reav} (not investigated in the simulations) to obtain honest results without the quantum shape constraint.

\begin{center}
\begin{figure}
\begin{minipage}{0.55\textwidth}
 \includegraphics[width=1 \textwidth]{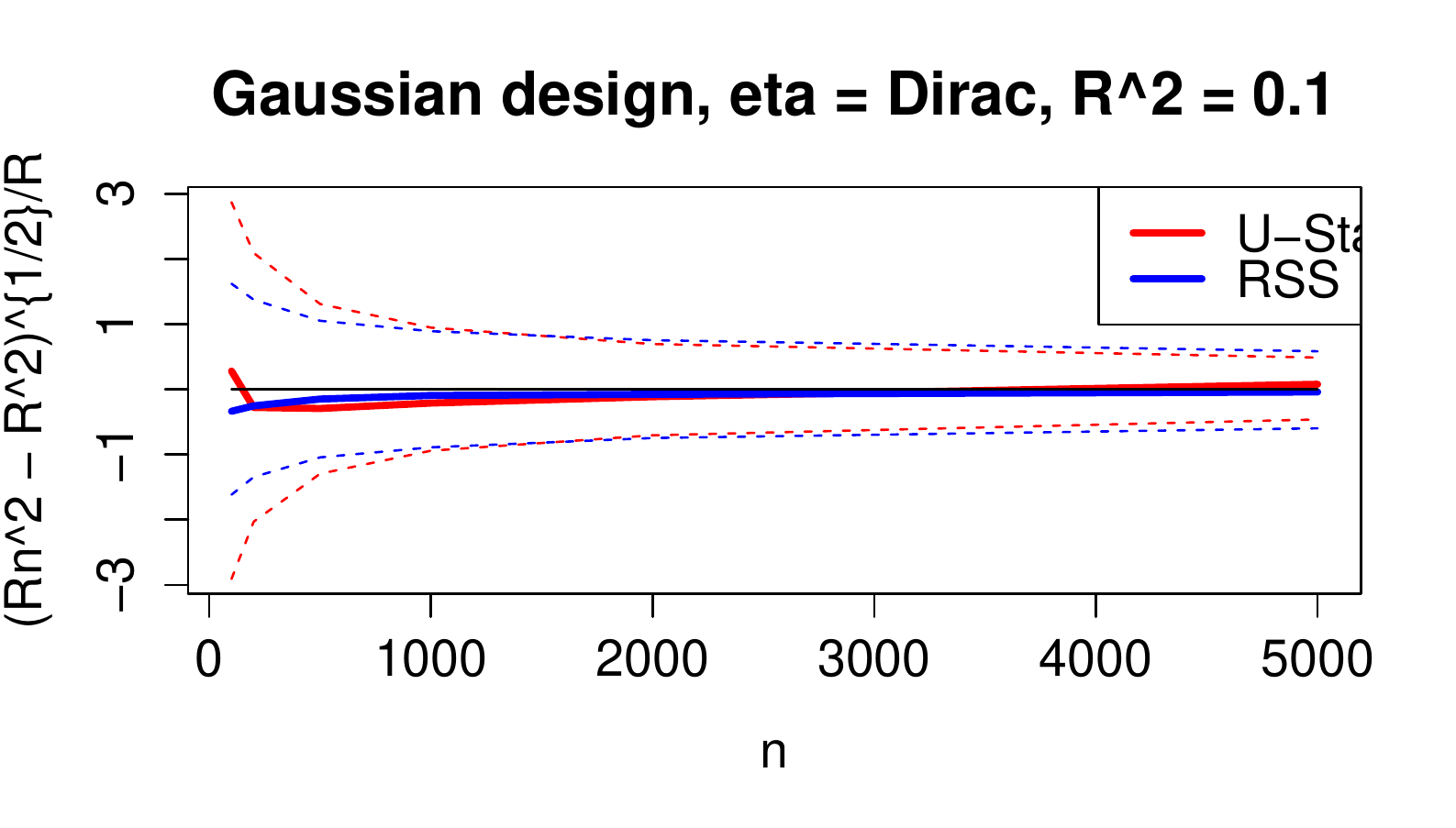}
\end{minipage}
\begin{minipage}{0.55\textwidth}
 \includegraphics[width=1 \textwidth]{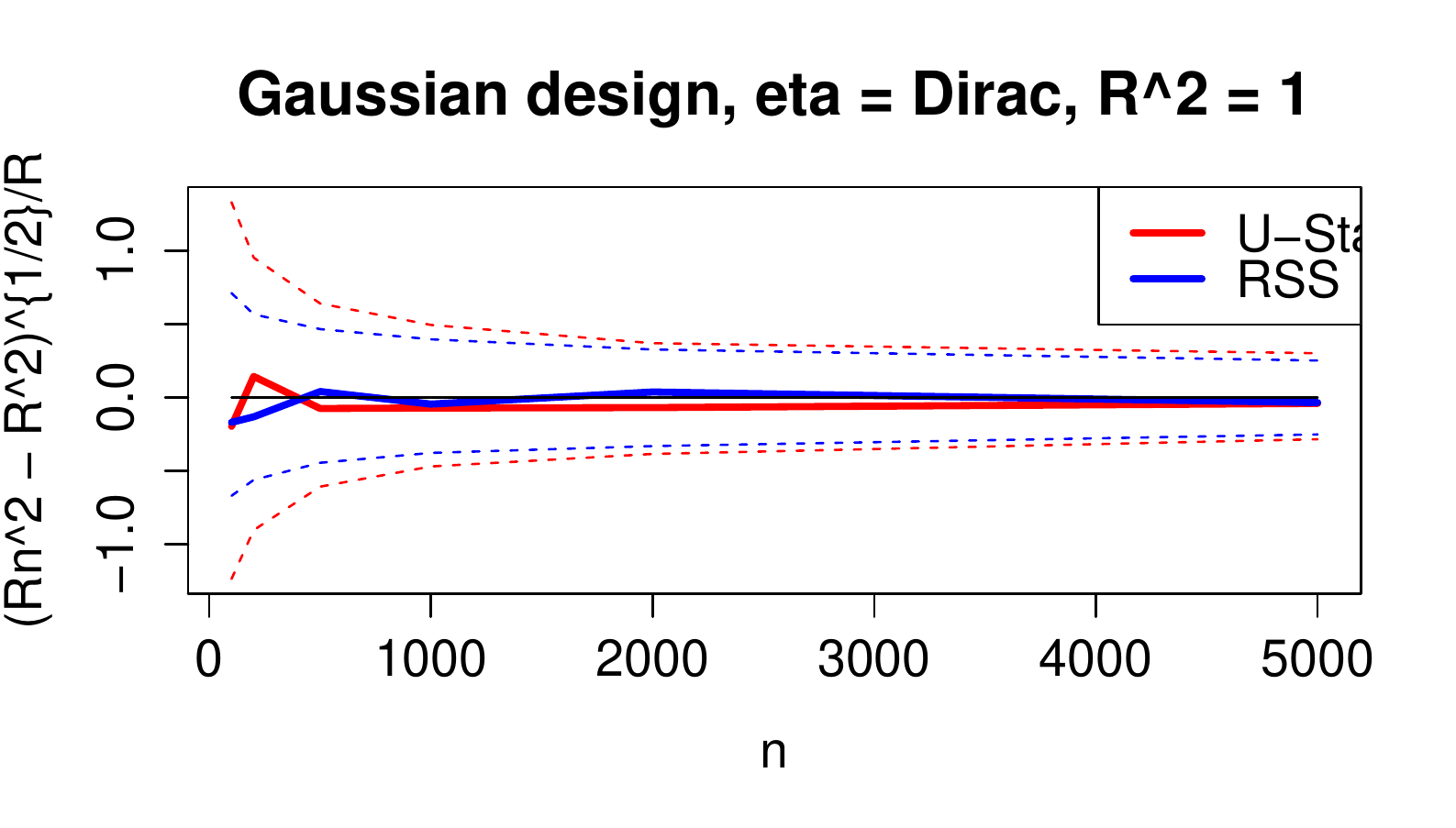}
\end{minipage}
\caption{Gaussian design, and random Dirac (a single entry, chosen at random, is non-zero on the diagonal) $\eta$, with $R = 0.1$ (left picture) and $R = 1$ (right picture).} \label{fig:GD}
\end{figure}
\end{center}

\begin{table}[h]
\begin{center}
\begin{footnotesize}
\begin{tabular}{|c||c|c|c|c|c|c||c|c|c|c|c|c|}
 \hline
 &\multicolumn{6}{|c||}{$R = 0.1$}&\multicolumn{6}{|c|}{$R = 1$}\\
\hline
\hline
 $n$ & $100$ & $200$ & $500$ & $1000$ & $2000$ & $5000$ & $100$ & $200$ & $500$ & $1000$ & $2000$ & $5000$\\
 \hline
\hline
 Coverage U-Stat & 0.97 & 0.98 &0.99 &1.00 & 1.00 & 1.00 & 0.93 &0.96 &0.97 & 0.98 &0.98 &0.98\\
 \hline
 Diameter U-Stat & 1.10 &0.64 &0.34 &0.24 & 0.18 &0.14 & 2.43 & 1.84 & 1.44 & 1.27 &1.17 & 1.10\\
 \hline
 \hline
 Coverage RSS &0.97 &0.97 &0.98 &0.98 &0.98 &0.98 & 0.99 & 0.99 & 0.99 & 0.99 & 0.99 & 0.99\\
 \hline
 Diameter RSS & 0.38 &0.31 &0.23 &0.19 &0.16 & 0.14& 1.69 & 1.49 &1.32 &1.22 &1.16 &1.10\\
 \hline
\end{tabular}
\end{footnotesize}
\end{center}
\caption{Gaussian design, and random Dirac (a single entry, chosen at random, is non-zero on the diagonal) $\eta$, with $R = 0.1$ (left table) and $R= 1$ (right table).} \label{tab:GD}
\end{table}

\begin{center}
\begin{figure}
\begin{minipage}{0.55\textwidth}
\includegraphics[width=1\textwidth]{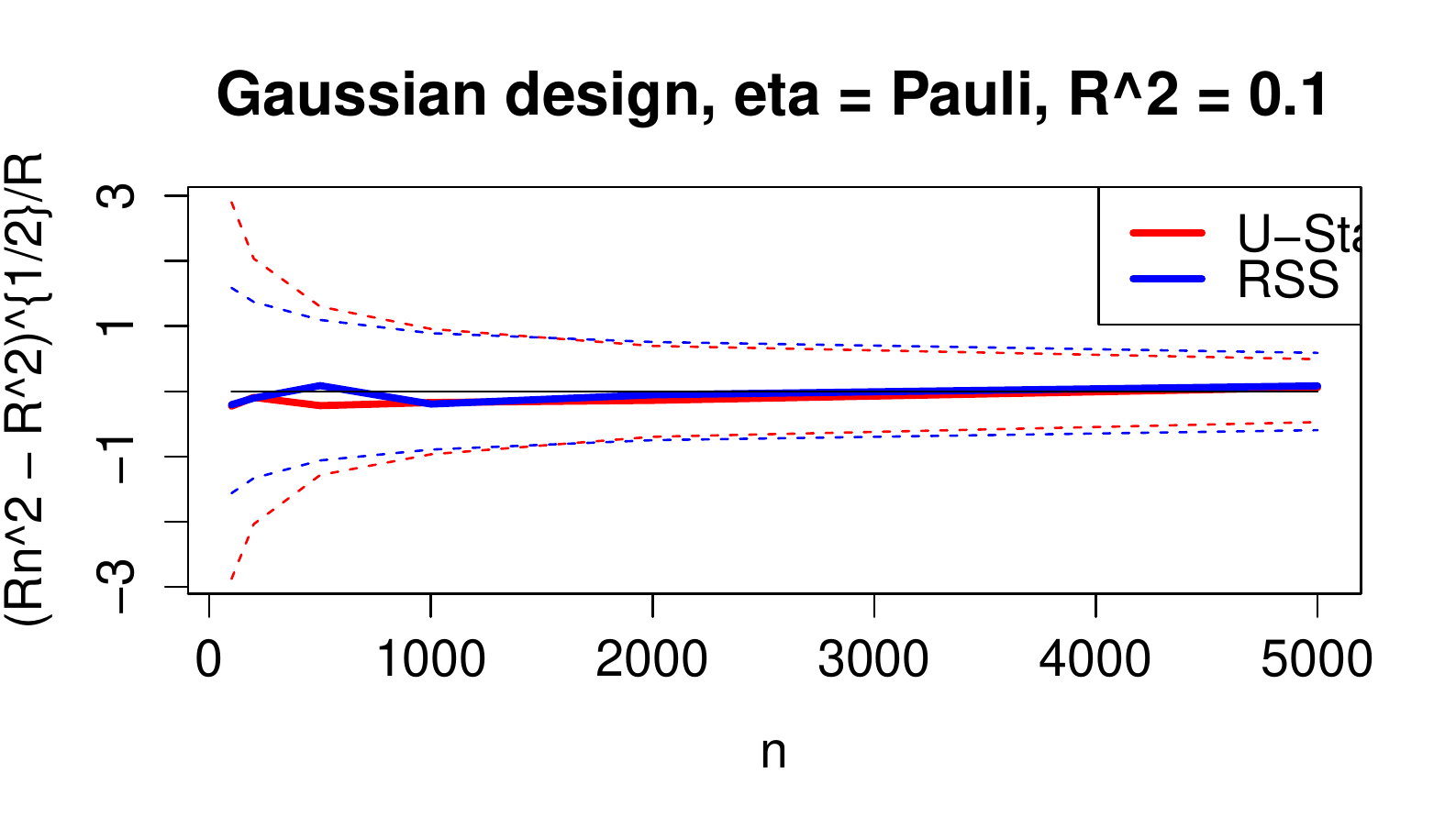}
\end{minipage}
\begin{minipage}{0.55\textwidth}
 \includegraphics[width=1\textwidth]{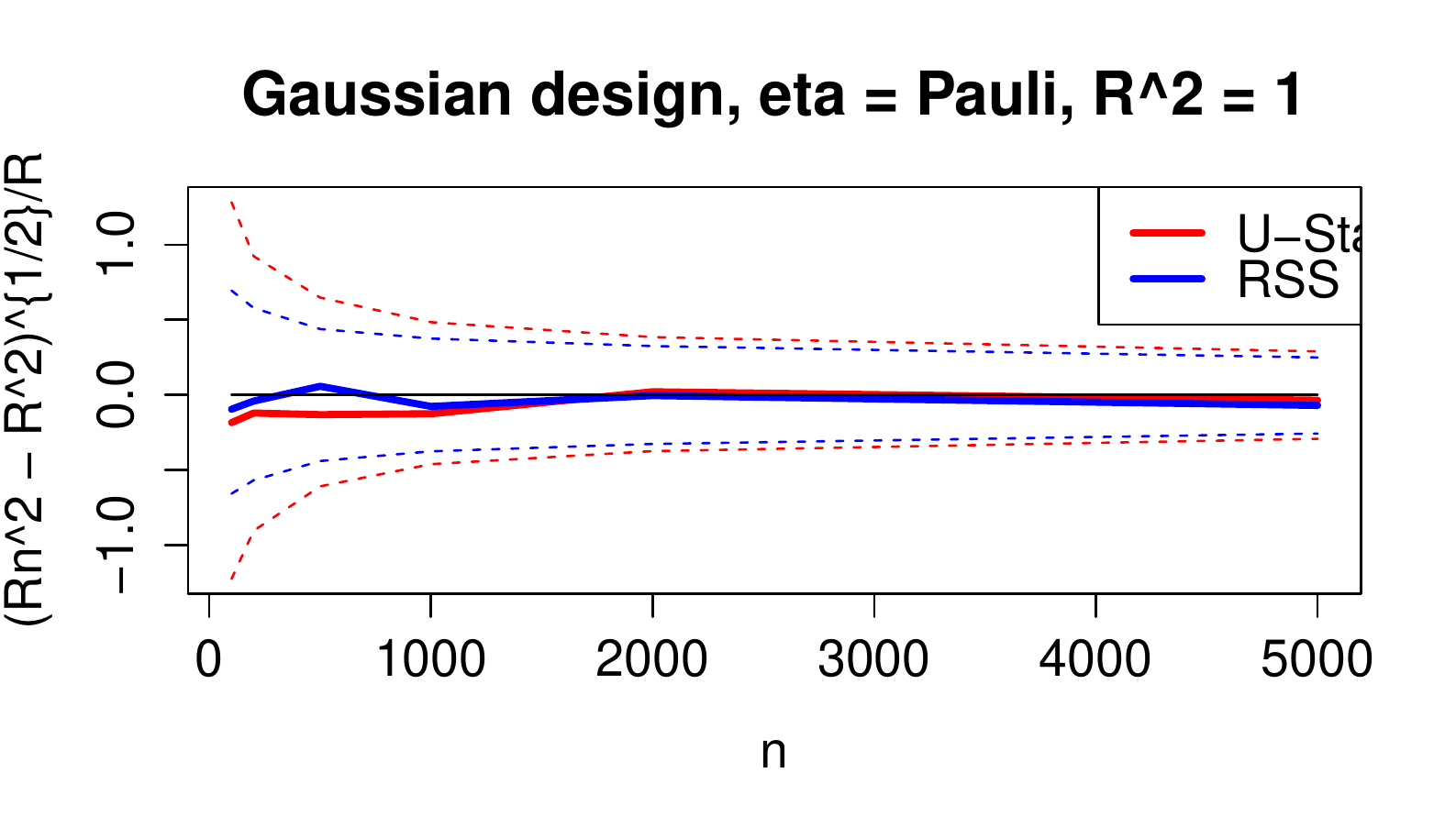}
\end{minipage}
\caption{Gaussian design, and random Pauli $\eta$, with $R = 0.1$ (left picture) and $R = 1$ (right picture).} \label{fig:GP}
\end{figure}
\end{center}

\begin{table}[h]
\begin{center}
\begin{footnotesize}
\begin{tabular}{|c||c|c|c|c|c|c||c|c|c|c|c|c|}
 \hline
 &\multicolumn{6}{|c||}{$R = 0.1$}&\multicolumn{6}{|c|}{$R = 1$}\\
\hline
\hline
 $n$ & $100$ & $200$ & $500$ & $1000$ & $2000$ & $5000$ & $100$ & $200$ & $500$ & $1000$ & $2000$ & $5000$\\
 \hline
\hline
 Coverage U-Stat & 0.98 & 0.98 & 0.99 & 0.99 & 1.0 &1.0 & 0.93 & 0.95 & 0.97 & 0.98 & 0.98 &0.98\\
 \hline
 Diameter U-Stat & 1.10 & 0.62& 0.34 & 0.24 & 0.18 & 0.14 & 2.40  & 1.83 & 1.43 & 1.27 & 1.18 & 1.10\\
 \hline
 \hline
 Coverage RSS &0.98 & 0.98 & 0.97 & 0.97 & 0.97 & 0.97 & 0.99 & 0.99 & 0.99 & 0.99 & 1.00 & 1.00\\
 \hline
 Diameter RSS &0.39 & 0.31 & 0.23 & 0.19 & 0.17 & 0.14 & 1.71 & 1.49 & 1.31 & 1.22 & 1.16 &1.10\\
 \hline
\end{tabular}
\end{footnotesize}
\end{center}
\caption{Gaussian design, and random Pauli $\eta$, with $R = 0.1$ (left table) and $R = 1$ (right table).} \label{tab:GP}
\end{table}

\begin{center}
\begin{figure}
\begin{minipage}{0.55\textwidth}
 \includegraphics[width=1\textwidth]{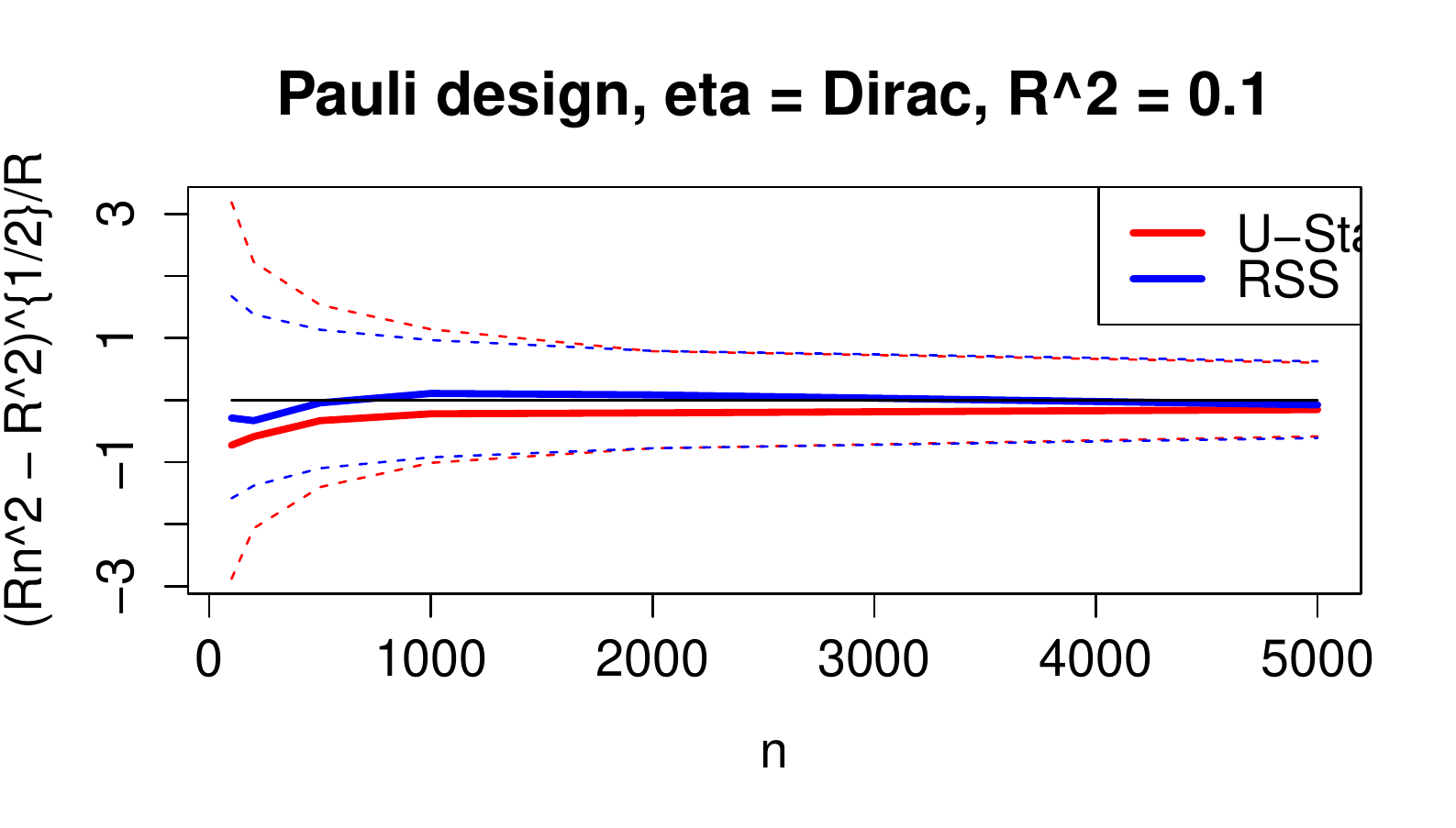}
\end{minipage}
\begin{minipage}{0.55\textwidth}
 \includegraphics[width=1\textwidth]{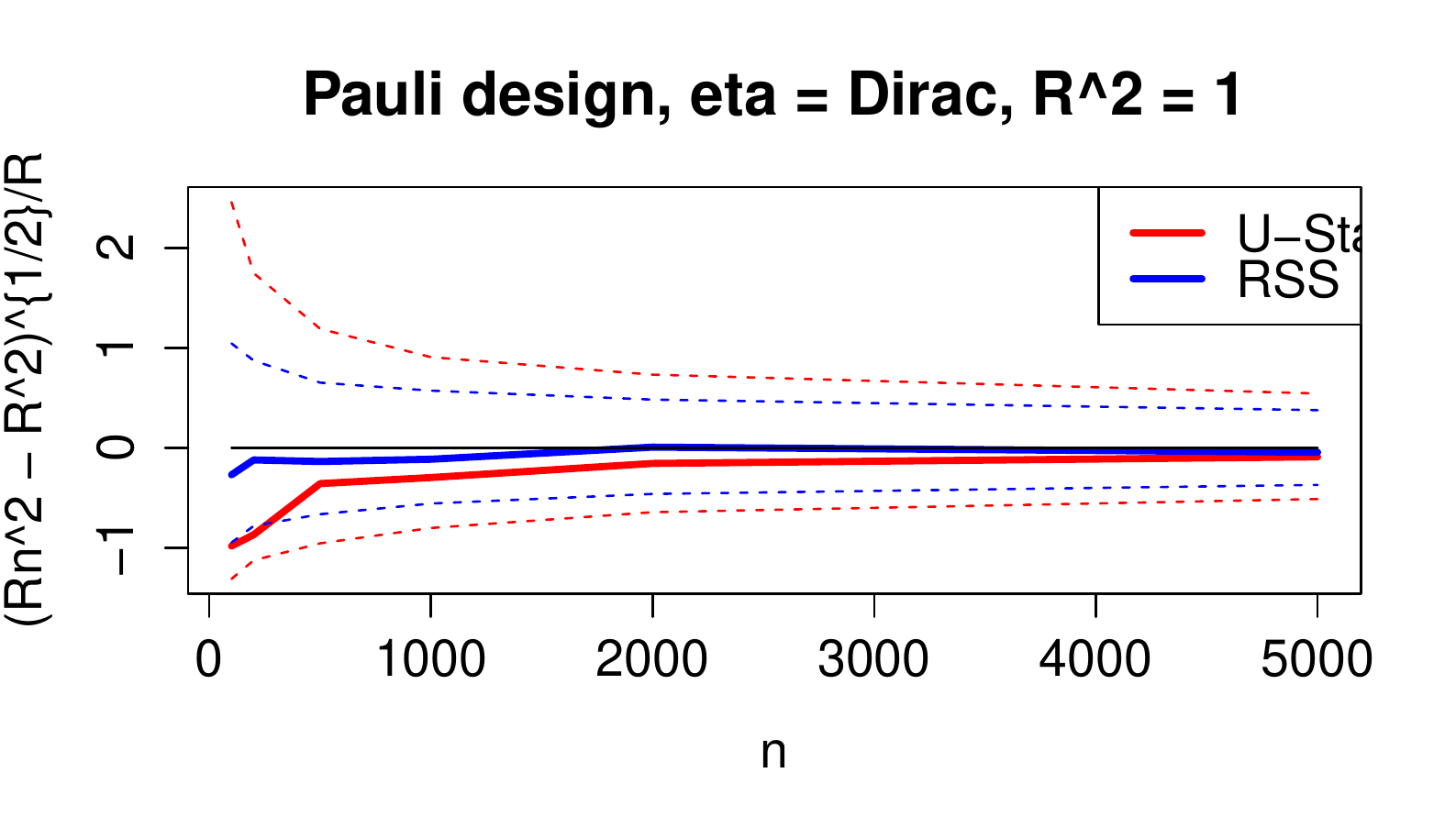}
\end{minipage}
\caption{Pauli design, and random Dirac (a single entry, chosen at random, is non-zero on the diagonal) $\eta$, with $R = 0.1$ (left picture) and $R = 1$ (right picture).} \label{fig:PD}
\end{figure}
\end{center}

\begin{table}[h]
\begin{center}
\begin{footnotesize}
\begin{tabular}{|c||c|c|c|c|c|c||c|c|c|c|c|c|}
 \hline
 &\multicolumn{6}{|c||}{$R = 0.1$}&\multicolumn{6}{|c|}{$R = 1$}\\
\hline
\hline
 $n$ & $100$ & $200$ & $500$ & $1000$ & $2000$ & $5000$ & $100$ & $200$ & $500$ & $1000$ & $2000$ & $5000$\\
 \hline
\hline
 Coverage U-Stat & 0.97 &0.98 & 0.98 &0.99 & 0.98 &0.98& 0.85 & 0.54 & 0.69 & 0.69 & 0.70 & 0.71\\
 \hline
 Diameter U-Stat &  1.10 & 0.63 & 0.34 & 0.24 &0.18 &0.14& 2.28 & 1.87 & 1.43 & 1.26 & 1.18 & 1.10\\
 \hline
 \hline
 Coverage RSS & 0.96 & 0.96 & 0.96 & 0.96 & 0.97 & 0.97& 0.88 & 0.89 & 0.88 & 0.88 & 0.88 & 0.88\\
 \hline
 Diameter RSS & 0.39 & 0.29 & 0.23 & 0.19 & 0.16 &0.14& 1.70 & 1.50 & 1.30 & 1.21 & 1.16 & 1.10 \\
 \hline
\end{tabular}
\end{footnotesize}
\end{center}
\caption{Pauli design, and random Dirac (a single entry, chosen at random, is non-zero on the diagonal) $\eta$, with $R = 0.1$ (left table) and $R = 1$ (right table).} \label{tab:PD}
\end{table}

\begin{center}
\begin{figure}
\begin{minipage}{0.55\textwidth}
 \includegraphics[width=1\textwidth]{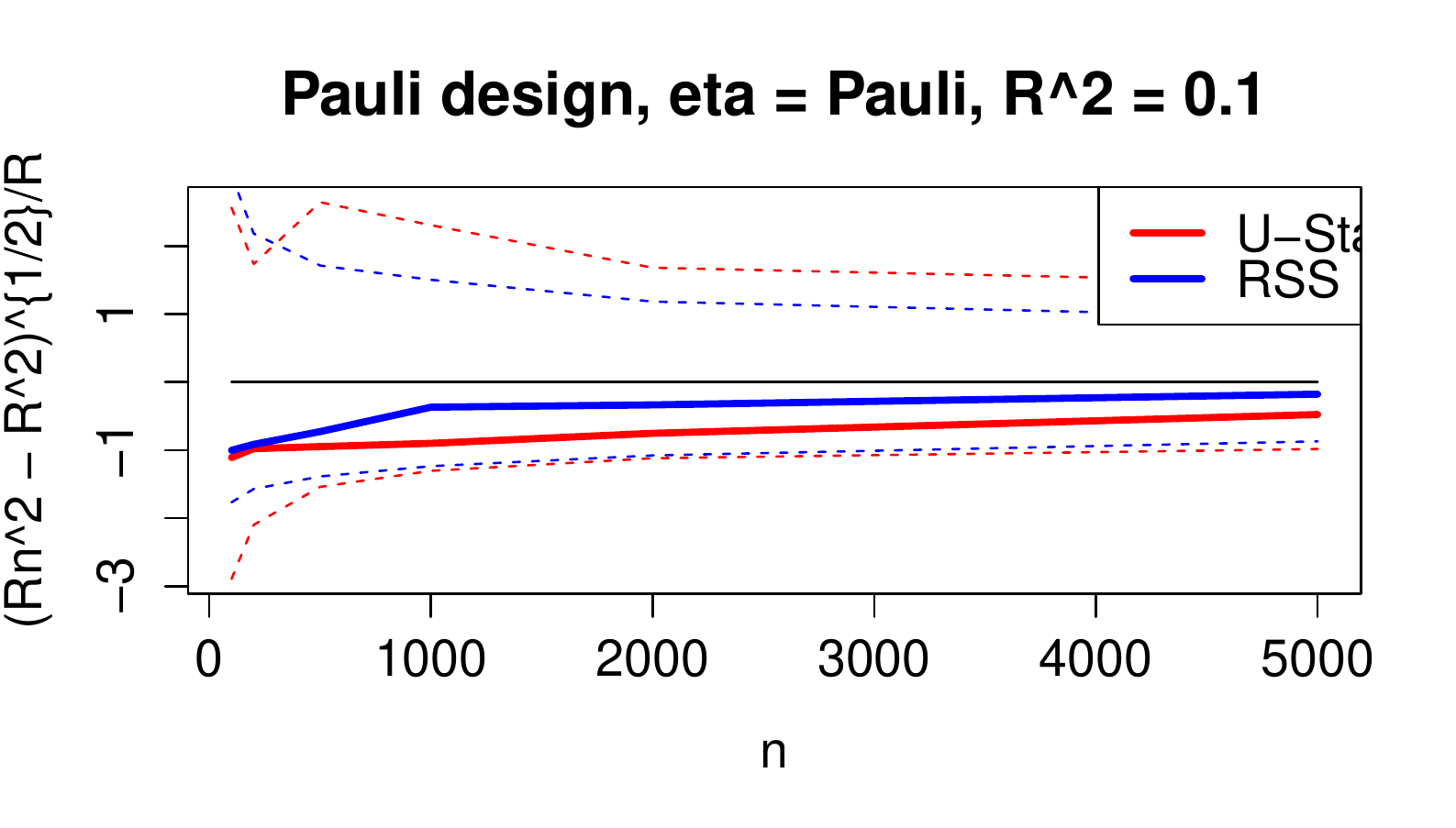}
\end{minipage}
\begin{minipage}{0.55\textwidth}
 \includegraphics[width=1\textwidth]{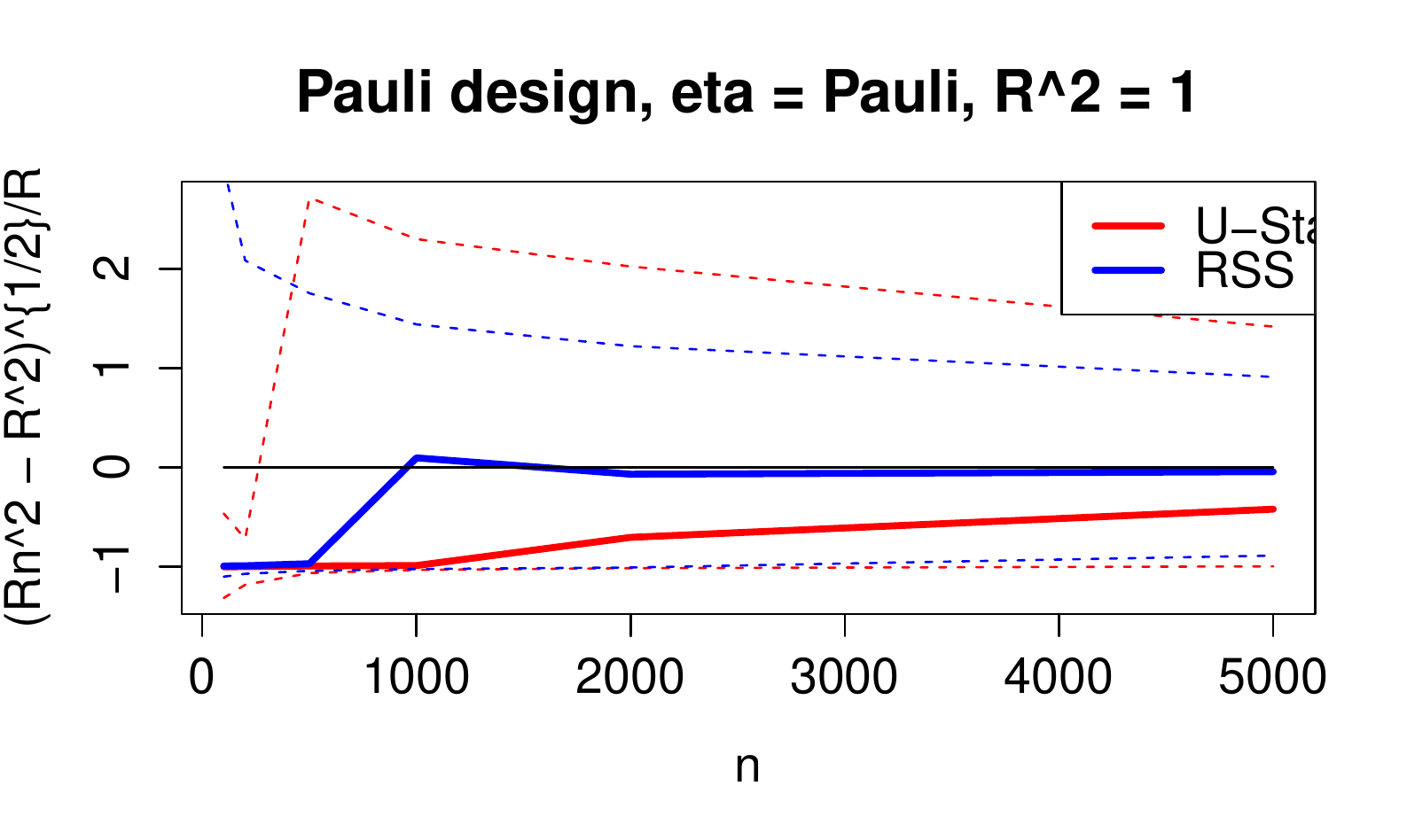}
\end{minipage}
\caption{Pauli design, and random Pauli $\eta$, with $R = 0.1$ (left picture) and $R = 1$ (right picture).} \label{fig:PP}
\end{figure}
\end{center}

\begin{table}[h]
\begin{center}
\begin{footnotesize}
\begin{tabular}{|c||c|c|c|c|c|c||c|c|c|c|c|c|}
 \hline
 &\multicolumn{6}{|c||}{$R = 0.1$}&\multicolumn{6}{|c|}{$R = 1$}\\
\hline
\hline
 $n$ & $100$ & $200$ & $500$ & $1000$ & $2000$ & $5000$ & $100$ & $200$ & $500$ & $1000$ & $2000$ & $5000$\\
 \hline
\hline
 Coverage U-Stat & 0.97 & 0.97 & 0.96 & 0.86 & 0.65 & 0.58 & 0.82 & 0.22 & 0.25 & 0.27 & 0.30 & 0.37\\
 \hline
 Diameter U-Stat & 1.09 & 0.57 & 0.34 & 0.25 & 0.18 & 0.15 & 2.45 & 2.09 & 1.33 & 1.38 & 1.19 & 1.09\\
 \hline
 \hline
 Coverage RSS & 0.93 & 0.86 & 0.77 & 0.77 & 0.77 & 0.77 & 0.12 &0.19 &0.40 & 0.63 &0.56 & 0.53\\
 \hline
 Diameter RSS &  0.38 & 0.29 & 0.22 & 0.19 & 0.16 & 0.14 & 1.71 & 1.56 & 1.31 & 1.26 & 1.14 & 1.08\\
 \hline
\end{tabular}
\end{footnotesize}
\end{center}
\caption{Pauli design, and random Pauli $\eta$, with $R = 0.1$ (left table) and $R =1$ (right table).} \label{tab:PP}
\end{table}

\section{Proofs}

\subsection{Proof of Theorem \ref{adsamp}}

Before we define the algorithm and prove the result, a few preparatory remarks are required: Our sequential procedure will be implemented in $m=1, 2, \dots, T$ potential steps, in each of which $2 \cdot 2^m = 2^{m+1}$ measurements are taken. The arguments below will show that we can restrict the search to at most $$T = O(\log (d/\epsilon))$$ steps. We also note that from the discussion after (\ref{RIP}) -- in particular since $c=c(\delta)$ from (\ref{tau}) is $O(1/\delta^2)$ -- a simple union bound over $m \le T$ implies that the RIP holds with probability $\ge 1-\delta', \text{ some } \delta'>0$, \textit{simultaneously} for every $m \le T$ satisfying $2^m \ge c'kd \overline{\log }d$, and with $\tau_{2^m}(k)<c_0$, where $c'$ is a constant that depends on $\delta', c_0$ only. The maximum over $T=O(\log (d/\epsilon))$ terms is absorbed in a slightly enlarged poly-log term. Hence, simultaneously for all such sample sizes $2^m, m \le T$, a nuclear norm regulariser exists that achieves the optimal rate from (\ref{cplan}) with $n=2^m$ and for every $k \le d$, with probability greater than $1-\delta/3$. Projecting this estimator onto $\Theta_+$ changes the Frobenius error only by a universal multiplicative constant (arguing as in (\ref{projk}) below), and we denote by $\tilde \theta_{2^m} \in \Theta_+$ the resulting estimator computed from a sample of size $2^m$.

%\smallskip

We now describe the algorithm at the $m$-th step: Split the $2^{m+1}$ observations into two halves and use the first subsample to construct $\tilde \theta_{2^m} \in \Theta_+$ satisfying (\ref{cplan}) with $\pp_\theta$-probability $\ge 1-\delta/3$. Then use the other $2^m$ observations to construct a confidence set $C_{2^m}$ for $\theta$ centred at $\tilde \theta_{2^m}$: if $2^m<d^2$ we take $C_{2^m}$ from (\ref{RSSconf}) and if $2^m \ge d^2$ we take $C_{2^m}$ from (\ref{liconf}) -- in both cases of non-asymptotic coverage at least $1-\alpha, \alpha = \delta/(3T)$ [If $\sigma$ is unknown we proceed as described in Subsection \ref{novar}]. If $|C_{2^m}|_F \le \epsilon$ we terminate the procedure ($m=\hat m$, $\hat n = 2^{\hat m+1}$, $\hat \theta = \tilde \theta_{2^{\hat m}}$), but if $|C_{2^m}|_F>\epsilon$ we repeat the above procedure with $2 \cdot 2^{m+1}=2^{m+1+1}$ new measurements, etc., until the algorithm terminates, in which case we have used $$\sum_{m \le \hat m} 2^{m+1} \lesssim 2^{\hat m} \approx \hat n$$ measurements in total. 

%\medskip

To analyse this algorithm, recall that the quantile constants $z,z_\alpha,\xi_\alpha$ appearing in the confidence sets (\ref{RSSconf}) and (\ref{liconf}) for our choice of $\alpha=\delta/(3T)$ grow at most as $O(\log(1/\alpha))=O(\log T) = o(\overline{\log }d)$. In particular in view of (\ref{cplan}) and (\ref{diameter}) or (\ref{diameter2}) the algorithm necessarily stops at a `maximal sample size' $n=2^{T+1}$ in which the squared Frobenius risk of the maximal model ($k=d$) is controlled at level $\epsilon$. Such $T \in \mathbb N$ is $O(\log (d/\epsilon))$ and depends on $\sigma, d, \epsilon, \delta$, hence can be chosen by the experimenter.

%\smallskip

To prove that this algorithms works we show that the event $$\left\{\|\hat \theta - \theta\|_F^2 > \epsilon^2\right\} \cup \left\{\hat n > \frac{C(\delta)  kd (\log d)^\gamma }{\epsilon^2} \right\} = A_1 \cup A_2$$ has probability at most $2\delta/3$ for large enough $C(\delta), \gamma$. By the union bound it suffices to bound the probability of each event separately by $\delta/3$. For the first: Since $\hat n$ has been selected we know $|C_{\hat n}|_F\le \epsilon$ and since $\hat \theta = \tilde \theta_{\hat n}$ the event $A_1$ can only happen when $\theta \notin C_{\hat n}$. Therefore $$\pp_\theta(A_1) \le \pp_\theta(\theta \notin C_{\hat n}) \le \sum_{m=1}^T \pp_\theta ( \theta \notin C_{2^m}) \le \delta \frac{T}{3T} = \frac{\delta}{3}.$$ For $A_2$, whenever $\theta \in R(k)$ and for all $m\le T$ for which $2^m \ge c'kd \overline{\log } d$, we have, as discussed above, from (\ref{diameter}) or (\ref{diameter2}) and (\ref{cplan}) that $$\ee _\theta |C_{2^m}|_F^2 \le D' \frac{kd \log T}{2^m},$$ where $D'$ is a constant. In the last inequality the expectation is taken under the distribution of the sample used for the construction of $C_{2^m}$, and it holds on the event on which $\tilde \theta_{2^m}$ realises the risk bound (\ref{cplan}). 
Then let $C(\delta), \gamma$ be large enough so that $C(\delta) kd (\log d)^\gamma/ \epsilon^2 \ge c'kd\overline{\log}d$ and let $m_0 \in \mathbb N$ be the smallest integer such that $$2^{m_0} > \frac{C(\delta)  kd (\log d)^\gamma}{\epsilon^2}.$$  Then, for $C(\delta)$ large enough and since $T=O(\log (d/\epsilon)$,
$$\pp_\theta \left(\hat n >\frac{C(\delta)  kd  (\log d)^\gamma}{\epsilon^2}\right) \le \pp_\theta \left(|C_{2^{m_0}}|^2_F >\epsilon^2\right) \le \frac{\ee _\theta|C_{2^{m_0}}|_F^2}{\epsilon^2} \le \frac{D' \log T}{C(\delta) (\log d)^\gamma }<\delta/3,$$  by Markov's inequality, completing the proof.

\begin{remark} [Isotropic sampling] \label{mod} \normalfont The proof above works for isotropic design from Condition \ref{design}a) likewise. When $2^m \ge d^2$ we replace the confidence set (\ref{liconf}) in the above proof by the confidence set from (\ref{Uconf}). Assuming also that $\|\theta\|_F \le M$ for some fixed constant $M$ we can construct a similar upper bound for $T$ and the above proof applies directly (with $T$ of slighter larger but still small enough order). 
\end{remark}

\subsection{Proof of Theorem \ref{RSSthm}}

By Lemma \ref{bernstein} below with $\vartheta=\tilde \theta - \theta$ the $\pp_\theta$-probability of the complement of the event
$$\mathcal E= \left\{\left|\frac{1}{n} \|\mathcal X(\tilde \theta - \theta)\|^2 - \|\tilde \theta- \theta\|_F^2 \right| \le \max\left(\frac{\|\theta-\tilde \theta\|_F^2}{2},  \frac{z d}{n} \right)\right\}$$
is bounded by the deviation terms $2 e^{-c n}$ and $2 e^{-C(K) z}$, respectively (note $z=0$ in Case a)). We restrict to this event in what follows.
We can decompose
\begin{align*}
\hat r_n = \frac{1}{n} \|\mathcal X(\tilde \theta - \theta)\|^2 + \frac{2}{n} \langle \varepsilon, \mathcal X(\theta-\tilde \theta) \rangle + \frac{1}{n} \sum_{i=1}^n (\varepsilon_i^2-\ee \varepsilon_i^2) =A+B+C.
\end{align*}
Since $\pp(Y+Z<0) \le \pp(Y<0) + \pp(Z<0)$ for any random variables $Y,Z$ we can bound the probability
\begin{align*}
\pp_\theta (\theta \notin C_n, \mathcal E) = \pp_\theta \left(\left\{\frac{1}{2}\|\theta-\tilde \theta\|_F^2 > A+B+C + \frac{z d}{n}  + \frac{\bar z +\xi_{\alpha/3, \sigma}}{\sqrt n}\right\}, \mathcal E \right)
\end{align*}
by the sum of the following probabilities
$$I := \pp_\theta\left(\left\{\frac{1}{2}\|\theta-\tilde \theta\|_F^2 > \frac{1}{n} \|\mathcal X(\tilde \theta - \theta)\|^2 + \frac{z d}{n}\right\}, \mathcal E  \right),$$
$$II := \pp_\theta\left(\left\{- \frac{1}{\sqrt n} \langle \varepsilon , \mathcal X(\theta-\tilde \theta)\rangle >   \bar z\right\}, \mathcal E\right), $$
$$III := \pp_\theta \left(- \frac{1}{\sqrt n} \sum_{i=1}^n (\varepsilon_i^2-\ee \varepsilon_i^2) > \xi_{\alpha/3, \sigma}\right). $$
The first probability $I$ is bounded by
\begin{align*}
&\pp_\theta\left(\left\{- \frac{1}{n} \|\mathcal X(\tilde \theta - \theta)\|^2 + \|\theta-\tilde \theta\|_F^2 > \frac{1}{2}\|\theta-\tilde \theta\|_F^2  + \frac{z d}{n}\right\}, \mathcal E  \right) \\
&\le \pp_\theta\left(\left\{\left|\frac{1}{n} \|\mathcal X(\tilde \theta - \theta)\|^2 - \|\tilde \theta- \theta\|_F^2 \right| > \max\left(\frac{\|\theta-\tilde \theta\|_F^2}{2},  \frac{z d}{n} \right)\right\}, \mathcal E\right)=0
\end{align*}
About term $II$: Conditional on $\mathcal X$ the variable $\frac{1}{\sqrt n} \langle \varepsilon , \mathcal X(\theta-\tilde \theta)\rangle$ is centred Gaussian with variance $(\sigma^2/n)\|\mathcal X(\theta-\tilde \theta)\|^2$. The standard Gaussian tail bound then gives by definition of $\bar z$, and conditional on $\mathcal X$,
\begin{align*}
&\le \exp\{-\bar z^2/2(\sigma^2/n)\|\mathcal X(\theta-\tilde \theta)\|^2\} \\
& = \exp\left\{-\frac{z_{\alpha/3} \max(3 \|\theta-\tilde \theta\|^2_F, 4zd/n)}{2\|\mathcal X(\theta-\tilde \theta)\|^2/n}\right\}  \le \exp \{-z_{\alpha/3}\}=\alpha/3
\end{align*}
since, on the event $\mathcal E$,
$$ \max(3\|\theta-\tilde \theta\|^2_F, 4zd/n) \ge (2/n)\|\mathcal X(\theta-\tilde \theta)\|^2.$$ The overall bound for $II$ follows from integrating the last but one inequality over the distribution of $X$. Term $III$ is bounded by $\alpha/3$ by definition of $\xi_{\alpha,\sigma}$.

\begin{remark}[Modification of the proof for Bernoulli errors] \label{bernprf} \normalfont
If instead of Gaussian errors we work with the error model from Subsection \ref{bernoulli}, we require a modified treatment of the terms $II, III$ in the above proof. For the pure noise term $III$ we modify the quantile constants slightly to $\xi_{\alpha, \sigma} = \sqrt{(1/\alpha)}$. If the number $T$ of preparations satisfies $T \ge 4 d^2$ then Chebyshev's inequality and (\ref{bernbds}) give
\begin{align*}
& \pp_\theta \left(\left|\frac{1}{\sqrt n} \sum_{i=1}^n (\varepsilon_i^2-\ee \varepsilon_i^2)\right| > \xi_{\alpha/3, \sigma}\right)  \le \frac{\alpha}{3n} \sum_{i=1}^n\ee  \varepsilon_i^4 \le \frac{\alpha}{3} \frac{4d^2}{T} \le \frac{\alpha}{3}. 
\end{align*}
For the `cross term' we have likewise with $z_{\alpha}=\sqrt {1/\alpha}$ and $a_i = (\mathcal X(\theta-\tilde \theta))_i$ that, on the event $\mathcal E$,
\begin{align*}
\pp_\varepsilon \left(\left\{- \frac{1}{\sqrt n} \langle \varepsilon , \mathcal X(\theta-\tilde \theta)\rangle >   \bar z\right\}, \mathcal E\right)  &\le \frac{1}{n \bar z^2} \ee _\varepsilon \left(\sum_{i=1}^n \varepsilon_i a_i 1_\mathcal E \right)^2\\ 
&\le \frac{d}{T \bar z^2} \frac{\|\mathcal X(\theta-\tilde \theta)\|^2}{n} 1_{\mathcal E}  \le \alpha/3,
\end{align*}
just as at the end of the proof of Theorem \ref{RSSthm}, so that coverage follows from integrating the last inequality w.r.t.~the distribution of $X$. The scaling $T \approx d^2$ is similar to the one discussed in Theorem 3 in ref. \cite{FGLE12}.
\end{remark}

\begin{lemma}\label{bernstein}
a) For isotropic design from Condition \ref{design}a) and any fixed matrix $\vartheta \in \hh_d(\cc)$ we have, for every $n \in \mathbb N$, $$\Pr \left(\left|\frac{1}{n}\|\mathcal X\vartheta\|^2 - \|\vartheta\|_F^2\right| >  \frac{\|\vartheta\|_F^2}{2}\right) \le 2 e^{-c n}.$$  In the standard Gaussian design case we can take $c=1/24$.

b) In the `Pauli basis' case from Condition \ref{design}b) we have for any fixed matrix $\vartheta \in \hh_d(\cc)$ satisfying the Schatten-1-norm bound $\|\vartheta\|_{S_1} \le 2$ and every $n \in \mathbb N$,
$$\Pr \left(\left|\frac{1}{n}\|\mathcal X\vartheta\|^2 - \|\vartheta\|_F^2\right| >  \max\left(\frac{\|\vartheta\|_F^2}{2}, z\frac{d}{n} \right) \right) \le 2  \exp \left\{-C(K) z  \right\}$$ where $C(K) = 1/[(16+8/3)K^2]$, and where $K$ is the coherence constant of the basis.
\end{lemma}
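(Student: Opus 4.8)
Both parts concern the same object: writing $Z_i = tr(X^i \vartheta)$ we have $\frac1n\|\mathcal X\vartheta\|^2 = \frac1n\sum_{i=1}^n Z_i^2$, where the $Z_i$ are i.i.d.\ with $\ee Z_i^2 = \|\vartheta\|_F^2$ by the `expected isometry' computations (\ref{isoexp}) and (\ref{paulexp}). Thus in each case the claim is a concentration inequality for a normalised sum of i.i.d.\ copies of $Z_i^2$ around its mean, and the only structural difference between a) and b) is the tail behaviour of $Z_i$ itself.

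\textbf{Part a).} By homogeneity I would normalise $\tilde\vartheta = \vartheta/\|\vartheta\|_F$, so that $\tilde Z_i = tr(X^i\tilde\vartheta)$ satisfies $\ee\tilde Z_i^2 = 1$ and, by Condition \ref{design}a), obeys the sub-Gaussian bound $\ee e^{\lambda\tilde Z_i}\le\tau_1 e^{\lambda^2\tau_2^2}$. The relative deviation in the statement equals $\frac1n\sum_{i=1}^n(\tilde Z_i^2-1)$, so it suffices to show this concentrates at the fixed level $1/2$. A sub-Gaussian $\tilde Z_i$ makes $\tilde Z_i^2-1$ sub-exponential, and the standard Bernstein inequality for i.i.d.\ sub-exponential variables, evaluated at the fixed level $1/2$ (independent of $n$), yields a bound of the form $2e^{-cn}$ with $c$ depending only on $\tau_1,\tau_2$. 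In the Gaussian design case $\tilde Z_i\sim N(0,1)$ and $\sum\tilde Z_i^2$ is $\chi^2_n$, for which the explicit Chernoff bound $\inf_{0<\lambda<1/2}\exp\{-n[\,3\lambda/2+\tfrac12\log(1-2\lambda)\,]\}$, optimised at $\lambda=1/6$, controls the upper tail (the lower tail being smaller); a short computation shows $c=1/24$ is admissible.

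\textbf{Part b).} Here the decisive feature is that $Z_i$ is \emph{bounded}. Writing $X^i = dE_j$ for a uniformly drawn basis index $j$, the trace duality $|tr(E_j\vartheta)|\le\|E_j\|_{op}\|\vartheta\|_{S_1}$ together with $\|E_j\|_{op}\le K/\sqrt d$ and the hypothesis $\|\vartheta\|_{S_1}\le 2$ gives $|Z_i| = d\,|tr(E_j\vartheta)|\le 2K\sqrt d$, hence $Z_i^2\le 4K^2 d =: M$; this is precisely where the nuclear-norm (incoherence) constraint is indispensable. For the centred bounded variables $W_i = Z_i^2 - \ee Z_i^2$ I would apply Bernstein's inequality with $|W_i|\le M$ and $\mathrm{Var}(Z_i^2)\le\ee Z_i^4\le M\,\ee Z_i^2 = 4K^2 d\|\vartheta\|_F^2 =: v$, in the form
\[
\pp\left(\frac1n\sum_{i=1}^n W_i > t\right)\le\exp\left\{-\frac{nt^2}{2v + \tfrac23 Mt}\right\}.
\]
The key observation is that the threshold $t = \max(\|\vartheta\|_F^2/2,\,zd/n)$ satisfies both $\|\vartheta\|_F^2\le 2t$ and $nt\ge zd$ simultaneously, regardless of which term attains the maximum. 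Substituting $\|\vartheta\|_F^2\le 2t$ into $v$ gives $v\le 8K^2 dt$, so the denominator is at most $(16+\tfrac83)K^2 dt$ and the exponent becomes at least $\frac{nt}{(16+8/3)K^2 d}\ge\frac{z}{(16+8/3)K^2} = C(K)z$. A matching lower-tail estimate together with a union bound supplies the factor $2$, yielding the claimed $2\exp\{-C(K)z\}$.

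\textbf{Expected obstacle.} The Bernstein bookkeeping is routine; the conceptual crux is Part b), namely recognising that $\|\vartheta\|_{S_1}\le 2$ is exactly what turns $Z_i^2$ into a variable bounded by $O(d)$, so that the denominator scales like $d$ rather than the uncontrolled $\|Z_i^2\|_\infty$ that a Pauli-type target of large nuclear norm would produce (this is the mechanism behind the $\Theta_+$ restriction elsewhere in the paper). In Part a) the only mild care needed is the verification of the explicit constant $c=1/24$.
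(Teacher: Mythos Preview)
Your proposal is correct and follows essentially the same route as the paper: normalise and use sub-Gaussian $\Rightarrow$ sub-exponential plus Bernstein in Part~a), and in Part~b) bound $Z_i^2\le 4K^2 d$ via trace duality and $\|\vartheta\|_{S_1}\le 2$, then apply Bernstein for bounded variables. Your simultaneous use of $\|\vartheta\|_F^2\le 2t$ and $nt\ge zd$ is a slightly cleaner packaging of what the paper does by an explicit two-regime case split, and your direct Chernoff optimisation for the Gaussian case lands on a constant at least $1/24$, matching the paper's citation of a ready-made $\chi^2$ tail bound.
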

\begin{proof}
We first prove the isotropic case. From (\ref{isoexp}) we see
\begin{align*}
& \Pr \left(\left|\frac{1}{n}\|\mathcal X\vartheta\|^2 - \|\vartheta\|_F^2\right| >  \|\vartheta\|_F^2/2 \right)  = \Pr\left(\left| \sum_{i=1}^n (Z_i^2 - \ee Z^2_1)/\|\vartheta\|_F^2 \right| > n/2 \right)
\end{align*}
where the $Z_i/\|\vartheta\|_F$ are sub-Gaussian random variables. Then the $Z_i^2/\|\vartheta\|_F^2$ are sub-exponential and we can apply Bernstein's inequality (Prop. 4.1.8 in ref. \cite{GN15}) to the last probability. We give the details for the Gaussian case and derive explicit constants. In this case $g_i := Z_i/\|\vartheta\|_F \sim N(0,1)$ so the last probability is bounded, using Theorem 4.1.9 in ref. \cite{GN15}, by
$$\Pr\left(\left| \sum_{i=1}^n (g_i^2 - 1) \right| > \frac{n}{2} \right) \le 2 \exp \left\{- \frac{n^2/4}{4n+2n}\right\},$$ and the result follows.

Under Condition \ref{design}b), if we write $D= \max(n\|\vartheta\|_F^2/2, z d)$ we can reduce likewise to bound the probability in question by
\begin{align*}
& \Pr\left(\left| \sum_{i=1}^n (Y_i - \ee Y_1) \right| > D \right) 
\end{align*}
where the $Y_i = |tr(X^i \vartheta)|^2$ are i.i.d.~bounded random variables. Using $\|E_i\|_{op} \le K/\sqrt d$ from Condition \ref{design}b) and the quantum constraint  $\|\vartheta\|_F \le \|\vartheta\|_{S_1} \le 2$ we can bound
$$|Y_i| \le d^2 \max_i \|E_i\|^2_{op} \|\vartheta\|_{S_1}^2 \le 4K^2d  := U$$ as well as
$$\ee Y_i^2 \le U \ee |Y_i| \le 4 K^2 d \|\vartheta\|_F^2 := s^2.$$  Bernstein's inequality for bounded variables (e.g., Theorem 4.1.7 in ref. \cite{GN15}) applies to give the bound
$$2 \exp\left\{-\frac{D^2}{2ns^2 + \frac{2}{3}UD}\right\} \le 2  \exp \left\{-C(K) z  \right\},$$ after some basic computations, by distinguishing the two regimes of $D=n\|\vartheta\|_F^2/2 \ge zd$ and $D=zd \ge n\|\vartheta\|_F^2/2$.
\end{proof}

\subsection{Proof of Theorem \ref{ustatkill}}
 
Since $\ee _\theta \hat R_n = \|\theta - \tilde \theta\|_F^2$ we have from Chebyshev's inequality
\begin{align*}
\pp_\theta (\theta \notin C_n) &\le \pp_\theta \left( |\hat R_n -\ee \hat R_n| > z_{\alpha, n} \right) \\
& \le \frac{{\rm Var}_\theta(\hat R_n -\ee \hat R_n)}{z_{\alpha_n}^2}.
\end{align*}
Now $U_n =\hat R_n -\ee _\theta \hat R_n$ is a centred U-statistic and has Hoeffding decomposition $U_n = 2L_n + D_n$ where 
$$L_n = \frac{1}{n} \sum_{i =1}^{n} \sum_{m,k} (Y_i X^i_{m,k} - \ee _\theta[Y_i X^i_{m,k}])(\Theta_{m,k}-\tilde \Theta_{m,k})
 $$ is the linear part
 and
 $$D_n = \frac{2}{n(n-1)} \sum_{i<j} \sum_{m,k} (Y_i X^i_{m,k} - \ee _\theta[Y_i  X^i_{m,k}])(Y_j X^i_{m,k} - \ee [Y_j  X^i_{m,k}]) $$
the degenerate part. We note that $L_n$ and $D_n$ are orthogonal in $L^2(\pp_\theta)$.

The linear part can be decomposed into 
$L_n = L_n^{(1)} + L_n^{(2)}$
where 
$$L_n^{(1)} =\frac{1}{n} \sum_{i =1}^{n} \sum_{m,k} \left(\sum_{m',k'} X^i_{m',k'} X^i_{m, k} \Theta_{m',k'} - \Theta_{m,k} \right)(\Theta_{m,k}-\tilde \Theta_{m,k})$$
and 
$$L_n^{(2)}  = \frac{1}{n} \sum_{i =1}^{n} \varepsilon_i \sum_{m,k} X^i_{m,k}(\Theta_{m,k}-\tilde \Theta_{m,k}).$$
Now by the i.i.d.~assumption we have $${\rm Var}_\theta(L_n^{(2)}) = \sigma^2\frac{\|\tilde \theta - \theta\|^2_F}{n}.$$
Moreover, by transposing the indices $m,k$ and $m',k'$ in an arbitrary way into single indices $M=1, \dots, d^2, K=1, \dots, d^2$, $d^2=p$, respectively, basic computations given before eq.~(28) in ref. \cite{NvdG13} imply that the variance of the second term is bounded by $${\rm Var}_\theta(L^{(1)}_n) \leq \frac{c\|\theta - \tilde \theta\|^2_F \|\theta\|^2_F}{n}$$ where $c$ is a constant that depends only on $\ee X_{1,1}^4$ (which is finite since the $X_{1,1}$ are sub-Gaussian in view of Condition \ref{design}a)). Moreover, the degenerate term satisfies $${\rm Var}_\theta(D_n) \leq c \frac{d}{n^2} \|\theta\|_F^4$$ in view of standard $U$-statistic computations leading to eq.\ (6.6) in ref. \cite{ITV10}, with $d^2=p$, and using the same transposition of indices as before. This proves coverage by choosing the constants in the definition of $z_{\alpha,n}$ large enough. 
 
\subsection{Proof of Theorem \ref{main}}

We prove the result for symmetric matrices with real entries -- the case of Hermitian matrices requires only minor (mostly notational) adaptations.

%\medskip

Given the estimator $\tilde \theta_{\rm Pilot}$, we can easily transform it into another estimator $\tilde \theta$ for which the following is true.

\begin{theorem}\label{estcond}
There exists an estimator $\tilde \theta$ that satisfies, uniformly in $\theta \in R(k)$, for any $k \le d$ and with $\pp_\theta$-probability greater than $1-2\delta/3$, 
\begin{align*}%\label{eq:bibi}
\|  \tilde \theta - \theta\|_{F} \leq r_n(k),
\end{align*}
as well as,
$$\tilde \theta \in R(k),$$ and then also
\begin{align*}%\label{eq:bibi}
 \| \tilde \theta- \theta\|_{S_1} \leq \sqrt{2k}r_n(k).
\end{align*}
\end{theorem}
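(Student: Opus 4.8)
The plan is to start from the pilot estimator $\tilde \theta_{\rm Pilot}$, which by assumption (\ref{oracle}) satisfies the oracle inequality, and to construct $\tilde \theta$ by a hard-thresholding / rank-truncation step applied to the spectral decomposition of $\tilde \theta_{\rm Pilot}$. Concretely, I would write $\tilde \theta_{\rm Pilot}$ in its eigen-expansion (it is symmetric, so this is clean) and define $\tilde \theta$ to be the best rank-$k$ approximation obtained by keeping the $k$ largest eigenvalues in absolute value and zeroing out the rest. By construction this immediately gives $\tilde \theta \in R(k)$, dispatching the middle claim for free.

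For the Frobenius bound, the key identity is that truncation to the top $k$ eigenvalues is the optimal rank-$k$ approximation in $\|\cdot\|_F$ (Eckart--Young), so $\|\tilde \theta - \tilde \theta_{\rm Pilot}\|_F \le \|S - \tilde \theta_{\rm Pilot}\|_F$ for every rank-$k$ matrix $S$; in particular I may take $S=\theta$ itself since $\theta \in R(k)$. A triangle inequality then yields
\begin{align*}
\|\tilde \theta - \theta\|_F \le \|\tilde \theta - \tilde \theta_{\rm Pilot}\|_F + \|\tilde \theta_{\rm Pilot} - \theta\|_F \le 2\|\tilde \theta_{\rm Pilot} - \theta\|_F,
\end{align*}
and feeding in the pilot risk bound (\ref{risk}), namely $\|\tilde \theta_{\rm Pilot} - \theta\|_F \le r_n(k)/2$, gives exactly $\|\tilde \theta - \theta\|_F \le r_n(k)$. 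The only subtlety is getting the constant right: I would absorb the factor-of-two loss from truncation directly into the definition of $r_n(k)$, which is why (\ref{risk}) is stated with the $1/4$ factor in $r_n^2(k)/4 = D\sigma^2 k_0 d/n$ — the truncation step is precisely what converts $r_n/2$ into $r_n$.

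The nuclear norm bound is then an immediate consequence of the rank control combined with the norm comparison (\ref{l1l2}). Since $\tilde \theta - \theta$ is a difference of two matrices each of rank at most $k$, it has rank at most $2k$, so (\ref{l1l2}) gives $\|\tilde \theta - \theta\|_{S_1} \le \sqrt{2k}\,\|\tilde \theta - \theta\|_F \le \sqrt{2k}\,r_n(k)$, which is the claimed inequality. The main obstacle — really the only place demanding care — is the constant bookkeeping in the Frobenius step: one must verify that the Eckart--Young optimality applies to the \emph{symmetric} best rank-$k$ approximation (keeping largest-magnitude eigenvalues, not largest eigenvalues, which matters because $\theta$ and $\tilde\theta_{\rm Pilot}$ need not be positive semidefinite at this stage, before the later projection onto $\Theta_+$), and that the uniformity over $R(k)$ and the probability budget $1-2\delta/3$ are simply inherited verbatim from the pilot's oracle inequality rather than incurring any further loss.
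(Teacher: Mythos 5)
Your construction is close in spirit to the paper's, and your constant bookkeeping (the factor of two from truncation being exactly what the $1/4$ in the definition of $r_n^2(k)/4$ is there to absorb) is correct. But there is a genuine gap: your estimator requires knowledge of $k$. You define $\tilde\theta$ as the best rank-$k$ approximation of $\tilde\theta_{\rm Pilot}$, where $k$ is the rank of the unknown $\theta$ --- so as written this is not an estimator. The theorem asserts the existence of a \emph{single} $\tilde\theta$ for which the three bounds hold simultaneously for every $k \le d$ and every $\theta \in R(k)$, and this uniformity is what is actually used downstream: Theorem \ref{main} adapts to an unknown rank $k_0 \le k$, and Step I of its proof needs $\eta = \tilde\theta - \theta \in R(2k_0)$ with $\|\eta\|_F \le r_n(k_0)$ for the \emph{true} (unknown) rank. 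If you instead truncate at some fixed known upper bound $k$, the Frobenius bound survives (Eckart--Young still compares against $\theta \in R(k_0) \subset R(k)$), but the rank claim degrades to $\tilde\theta \in R(k)$ rather than $R(k_0)$, so (\ref{l1l2}) only yields $\|\tilde\theta-\theta\|_{S_1} \le \sqrt{k+k_0}\, r_n(k_0)$, which for $k_0 \ll k$ is much weaker than the claimed $\sqrt{2k_0}\, r_n(k_0)$.

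The missing ingredient is a Lepski-type rank selection, which is exactly what the paper does: define $\tilde\theta$ as a matrix of \emph{smallest} rank $k'$ satisfying $\|\tilde\theta_{\rm Pilot} - \tilde\theta\|_F \le r_n(k')/2$. Such a $k'$ is at most the true rank $k$ with probability $\ge 1 - 2\delta/3$, because $\theta$ itself witnesses the inequality at rank $k$ by (\ref{risk}); then the triangle inequality gives $\|\tilde\theta - \theta\|_F \le r_n(k')/2 + r_n(k)/2 \le r_n(k)$ by monotonicity of $r_n(\cdot)$, and $\tilde\theta - \theta \in R(2k)$ yields the nuclear norm bound via (\ref{l1l2}). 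Your Eckart--Young observation is still useful here --- it is precisely how one \emph{computes} this minimal-rank element, by checking the eigenvalue truncations of $\tilde\theta_{\rm Pilot}$ at increasing ranks $k' = 0, 1, 2, \dots$ against the threshold $r_n(k')/2$ --- but the selection rule, not the truncation, is the substance of the proof.
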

\begin{proof}
Let $\tilde \theta_{\rm Pilot}$ and let $\tilde \theta$ be the element of $R(d)$ with smallest rank $k'$ such that $$\|\tilde \theta_{\rm Pilot} - \tilde \theta \|_F^2 \le \frac{r^2_n(k')}{4}.$$ Such $\tilde \theta$ exists and has rank $\le k$, with probability $\ge 1-2\delta/3$, since $\theta \in R(k)$ satisfies the above inequality in view of (\ref{risk}). The $\|\cdot\|^2_F$-loss of $\tilde \theta$ is no larger than $r_n(k)$ by the triangle inequality $$\|\tilde \theta - \theta\|_F \le \|\tilde \theta - \tilde \theta_{\rm Pilot} \|_F + \|\tilde \theta_{\rm Pilot} - \theta\|_F,$$ and this completes the proof of the third claim in view of (\ref{l1l2}). 
\end{proof}

%\medskip

The rest of the proof consists of three steps: The first establishes some auxiliary empirical process type results, which are then used in the second step to construct a sufficiently good simultaneous estimate of the eigenvalues of $\theta$. In Step III the coverage of the confidence set is established.

%\medskip

\textbf{STEP I}

Let $\theta \in R^+(k) = R(k) \cap \Theta_+$ and let $\tilde \theta$ be the estimator from Theorem \ref{estcond}. Then with probability $\ge 1-2\delta/3$, and if $\eta = \tilde \theta - \theta$, we have 
\begin{equation}
\|\eta\|_F^2 \le r^2_n(k)~~~~\forall \theta \in R^+(k),
\end{equation}
and that $\eta \in R(2k).$ For the rest of the proof we restrict in what follows to the event of probability greater than or equal to $1-2\delta/3$ described by a) and b) in the hypothesis of the theorem. 

Write $Y_i' = Y_i - tr(X^i \tilde \theta)$ for the `new observations' $$Y_i' = tr (X^i \eta) + \varepsilon_i, ~~i=1, \dots, n.$$ For any $d \times d'$ matrix $V$ we set $$\tilde \gamma_\eta(V) = V^T \left(\frac{1}{n} \sum_{i=1}^n X^i Y_i' \right) V$$ which estimates $$\gamma_\eta(V)= V^T\eta V.$$ 
Let now $U$ be any unit vector in $\rr^d$. Then in the above notation ($d'=1$) we can write
\begin{align*}
\tilde \gamma_\eta(U) &= \frac{1}{n} \sum_{i=1}^n \sum_{m, m' \le d} U_m U_{m'} X^{i}_{m,m'}Y_i' \\
&= \frac{1}{n} \sum_{i=1}^n \sum_{m, m' \le d} U_m U_{m'} X^{i}_{m,m'}(tr (X^i\eta) + \varepsilon_i) \\
&= \frac{1}{n} \sum_{i=1}^n \sum_{m, m' \le d} U_m U_{m'} X^{i}_{m,m'}\left(\sum_{k,k'\le d} X^i_{k,k'} \eta_{k,k'}+ \varepsilon_i \right).
\end{align*}
%Let now $\mathbb U$ be the $d^2 \times 1$ vector with entries $\mathbb U_M = u_m u_{m'}$ (the vectorization of $UU^T$), recall the matrix $\mathbb X$ from before Condition \ref{MRIP}, and let $\hh$ be the $d^2 \times 1$ vector obtained from vectorising $\eta$ (column wise). 
If $\mathbb U$ denotes the $d \times d$ matrix $UU^T$, the last quantity can be written as
$$\frac{1}{n} \langle \mathcal X \mathbb U,  \mathcal X\eta \rangle + \frac{1}{n} \langle \mathcal X \mathbb U, \varepsilon \rangle.$$
We can hence bound, for $\mathcal S = \{U \in \rr^d: \|U\|_2=1\}$
\begin{align*}
& \sup_{\eta \in R(2k), \|\eta\|_F \le r_n(k), U \in \mathcal S} |\tilde \gamma_\eta (U)-\gamma_\eta(U)| \\
&\le \sup_{\eta \in R(2k), \|\eta\|_F \le r_n(k), U \in \mathcal S} \left|\frac{1}{n} \langle \mathcal X \mathbb U, \mathcal X \eta \rangle - \langle \mathbb U, \eta \rangle\right|  + \sup_{U \in \mathcal S} \left|\frac{1}{n} \langle \mathcal X \mathbb U, \varepsilon \rangle\right|.
\end{align*}

\begin{lemma}
The right hand side on the last inequality is, with probability greater than $1-\delta$, of order $$v_n  := O \left( r_n(k) \tau_n(k) + \sqrt{\frac{d}{n}}\right).$$
\end{lemma}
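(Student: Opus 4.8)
The plan is to treat the two suprema on the right-hand side separately: a \emph{deterministic} piece governed by the RIP and a \emph{stochastic} noise piece, showing the former is $O(r_n(k)\tau_n(k))$ and the latter is $O(\sqrt{d/n})$, each on an event of high probability. Since by hypothesis a) the RIP (\ref{RIP}) with $\tau_n(k)$ as in (\ref{MRIP}) holds with probability $\ge 1-2\delta/3$, I restrict to that event and reserve a budget of $\delta/3$ for the noise event, so that the intersection has probability $\ge 1-\delta$.

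For the first supremum the key structural fact is that $\mathbb U=UU^T$ has rank one and unit Frobenius norm, while (from Step I) every competing $\eta\in R(2k)$ has rank at most $2k$, so $\mathbb U\pm\eta$ has rank at most $2k+1\le 3k$. I would apply the polarization identity
$$\frac{1}{n}\langle \mathcal X \mathbb U, \mathcal X \eta\rangle - \langle \mathbb U, \eta\rangle = \frac{1}{4}\left(\frac{1}{n}\|\mathcal X(\mathbb U + \eta)\|^2 - \|\mathbb U + \eta\|_F^2\right) - \frac{1}{4}\left(\frac{1}{n}\|\mathcal X(\mathbb U - \eta)\|^2 - \|\mathbb U - \eta\|_F^2\right)$$
and bound each parenthesis by the RIP applied at rank $\le 3k$, which costs only a constant factor since $\tau_n(3k)\le\sqrt 3\,\tau_n(k)$ by the explicit form (\ref{MRIP}). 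Using $\|\mathbb U+\eta\|_F^2+\|\mathbb U-\eta\|_F^2=2(\|\mathbb U\|_F^2+\|\eta\|_F^2)$ and rescaling to unit norm yields the bilinear estimate $|\tfrac1n\langle\mathcal X A,\mathcal X B\rangle-\langle A,B\rangle|\lesssim\tau_n(3k)\|A\|_F\|B\|_F$, uniformly over the index set. With $\|\mathbb U\|_F=1$ and $\|\eta\|_F\le r_n(k)$ this gives the uniform bound $O(\tau_n(k)\,r_n(k))$. This step is routine once the rank bookkeeping is in place.

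The substantive work is the stochastic term. Writing $W=\frac1n\sum_{i=1}^n\varepsilon_i X^i$ and using $(\mathcal X\mathbb U)_i=tr(X^iUU^T)=U^TX^iU$, one has
$$\frac{1}{n}\langle \mathcal X \mathbb U, \varepsilon\rangle = \frac{1}{n}\sum_{i=1}^n \varepsilon_i\, U^T X^i U = U^T W U.$$
Since only the quadratic form $U^TWU$ enters, I may replace $W$ by its symmetric part $W_s=\tfrac12(W+W^T)$, whence
$$\sup_{U \in \mathcal S}\left|\frac{1}{n}\langle \mathcal X \mathbb U, \varepsilon\rangle\right| = \sup_{\|U\|=1}|U^T W_s U| = \|W_s\|_{op}.$$
Thus the continuum of directions $U$ is eliminated for free, and the problem reduces to a single operator-norm deviation for the mean-zero sum of independent matrices $\frac1n\varepsilon_iX^i$. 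Conditioning on the design and invoking a matrix Bernstein/Gaussian concentration inequality (as in \cite{GN15}, or the noise-term estimates underlying \cite{CP11, K11}), I expect $\|W_s\|_{op}=O(\sigma\sqrt{d/n})$ with probability $\ge 1-\delta/3$; absorbing $\sigma\le\sqrt v$ into the constant produces the term $\sqrt{d/n}$. This matrix-concentration step is the main obstacle, as it carries the genuine probabilistic content and supplies the dimensional factor $\sqrt{d}$.

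Combining the two bounds on the intersection of the RIP event and the noise event and adjusting constants gives $v_n=O(r_n(k)\tau_n(k)+\sqrt{d/n})$ with probability $\ge 1-\delta$. The only delicate points are verifying that the rank inflation from $k$ to $3k$ costs merely a constant factor in $\tau_n$, and that the matrix-concentration constant can be taken uniform in $d$ and $n$.
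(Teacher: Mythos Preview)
Your polarization argument for the first supremum is precisely the content of the paper's Lemma~\ref{pythagoras}; that part coincides with the paper.

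For the noise term the routes diverge. The paper does not invoke matrix concentration at all: it conditions on $\mathcal X$ (on the RIP event), views $U\mapsto n^{-1/2}\langle\mathcal X\mathbb U,\varepsilon\rangle$ as a Gaussian process on $\mathcal S$, bounds its intrinsic covariance metric by a constant times $\|U-\bar U\|$ via the RIP applied to the rank-$\le 2$ Hermitian matrix $\mathbb U-\bar{\mathbb U}$, and then uses Dudley's entropy integral with the standard $d\log(A/\delta)$ covering bound for the sphere, followed by Markov's inequality. This uses only the RIP and delivers the sharp $\sqrt{d/n}$.

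Your reduction $\sup_{U\in\mathcal S}\bigl|n^{-1}\langle\mathcal X\mathbb U,\varepsilon\rangle\bigr|=\|W_s\|_{op}$ is correct and pleasant, but completing it via a Tropp-type matrix Bernstein/Gaussian inequality has two costs. First, you must bound the matrix variance $\|\tfrac{1}{n}\sum_i (X^i_s)^2\|_{op}$; this does in fact follow from the RIP (write $\|X^iU\|^2=\sum_j \bigl(tr(X^i (Ue_j^T)_s)\bigr)^2$ and apply RIP to each rank-$\le 2$ symmetrised matrix, giving the bound $\lesssim d$), but it is an extra computation the paper's route avoids. Second, the non-commutative Khintchine/Bernstein inequality carries an unavoidable $\sqrt{\log d}$ factor, so you obtain $O(\sqrt{d\log d/n})$ rather than the stated $O(\sqrt{d/n})$; this is harmless for Theorem~\ref{main} (which tolerates poly-log factors) but strictly weaker than the lemma. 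The reason Dudley is sharper here is that the per-direction variance $\tfrac{\sigma^2}{n^2}\|\mathcal X\mathbb U\|^2\le\tfrac{\sigma^2(1+\tau_n(1))}{n}$ is much smaller than Tropp's matrix variance parameter. If you wish to keep your operator-norm reformulation and match the paper's bound, replace matrix Bernstein by a direct $\epsilon$-net on $\mathcal S$ combined with that RIP-controlled pointwise variance --- which is essentially a single-scale version of the paper's Dudley argument.
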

\begin{proof}
The first term in the bound corresponds to the first supremum on the right hand side of the last inequality, and follows directly from the matrix RIP (and Lemma \ref{pythagoras}). For the second term we argue conditionally on the values of $\mathcal X$ and on the event for which the matrix RIP is satisfied. We bound the supremum of the Gaussian process $$\mathbb G_\varepsilon(U) := \frac{1}{\sqrt n} \langle \mathcal X \mathbb U, \varepsilon\rangle  \sim N(0, \|\mathcal X \mathbb U\|^2/n)$$ indexed by elements $U$ of the unit sphere $\mathcal S$ of $\rr^d$, which satisfies the metric entropy bound $$\log N(\delta, \mathcal S, \|\cdot\|) \lesssim d \log (A/\delta)$$ by a standard covering argument. Moreover $\mathbb U = UU^T \in R(1)$ and hence for any pair of vectors $U, \bar U \in \mathcal S$ we have that $\mathbb U - \bar {\mathbb U} \in R(2)$. From the RIP we deduce for every fixed $U, \bar U \in \mathcal S$ that 
\begin{align*}
\frac{1}{n}\|\mathcal X \mathbb U- \mathcal X \bar{\mathbb  U}\|^2 &= \|\mathbb U - \bar{\mathbb U}\|_F^2 \left(1 + \frac{\frac{1}{n}\|\mathcal X (\mathbb U- \bar {\mathbb U})\|^2 - \|\mathbb U-\bar {\mathbb U}\|_F^2}{\|\mathbb U-\bar {\mathbb U}\|_F^2} \right) \\
&\le (1+\tau_n(2)) \|\mathbb U-\bar {\mathbb U}\|_F^2 \le C \|U- \bar U\|^2
\end{align*}
since $\tau_n(2) = O(1)$ and since 
\begin{align*}
\| \mathbb U-\bar {\mathbb U}\|_F^2 &= \sum_{m,m'} (U_mU_{m'}- \bar U_m \bar U_{m'})^2\\ 
&=  \sum_{m,m'} (U_mU_{m'}- U_m\bar U_{m'} + U_m \bar U_{m'} - \bar U_m \bar U_{m'})^2  \le 
2 \|U-\bar U\|^2.
\end{align*}
Hence any $\delta$-covering of $\mathcal S$ in $\|\cdot\|$ induces a $\delta/C$ covering of $\mathcal S$ in the intrinsic covariance $d_{\mathbb G_\varepsilon}$ of the (conditional on $\mathcal X$) Gaussian process $\mathbb G_\varepsilon$, i.e.,
$$\log N(\delta, \mathcal S, d_{\mathbb G_\varepsilon}) \lesssim d \log (A'/\delta)$$ with constants independent of $X$. By Dudley's metric entropy bound (e.g., ref.\ \cite{GN15}) applied to the conditional Gaussian process we have for $D>0$ some constant
$$\ee  \sup_{U \in \mathcal S} |\mathbb G_\varepsilon(U)| \lesssim \int_0^D \sqrt {\log N(\delta, \mathcal S, d_{\mathbb G_\varepsilon})} d \delta  \lesssim \sqrt d $$ and hence we deduce that
\begin{equation}
\ee _\varepsilon \sup_{U \in \mathcal S}\frac{1}{n}\left|\langle \mathcal X \mathbb U, \varepsilon\rangle \right| =   \ee _\varepsilon \frac{1}{\sqrt n}\sup_{U \in \mathcal S} |\mathbb G_\varepsilon(U)| \lesssim \sqrt{\frac{d}{n}}
\end{equation}
with constants independent of $X$, so that the result follows from applying Markov's inequality.
\end{proof}

%\medskip

\textbf{STEP II:}

Define the estimator $$\hat \theta' = \tilde \theta + \frac{1}{n} \sum_{i=1}^n X^i Y_i' = \tilde \theta + \tilde \gamma_\eta (I_d).$$ Then we can write, using $U^T \tilde \gamma_\eta(I_d)U = \tilde \gamma_\eta (U)$,
\begin{align*}
U^T \hat \theta' U - U^T \theta U & = U^T(\tilde \theta + \tilde \gamma_\eta(I_d))U - U^T(\tilde \theta + \eta)U \\
& = \tilde \gamma_\eta(U) - \gamma_\eta(U),
\end{align*}
and from the previous lemma we conclude, for any unit vector $U$ that with probability $\ge 1-\delta$,
\begin{equation*}
|U^T \hat \theta' U - U^T \theta U | \le v_n.
\end{equation*}
Let now $\hat \theta$ be any symmetric positive definite matrix such that 
\begin{equation*}
|U^T \hat \theta U - U^T \hat \theta' U | \le v_n.
\end{equation*}
Such a matrix exists, for instance $\theta \in R^+(k)$, and by the triangle inequality we also have
\begin{equation} \label{thetahat}
|U^T \hat \theta U - U^T \theta U | \le 2v_n.
\end{equation}

\begin{lemma}\label{pcaell1}
Let $M$ be a symmetric positive definite $d \times d$ matrix with eigenvalues $\lambda_j$'s ordered such that $\lambda_1 \ge \lambda_2 \ge ... \ge \lambda_d$. For any $j \le d$ consider an arbitrary collection of $j$ orthonormal vectors $\mathcal V_j = (V^\iota: 1 \le \iota \le j)$ in $\rr^d$. Then we have
$$a)~~ \lambda_{j+1} \le \sup_{U \in \mathcal S, U \perp span(\mathcal V_j)} U^TMU,$$ and
$$b)~~~ \sum_{\iota \le j} \lambda_\iota \ge \sum_{\iota \le j} (V^\iota)^T M V^\iota.$$
\end{lemma}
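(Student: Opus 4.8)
The plan is to diagonalise $M$ and reduce both inequalities to elementary statements about its spectrum. Write $M = \sum_{i=1}^d \lambda_i \phi_i \phi_i^T$ where $(\phi_i : 1 \le i \le d)$ is an orthonormal basis of eigenvectors with $M\phi_i = \lambda_i \phi_i$ and $\lambda_1 \ge \dots \ge \lambda_d \ge 0$. Neither argument actually uses positive definiteness beyond the ordering of the $\lambda_i$.

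For part a) I would argue by dimension counting. Let $E_{j+1} = \mathrm{span}(\phi_1, \dots, \phi_{j+1})$, of dimension $j+1$, and let $W = \mathrm{span}(\mathcal V_j)^\perp$, of dimension $d-j$. Since $\dim E_{j+1} + \dim W = (j+1)+(d-j) = d+1 > d$, the intersection $E_{j+1} \cap W$ contains a unit vector $U^\ast$. On the one hand $U^\ast \perp \mathrm{span}(\mathcal V_j)$; on the other hand, writing $U^\ast = \sum_{i=1}^{j+1} c_i \phi_i$ with $\sum_i c_i^2 = 1$, one gets $(U^\ast)^T M U^\ast = \sum_{i=1}^{j+1} \lambda_i c_i^2 \ge \lambda_{j+1}$ because $\lambda_i \ge \lambda_{j+1}$ for every $i \le j+1$. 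Hence the supremum of the Rayleigh quotient over unit vectors orthogonal to $\mathrm{span}(\mathcal V_j)$ is at least $\lambda_{j+1}$, which is the claim.

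For part b) I would expand in the eigenbasis. Setting $w_i = \sum_{\iota=1}^j \langle V^\iota, \phi_i\rangle^2$, orthonormality of the $\phi_i$ gives
$$\sum_{\iota=1}^j (V^\iota)^T M V^\iota = \sum_{\iota=1}^j \sum_{i=1}^d \lambda_i \langle V^\iota, \phi_i\rangle^2 = \sum_{i=1}^d \lambda_i w_i.$$
The key is to pin down the constraints on $(w_i)$: completing the orthonormal family $(V^\iota)_{\iota \le j}$ to a full orthonormal basis of $\rr^d$ and invoking Parseval shows $0 \le w_i \le 1$ for each $i$, while $\sum_{i=1}^d w_i = \sum_{\iota=1}^j \|V^\iota\|^2 = j$. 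Maximising the linear functional $\sum_i \lambda_i w_i$ over this truncated simplex places all mass on the largest eigenvalues; concretely, using $\sum_{i\le j}(1-w_i) = \sum_{i>j} w_i$ (which follows from $\sum_i w_i = j$) together with the monotonicity of the $\lambda_i$,
$$\sum_{i\le j}\lambda_i - \sum_{i=1}^d \lambda_i w_i = \sum_{i \le j}\lambda_i(1-w_i) - \sum_{i>j}\lambda_i w_i \ge \lambda_j\Big(\sum_{i\le j}(1-w_i) - \sum_{i>j}w_i\Big) = 0,$$
which yields $\sum_{\iota \le j}(V^\iota)^T M V^\iota \le \sum_{\iota \le j}\lambda_\iota$.

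The only genuinely delicate point is the per-coordinate bound $w_i \le 1$ in part b), which is why I complete $(V^\iota)_{\iota \le j}$ to a full basis before applying Parseval; everything else is a rearrangement estimate driven by the ordering of the eigenvalues. Part a) rests entirely on the observation that two subspaces whose dimensions sum to more than $d$ must intersect nontrivially, so it presents no real obstacle.
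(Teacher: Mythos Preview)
Your proof is correct and follows essentially the same route as the paper's: the dimension-counting intersection argument for part a) is identical, and for part b) both you and the paper expand in the eigenbasis, record the constraints $\sum_i w_i = j$ and $w_i \le 1$, and maximise the linear form $\sum_i \lambda_i w_i$. Your treatment of part b) is in fact slightly more explicit than the paper's, which simply asserts where the maximum is attained without writing out the rearrangement inequality.
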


The proof of this lemma is basic and given in the appendix. Let now $\hat R$ be the rotation that diagonalises $\hat \theta$ such that $\hat R^T \hat \theta \hat R = diag(\hat \lambda_j: j =1, \dots, d)$ ordered such that $\hat \lambda_j \ge \hat \lambda _{j+1}$ $\forall j$. Moreover let $R$ be the rotation that does the same for $\theta$ and its eigenvalues $\lambda_j$. We apply the previous lemma with $M= \hat \theta$ and $\mathcal V$ equal to the column vectors $r_\iota: \iota \le l-1$ of $R$ to obtain, for any fixed $l \le j \le d$,
\begin{equation}
\hat \lambda_l \le \sup_{U \in \mathcal S, U \perp span(r_\iota, \iota \le l-1)} U^T \hat \theta U,
\end{equation}
and also that
\begin{equation}
\sum_{l \le j} \hat \lambda_l \ge \sum_{l \le j} r_{l}^T \hat \theta r_{l}.
\end{equation}
From (\ref{thetahat}) we deduce, that
$$\hat \lambda_l \le \sup_{U \in \mathcal S, U \perp span(r_\iota, \iota \le j-1)} U^T \theta U + 2 v_n = \lambda_j +2v_n ~~~\forall ~l \le j,$$
as well as
$$\sum_{l \le j} \hat \lambda_l \ge \sum_{l \le j} r_{l}^T \theta r_{l} - 2jv_n = \sum_{l \le j} \lambda_l - 2jv_n,$$ with probability $\ge 1-\delta$. Combining these bounds we obtain
\begin{equation} \label{evbd}
\left| \sum_{l \le j} \hat \lambda_l - \sum_{l \le j} \lambda_l \right| \le 2jv_n,~~~j \le d.
\end{equation}

%\medskip

\textbf{STEP III}

We show that the confidence sets covers the true parameter on the event of probability $\ge 1-\delta$ on which Steps I and II are valid, and for the constant $C$ chosen large enough.

%\smallskip

Let $\Pi = \Pi_{R^+(2\hat k)}$ be the projection operator onto $R^+(2 \hat k)$. We have $$\|\hat \vartheta - \theta\|_{S_1} \le \|\hat \vartheta - \Pi \theta\|_{S_1} + \|\Pi \theta - \theta\|_{S_1}.$$ We have, using (\ref{evbd}) and Lemma \ref{lem:mati2} below
\begin{align*}
\|\Pi \theta - \theta\|_{S_1} &= \sum_{J > 2\hat k} \lambda_J = 1- \sum_{J \le 2 \hat k} \lambda_J \\
& \le 1- \sum_{J \le 2 \hat k} \hat \lambda_J + 4 \hat k v_n \\
& \le 6 v_n \hat k \le (C/2) \sqrt{\hat k} r_n(\hat k)
\end{align*}
for $C$ large enough.

Moreover, using the oracle inequality (\ref{oracle}) with $S=\Pi\theta$ and (\ref{projk}),
\begin{align*}
\|\hat \vartheta - \Pi \theta\|_{S_1} &\le \sqrt{4 \hat k} \|\hat \vartheta - \Pi \theta\|_{F} \\
& \le \sqrt{4 \hat k} ( \|\hat \vartheta - \theta\|_{F} +  \|\Pi \theta -\theta\|_{F})\\
& \le \sqrt{4 \hat k} (\|\hat \vartheta - \tilde \theta_{\rm Pilot}\|_{F} + \|\tilde \theta_{\rm Pilot} - \theta\|_{F} +  \|\Pi \theta -\theta\|_{F})\\
& \lesssim \sqrt{\hat k} (r_n(\hat k) +  \|\Pi \theta -\theta\|_{F}).
\end{align*}
We finally deal with the approximation error: Note
$$\|\Pi\theta - \theta \|_{F}^2 = \sum_{l > 2 \hat k} \lambda_l^2 \le \max_{l > 2 \hat k} |\lambda_l| \sum_{l>2 \hat k} |\lambda_l|.
$$
By (\ref{evbd}) we know that $$\sum_{l >  \hat k} \lambda_l  = 1- \sum_{l \le \hat k} \lambda_l \le  1- \sum_{l \le \hat k} \hat \lambda_l + 2 v_n \hat k  \le 4 v_n \hat k.$$ Hence out of the $\lambda_l$'s with indices $l >\hat k$ there have to be less than $\hat k$ coefficients which exceed $4 v_n$. Since the eigenvalues are ordered this implies that the $\lambda_l$'s with indices $l>2 \hat k$ are all less than or equal to $4 v_n$, and hence the quantity in the last but one display is bounded by (since $\hat k < 2 \hat k$), using again (\ref{evbd}) and the definition of $\hat k$,
$$4 v_n  \left(1-\sum_{l\le\hat k} |\lambda_l|\right)  \lesssim v_n \left(1-\sum_{l\le\hat k} |\hat \lambda_l| \right) + \hat k v^2_n \lesssim v_n^2 \hat k \lesssim \sqrt {\hat k} r_n(\hat k).$$ Overall we get the bound 
$$\|\hat \vartheta - \Pi \theta\|_{S_1} \lesssim \hat k v_n \lesssim (C/2) \sqrt{\hat k} r_n(\hat k) $$
for $C$ large enough, which completes the proof of coverage of $C_n$ by collecting the above bounds. The diameter bound follows from $\hat k \le k$ (in view of the defining inequalities of $\hat k$ being satisfied, for instance, for $\tilde \theta ' = \theta$, whenever $\theta \in R^+(k_0)$.)

We conclude with the following auxiliary results used above.

\begin{lemma} \label{pythagoras}
Under the RIP (\ref{RIP}) we have for every $1\le k \le d$ that, with probability at least $1- \delta$,
\begin{equation}
\sup_{A, B \in R(k)} \left|\frac{\frac{1}{n}\langle \mathcal XA, \mathcal XB \rangle - \langle A, B \rangle_{F}
 }{\|A\|_F\|B\|_F}\right| \le 10 \tau_n(k).
\end{equation}
\end{lemma}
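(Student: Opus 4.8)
The plan is to derive this bilinear form of the restricted isometry property from its quadratic version (\ref{RIP}) by a standard polarisation argument, working throughout on the single event of probability at least $1-\delta$ on which (\ref{RIP}) holds simultaneously for all matrices of rank at most $2k$. Since the ratio inside the supremum is invariant under the rescalings $A \mapsto sA$, $B\mapsto tB$ for any $s,t>0$, I would first reduce, without loss of generality, to the normalised case $\|A\|_F = \|B\|_F = 1$, so that the denominator equals $1$ and it suffices to bound $\bigl|\tfrac1n\langle \mathcal X A, \mathcal X B\rangle - \langle A, B\rangle_F\bigr|$ for unit-norm $A,B \in R(k)$.

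Next I would apply polarisation to both the empirical and the population inner products. Writing $\Delta(M) := \tfrac1n\|\mathcal X M\|^2 - \|M\|_F^2$, the identities $\langle A,B\rangle_F = \tfrac14(\|A+B\|_F^2 - \|A-B\|_F^2)$ and $\tfrac1n\langle \mathcal X A,\mathcal X B\rangle = \tfrac14(\tfrac1n\|\mathcal X(A+B)\|^2 - \tfrac1n\|\mathcal X(A-B)\|^2)$ give, upon subtraction,
$$
\frac{1}{n}\langle \mathcal X A, \mathcal X B\rangle - \langle A, B\rangle_F = \frac{1}{4}\Bigl[\Delta(A+B) - \Delta(A-B)\Bigr].
$$
The decisive observation is that $A,B \in R(k)$ force $A\pm B \in R(2k)$, so I may apply (\ref{RIP}) at rank $2k$ to each summand to get $|\Delta(A\pm B)| \le \tau_n(2k)\,\|A\pm B\|_F^2$. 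Combining this with the parallelogram law $\|A+B\|_F^2 + \|A-B\|_F^2 = 2(\|A\|_F^2 + \|B\|_F^2) = 4$ bounds the displayed difference by $\tfrac14\,\tau_n(2k)\cdot 4 = \tau_n(2k)$. Finally, the explicit scaling (\ref{tau}) gives $\tau_n(2k) = \sqrt2\,\tau_n(k) \le 10\,\tau_n(k)$, which yields the claim; the generous constant $10$ leaves ample slack.

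No extra union bound is needed for the supremum over $A,B$, since (\ref{RIP}) is itself a uniform statement over the entire rank class and holds on one event of probability $\ge 1-\delta$; conditioning on that event makes the whole derivation deterministic. The only genuine subtlety, and hence the step I would flag as the main obstacle, is the \emph{rank doubling}: polarisation inevitably produces matrices of rank up to $2k$ rather than $k$, so one must invoke the RIP at level $2k$ and control $\tau_n(2k)$ in terms of $\tau_n(k)$ via (\ref{tau}). Everything else is routine bookkeeping of constants.
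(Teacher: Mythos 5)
Your proof is correct, and it takes a genuinely different route from the paper's. The paper's argument is a one-liner: it rewrites the quadratic RIP as a bound on $|\langle A,(n^{-1}M-\id)A\rangle_F|/\langle A,A\rangle_F$ for a suitable Hermitian operator $M$ on $\hh_{d^2}(\cc)$, expresses the quantity of interest as the bilinear form $\langle A,(n^{-1}M-\id)B\rangle_F$, and invokes ``the Cauchy--Schwarz inequality.'' Your polarisation argument is the standard way to make such a step rigorous: since $n^{-1}M-\id$ is indefinite and the quadratic bound is only available on the cone $R(k)$ (not a linear subspace), a literal Cauchy--Schwarz for the bilinear form is not immediately available, whereas writing $\tfrac1n\langle\mathcal XA,\mathcal XB\rangle-\langle A,B\rangle_F=\tfrac14[\Delta(A+B)-\Delta(A-B)]$ and using the parallelogram law reduces everything to the quadratic RIP at rank $2k$. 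You correctly identify the rank doubling as the one real cost: you need the RIP on $R(2k)$ rather than $R(k)$, which is consistent with how the paper uses the RIP elsewhere (the scaling (\ref{tau}) gives $\tau_n(2k)=\sqrt2\,\tau_n(k)$, and for $2k>d$ one simply reads $R(2k)=R(d)$). Your route in fact yields the constant $\sqrt2$ rather than $10$, so the stated bound holds with room to spare; the paper's generous constant presumably absorbs exactly this kind of slack. In short, your proof is more explicit and self-contained than the paper's, at the mild price of invoking the RIP at the doubled rank.
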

\begin{proof}
The matrix RIP can be written as
\begin{equation}
	\sup_{A\in R(k)}
		\left| \frac{\langle \mathcal XA, \mathcal XA \rangle}{n \langle A,A\rangle_F} -1 \right|
	=\frac{|\langle  A , (n^{-1} M- \id) A \rangle_F|}{\langle A,A\rangle_F}\leq \tau_n(k),
\end{equation}
for a suitable $M\in \hh_{d^2}(\cc)$. The above bound then follows from applying
the Cauchy-Schwarz inequality to
\begin{equation}
	\frac{1}{n}\langle \mathcal XA, \mathcal XB \rangle - \langle A, B \rangle_F 
	=\langle  A , (n^{-1} M- \id) B \rangle_F .
\end{equation}

%
%The result follows from elementary properties of Hilbert space.

\end{proof}

%\medskip

The proof of the following basic lemma is left to the reader.

%\begin{lemma}\label{lem:mati2}
%Let $M$ be a symmetric and positive semi-definite matrix, with positive eigenvalues $(\lambda_j)_j$ ordered in decreasing order. %Let $R^+(j-1)= R(j-1) \cap \Theta_+$. Then for any $2 \leq j \leq d$ we have
%$$\sum_{j'\geq j} \lambda_{j'} =  \|M - R^+(j-1)\|_{S_1}.$$
%\end{lemma}
%
\begin{lemma}\label{lem:mati2}
Let $M\geq 0$ with positive eigenvalues $(\lambda_j)_j$ ordered in decreasing order. 
Denote with $\Pi_{R^+(j-1)}$ 
the projection onto $R^+(j-1)= R(j-1) \cap \Theta_+$. Then for any $2 \leq j \leq d$ we have
$$\sum_{j'\geq j} \lambda_{j'} =  \|M - \Pi_{R^+(j-1)} M\|_{S_1}.$$
\end{lemma}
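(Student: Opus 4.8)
The plan is to reduce the claim to the Eckart--Young--Mirsky theorem together with the observation that positive semidefiniteness is preserved under spectral truncation. First I would fix a spectral decomposition $M = \sum_{l=1}^d \lambda_l v_l v_l^T$, with $(v_l)$ an orthonormal eigenbasis and $\lambda_1 \ge \dots \ge \lambda_d \ge 0$, and set $P = \sum_{l \le j-1} \lambda_l v_l v_l^T$, the truncation of $M$ to its $j-1$ largest eigenvalues. Then $P$ is manifestly of rank at most $j-1$ and, being a nonnegative combination of the rank-one projectors $v_l v_l^T$, it is positive semidefinite; hence $P \in R^+(j-1)$. Note that it is exactly the positive semidefiniteness of $M$ (and not any normalisation) that makes this truncation feasible.

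The central step is to identify $P$ with the Frobenius projection $\Pi_{R^+(j-1)}M$. Here I would use that $R^+(j-1) \subseteq R(j-1)$, so that
$$\min_{N \in R^+(j-1)} \|M - N\|_F \ge \min_{N \in R(j-1)} \|M - N\|_F = \|M - P\|_F,$$
where the last equality is the Eckart--Young--Mirsky characterisation of the best rank-$(j-1)$ Frobenius approximation of $M$. Since $P$ belongs to the smaller feasible set $R^+(j-1)$ and already attains this lower bound, it is optimal over $R^+(j-1)$ as well, so $\Pi_{R^+(j-1)}M = P$. If some $\lambda_l$ are repeated the minimiser need not be unique, but every minimiser retains the same multiset of top eigenvalues, so the residual eigenvalues, and hence the nuclear-norm value computed next, are unaffected.

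It then remains to compute the residual. By construction $M - P = \sum_{l \ge j} \lambda_l v_l v_l^T$ is symmetric with eigenvalues $\{\lambda_l : l \ge j\}$ (together with zeros), all of which are nonnegative. Consequently its nuclear norm is simply the sum of these eigenvalues,
$$\|M - \Pi_{R^+(j-1)} M\|_{S_1} = \|M - P\|_{S_1} = \sum_{l \ge j} |\lambda_l| = \sum_{j' \ge j} \lambda_{j'},$$
which is the asserted identity.

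I expect the only point requiring genuine care to be the optimality argument in the second paragraph: one must check that restricting the minimisation to the positive semidefinite rank-$(j-1)$ matrices does not lower the optimal value below the unconstrained Eckart--Young value, and this is precisely where the fact that the best rank-$(j-1)$ approximation of a positive semidefinite matrix is itself positive semidefinite does the work. Everything else is a direct spectral computation, the key structural input being $M \ge 0$, so that the tail eigenvalues enter the nuclear norm with a positive sign and no cancellation occurs.
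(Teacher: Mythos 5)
The paper leaves this proof to the reader, and your Eckart--Young--Mirsky route is the natural way to supply it: the chain $\min_{N\in R^+(j-1)}\|M-N\|_F\ge\min_{N\in R(j-1)}\|M-N\|_F=\|M-P\|_F$, the membership of the truncation $P=\sum_{l\le j-1}\lambda_l v_lv_l^T$ in the constraint set, and the final spectral computation of $\|M-P\|_{S_1}$ are all correct in spirit. There is, however, one step that fails as written: the claim that $P\in R^+(j-1)$. In this paper $\Theta_+=\{\theta:\ tr(\theta)=1,\ \theta\succeq 0\}$ carries a trace normalisation, so $R^+(j-1)=R(j-1)\cap\Theta_+$ consists of positive semi-definite rank-$\le(j-1)$ matrices of trace \emph{exactly one}, whereas $tr(P)=\sum_{l\le j-1}\lambda_l$, which equals one only when the tail eigenvalues vanish. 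Positive semidefiniteness alone does not place $P$ in $R^+(j-1)$, so the identification $\Pi_{R^+(j-1)}M=P$ is not justified under the literal reading. Worse, under that literal reading the identity itself changes: if $tr(M)=1$, the Frobenius projection onto the trace-one constrained set lifts the top $j-1$ eigenvalues by $c=\bigl(\sum_{l\ge j}\lambda_l\bigr)/(j-1)$, the residual is $-c\sum_{l\le j-1}v_lv_l^T+\sum_{l\ge j}\lambda_l v_lv_l^T$, and its nuclear norm equals $2\sum_{j'\ge j}\lambda_{j'}$ --- off by a factor of two.

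The resolution is that the lemma is evidently intended with $\Pi_{R^+(j-1)}M$ denoting the spectral truncation, i.e.\ the projection onto the positive semi-definite rank-constrained cone with the trace normalisation dropped; this is the only reading consistent with Step III of the proof of Theorem~\ref{main}, where both $\|\Pi\theta-\theta\|_{S_1}=\sum_{J>2\hat k}\lambda_J$ and $\|\Pi\theta-\theta\|_F^2=\sum_{l>2\hat k}\lambda_l^2$ are used. With that reading your argument is complete: $P$ is positive semi-definite of rank at most $j-1$, it attains the unconstrained Eckart--Young bound (so a fortiori the bound over the smaller feasible set), and $M-P\succeq 0$ has eigenvalues $(\lambda_l)_{l\ge j}$, so no cancellation occurs in the nuclear norm; your remark on non-uniqueness under repeated eigenvalues is also fine, since every minimiser leaves a residual with the same spectrum. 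You should either state this reinterpretation explicitly or carry out the trace-constrained projection and track the resulting factor of two through Step III, where it is harmless for the $\lesssim$ bounds.
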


\textbf{Acknowledgements.} This work has been supported by the EU (SIQS, RAQUEL), the ERC (TAQ, UQMSI) and the DFG (SPP1798, MuSyAd Emmy Noether grant). AC worked on this project while a postdoc at the University of Cambridge. We also acknowledge discussions with C. Riofrio.

\section{Appendix}

\subsection{Pauli spin measurements \& Quantum Tomography}

This work was partly motivated by a problem arising in present-day
physics experiments that aim at estimating quantum states. Conceptually, a quantum mechanical experiment involves two stages: A \emph{source} 
(or \emph{preparation procedure}) that emits quantum mechanical
systems with unknown properties, and a \emph{measurement device} that
interacts with incoming quantum systems and produces real-valued
measurement outcomes, e.g.\ by pointing a dial to a value on a scale.
Quantum mechanics stipulates that both stages are completely described
by certain matrices. 

\smallskip
The properties of the source are represented by a positive
semi-definite unit trace matrix $\theta$, the \emph{quantum state},
also referred to as 
\emph{density matrix}. 
In turn, the measurement device is modelled by a Hermitian
matrix $X$, which is referred to as an \emph{observable} in physics
jargon. 
A key axiom of the quantum mechanical formalism states that if the
measurement $X$ is repeatedly performed on systems emitted by the
source that is preparing $\theta$,
%in an i.i.d.\ fashion, 
% dg: id' avoid i.i.d. here, as ``distributed'' really only applies to
% classical RVs, which we don't have here
then the real-valued measurement outcomes will fluctuate
randomly with expected value
% dg: not italitc, as we are not defining it. it's the classical
% notion. 
\begin{equation}
	\langle X, \theta\rangle_F= {tr} (X \theta).
\end{equation}
The precise way in which physical properties are represented by these
matrices is immaterial to our discussion (cf.\ any textbook, e.g.\
ref.~\cite{peres}). 
We merely note that, while in principle \emph{any} Hermitian $X$ can
be measured by some physical apparatus, the required experimental
procedures are prohibitively complicated for all but a few highly
structured matrices. This motivates the introduction of \emph{Pauli
designs} below, which correspond to fairly tractable `spin
measurements'.

\smallskip

The \emph{quantum state estimation} or \emph{quantum
tomography}\footnote{
	The term `tomography' goes back to the use of \emph{Radon
	transforms} in early schemes for estimating quantum states of
	electromagnetic fields \cite{leonhardt,Gill}. It has become synonymous
	with `quantum density matrix estimation', even though current
	methods applied to quantum systems with a finite dimension $d$
	have no technical connection to classical tomographic
	reconstruction algorithms.
}
problem is to estimate an unknown density matrix $\theta$ from the
measurement of a collection of observables $X^1, \dots, X^n$. This task is of
particular importance to the young field of quantum information
science \cite{nielsen}. There, the sources might be carefully
engineered components used for technological applications such as
quantum key distribution or quantum computing. In this context,
quantum state estimation is the process of characterising the
components one has built -- clearly an important capability for any
technology. 

\smallskip

A major challenge lies in the fact that relevant instances
are described by $d \times d$-matrices for fairly large dimensions
$d$ ranging from 100 to 10.000 in presently performed experiments \cite{Blatt}. 
Such high-dimensional estimation problems can benefit substantially
from structural properties of the objects to be recovered. 
Fortunately, the density matrices occurring in quantum information
experiments are typically well-approximated by matrices of \emph{low
rank} $r \ll d$. 
In fact, in the practically most important applications, one usually
even aims at preparing a state of unit rank -- a so-called \emph{pure
quantum state}.

\subsubsection{Pauli observables}\label{qmeas}

We now introduce a paradigmatic set of quantum measurements that is
frequently used in both theoretical and practical treatments of
quantum state estimation (see, e.g., refs.\ \cite{GLFBE10,Blatt}).  For a more
general account, we refer to standard textbooks \cite{holevo2001statistical,nielsen}. 
The purpose of this section is to motivate the `Pauli design' case
(Condition \ref{design}b) of the main theorem, as well as the approximate Gaussian noise model described in Subsection \ref{bernoulli}. 

We start by describing `spin measurements' on a single
`spin-$1/2$ particle'. Such a measurement corresponds 
to the situation of having $d=2$.  Without worrying about the physical significance,
we accept as fact that on such particles, one may measure one of
three properties, referred to as the `spin along the $x, y$, or
$z$-axis' of $\rr^3$. 
Each of these measurements may yield one of two outcomes, denoted by
$+1$ and $-1$ respectively.

The mathematical description of these measurements is derived from the
\emph{Pauli matrices}
%The choice of $x_k$ for the individual spin labelled $k=1,\dots, N$ 
%corresponds to the specific Pauli spin matrix
%that is being measured in the experiment, i.e., 
%which of the familiar Hermitian matrices
\begin{equation}
	\sigma^1= \left[
	\begin{array}{cc}
	0 & 1\\
	1 & 0
	\end{array}
	\right],\,
	\sigma^2=\left[
	\begin{array}{cc}
	0 & -i\\
	i & 0
	\end{array}
	\right],\,
	\sigma^3 =\left[
	\begin{array}{cc}
	1 &0\\
	0 & -1
	\end{array}
	\right]
\end{equation}	
in the following way. 
%
%
%from $\mm_2(\cc)$ is taken.
Recall that
the Pauli matrices have eigenvalues $\pm 1$. 
For $x\in\{1,2,3\}$ and
$j\in\{+1, -1\}$,
we write
$\psi_j^x$
for the 
normalised
eigenvector of $\sigma^x$ with eigenvalue $j$.
The spectral decomposition of each Pauli spin matrix can hence
be expressed as
\begin{equation}\label{eqn:spectral}
	\sigma^x = \pi^{x}_+ - \pi^{x}_-,
\end{equation}
with 
\begin{equation}
	\pi^{x}_{\pm} = \psi^{x}_{\pm} (\psi^{y}_{\pm})^\ast\geq 0
\end{equation}
denoting the projectors onto the eigenspaces. 
Now, a physical measurement of
the `spin along direction $x$' 
on a system in state $\theta$
will give rise to a $\{-1, 1\}$-valued
random variable $C^x$ with
\begin{equation}
	\pp(C^x = j) =  tr\left(
		\pi^{x}_j \theta
		%\pi^{x_1}_{j_1} \otimes \dots \otimes  \pi^{x_N}_{j_N}
	\right),
\end{equation}
where $\theta\in \hh_2(\cc)$.
Using eq.~(\ref{eqn:spectral}), this is equivalent to
stating that the expected value of $C^x$ is given by
\begin{equation}\label{eqn:singlepauli}
	\ee(C^x) =  tr\left(
		\sigma^x
		\theta
		%\pi^{x_1}_{j_1} \otimes \dots \otimes  \pi^{x_N}_{j_N}
	\right).
\end{equation}

\smallskip
Next, we consider the case of joint spin measurements on a collection
of $N$ particles. For each, one has to decide on an axis for the spin
measurement. Thus, the joint \emph{measurement setting} is now
described by a word $x=(x_1,\dots, x_N)\in  \{1,2,3\}^N$. 
The axioms of quantum mechanics posit that the
 joint state $\theta$ of the $N$ particles acts on 
the tensor product space $(\cc^2)^{\otimes N}$,
so that $\theta\in \hh_{2^N}(\cc)$.
\smallskip

Likewise,
the \emph{measurement outcome} is a word $j=(j_1, \dots, j_N)\in\{1,
-1\}^N$, with $j_i$ the value of the spin along axis $x_i$ of particle
$i=1,\dots, N$. As above, this prescription gives rise to a $\{1,-1\}^N$-valued
random variable $C^x$. Again, the axioms of quantum mechanics 
imply that the
distribution of $C^x$ is given by
\begin{equation}\label{eqn:multispectral}
	\pp(C^x = j) =  tr\left((\pi^{x_1}_{j_1} \otimes \dots \otimes  \pi^{x_N}_{j_N}) \theta
	\right).
\end{equation}
Note that the components of the random vector $C^x$ are not
necessarily independent, as $\theta$ will generally not factorise
%(this is linked to the oft-quoted phenomenon of \emph{entanglement} \cite{nielsen} which is, however, not relevant to the problems we are interested in here).

\smallskip
It is often convenient to express the 
information in eq.~(\ref{eqn:multispectral}) in a way that involves
tensor products of Pauli matrices, rather than their spectral
projections. In other words, we seek a generalisation of eq.\
(\ref{eqn:singlepauli}) to $N$ particles. As a first step toward this
goal, let 
\begin{eqnarray}
	\chi(j) = 
	\left\{
		\begin{array}{ll}
			-1\qquad&\text{number of $-1$ elements in $j$ is odd} \\
			1\qquad&\text{number of $-1$ elements in $j$ is even}
		\end{array}
	\right.
\end{eqnarray}
be the \emph{parity function}. Then one easily verifies
\begin{equation} 
	tr ((\sigma^{x_1} \otimes \dots \otimes \sigma^{x_N} )\theta)
	=
	\sum_{j\in\{1,-1\}^N} 
	\chi(j)
	\,
%	\pp(C^x=j)
	tr
	\left(
	\theta
		(
		\pi^{x_1}_{j_1} \otimes \dots \otimes  \pi^{x_N}_{j_N}
		)
	\right)
	= \ee\big(\chi(C^x)\big).
\end{equation}
In this sense, the tensor product 
%of Pauli matrices
$\sigma^{x_1}\otimes\dots\otimes\sigma^{x_N}$ describes a
measurement of the parity of the spins along the 
respective directions given by $x$. 

\smallskip
In fact, the entire distribution of $C^x$ can be expressed in terms of
tensor products of Pauli matrices and suitable parity functions. To
this end, we extend the definitions above. Write
\begin{equation}
	\sigma^0=\left[
	\begin{array}{cc}
	1 &0\\
	0 & 1
	\end{array}
	\right]
\end{equation}
for the identity matrix in $\mm_2(\cc)$. For every subset
$S$ of $\{1, \dots, N\}$, define the `parity function restricted to
$S$' via
\begin{eqnarray}
	\chi_S(j) = 
	\Big\{
		\begin{array}{ll}
			-1\qquad&\text{number of $-1$ elements $j_i$ for $i\in S$ is odd} \\
			1\qquad&\text{number of $-1$ elements $j_i$ for $i\in S$ is even}. 
		\end{array}
	%\right
\end{eqnarray}
Lastly, for $S\subset \{1, \dots, N\}$ and $x\in\{1,2,3\}^N$,
the \emph{restriction of $x$ to $S$} is 
\begin{eqnarray}
	x^S_i = 
	\left\{
		\begin{array}{ll}
			x_i\qquad& i \in S \\
			0\qquad& i \not\in S.
		\end{array}
	\right.
%	\subset \{ 0, 1, 2, 3\}^N.
\end{eqnarray}
Then for every such $x, S$ one verifies the identity
\begin{equation}\label{expsam}
	tr ((\sigma^{x^S_1} \otimes \dots \otimes \sigma^{x^S_N})\theta)
	=
	\ee\big( \chi_S(C^x) \big).
\end{equation}
In other words, the distribution of $C^x$ 
contains enough 
information to compute the
expectation value of all observables
$(\sigma^{x^S_1} \otimes \dots \otimes \sigma^{x^S_N})$ that can be
obtained by replacing the Pauli matrices on an  arbitrary subset $S$
of particles by the identity $\sigma^0$.
The converse is also true: the set of all such expectation values
allows one to recover the distribution of $C^x$. The explicit formula
reads
\begin{align}\label{eqn:invft}
	\pp( C^x = j ) 
	&=
	\frac{1}{2^N}\, \sum_{S \subset \{1, \dots, N\}} 
		\chi_S(j)\,
		\ee\big( \chi_S (C^x) \big) \\
	&=
	\frac{1}{2^N}\, \sum_{S \in \{1, \dots, N\}} 
		\chi_S(j)\,
		tr \big(\theta (
			\sigma^{x_1^S} \otimes \dots \otimes \sigma^{x_N^S}
			)
		\big) \notag
\end{align}
and can be verified by direct computation. [Note that 
$\ee\big(\chi_S(C^x)\big)$ is effectively a Fourier coefficient
(over the group $\mathbb{Z}_2^N$) of
the distribution function of the $\{-1,1\}^N$-valued random variable $C^x$.
Equation~(\ref{eqn:invft}) is then nothing but an inverse Fourier
transform.]

\smallskip
In this sense, the information obtainable from joint spin measurements on
$N$ particles can be encoded in the $4^N$ real numbers
\begin{equation}\label{eqn:pauliexpansion}
	2^{-N/2}\,tr ((\sigma^{y_1} \otimes \dots \otimes \sigma^{y_N})\theta),
	\qquad
	y \in \{ 0, 1, 2, 3 \}^N.
\end{equation}
Indeed, every such $y$ arises as $y=x^S$ for some (generally
non-unique) combination of $x$ and $S$.
This representation is particularly convenient from a mathematical
point of view, as the collection of matrices
\begin{equation} \label{pauldef}
	E^y := 2^{-N/2}\sigma^{y_1}\otimes \dots \otimes  \sigma^{y_N},
	\qquad
	y \in \{ 0, 1, 2, 3 \}^N
\end{equation}
forms an ortho-normal basis with respect to the $\langle \cdot, \cdot \rangle_F$ inner
product. Thus the terms in eq.~(\ref{eqn:pauliexpansion}) are just the
coefficients of a basis expansion of the density matrix $\theta$.\footnote{We note that quantum mechanics allows to 
design measurement devices that directly probe the observable 
of $\sigma^{y_1} \otimes\dots\otimes \sigma^{y_N}$, without first
measuring the spin of every particle and then computing a parity
function. In
fact, the ability to perform such correlation measurements is crucial
for \emph{quantum error correction protocols} \cite{nielsen}. For 
practical reasons 
these setups are used less commonly in tomography experiments, though.}

\subsubsection{From (\ref{eqn:pauliexpansion}) to Condition \ref{design}b)}

Following \cite{FGLE12} we use eq.~(\ref{eqn:pauliexpansion}) as our model
for quantum tomographic measurements. Note that the $E^y$ satisfy
Condition \ref{design}b) with coherence constant $K=1$ and $d=2^N$.  In the model (\ref{model}) under Condition \ref{design}b) we wish to
approximate $d \cdot tr(E^y \theta)$ for a fixed observable $E^y$ (we
fix the random values of the $X^i$'s here) and for $d=2^N$.  
If $y=x^S$ for some setting $x$ and subset $S$, then the parity
function $B^y:=\chi_S(C^x)$
has expected value $2^{N/2} \cdot tr(E^y \theta)=\sqrt d \cdot
tr(E^y \theta)$ (see eqs.\ (\ref{expsam}) and (\ref{pauldef})), and
itself is a Bernoulli variable taking values $\{1,-1\}$ with
$$p=\pp(B^y=1)=\frac{1+\sqrt d tr(E^y \theta)}{2}.$$ 
Note that 
$$\sqrt d |tr(E^y \theta)| \le \sqrt d \|E^y\|_{op} \|\theta\|_{S_1} \le 1,$$
so indeed $p \in [0,1]$ and the variance satisfies 
$${\rm Var}B^y = 1 - d \cdot tr(E^y \theta)^2 \le 1.$$ 
This is precisely the error model described in Subsection \ref{bernoulli}.

\subsection{Proof of Lemma \ref{pcaell1}}

{\textbf{a):}} Consider the subspaces $E = span((V^{\iota})_{\iota \leq j})^{\perp}$ and $F = span((e_{\iota})_{\iota \leq j+1})$ of $\rr^d$, where the $e_\iota$'s are the eigenvectors of the $d \times d$ matrix $M$ corresponding to eigenvalues $\lambda_j$. Since $\mathrm{dim}(E) + \mathrm{dim}(F) = (d-j) + j+1 = d+1$, we know that $E \bigcap F$ is not empty and there is a vectorial sub-space of dimension $1$ in the intersection. Take $U  \in E \bigcap F$ such that $\|U\| = 1$. Since $U \in F$, it can be written as
$$U = \sum_{\iota=1}^{j+1} u_{\iota} e_{\iota}$$ for some coefficients $u_\iota$. Since the $e_{\iota}$'s are orthogonal eigenvectors of the symmetric matrix $M$ we necessarily have
$$MU = \sum_{\iota=1}^{j+1} \lambda_{\iota} u_{\iota} e_{\iota},$$
and thus
$$U^T MU = \sum_{\iota=1}^{j+1} \lambda_{\iota} u_{\iota}^2.$$
Since the $\lambda_{\iota}$'s are all non-negative and ordered in decreasing absolute value, one has 
$$U^T MU =   \sum_{\iota=1}^{j+1} \lambda_{\iota} u_{\iota}^2 \geq  \lambda_{j+1}  \sum_{\iota=1}^{j+1} u_{\iota}^2 = \lambda_{j+1} \|U\|^2 = \lambda_{j+1}.$$ Taking the supremum in $U$ yields the result.

{\textbf{b):}} For each $\iota \leq j$, let us write the decomposition of $V^{\iota} $ on the basis of eigenvectors $(e_{l}: l\le d)$ of $M$ as $$V^{\iota} = \sum_{l \leq d} v^{\iota}_l e_{l}.$$
Since the $(e_{l})$ are the eigenvectors of $M$ we have
$$\sum_{\iota \leq j} (V^{\iota})^T M V^{\iota} =  \sum_{\iota \leq j}  \sum_{l=1}^d \lambda_{l} (v^{\iota}_l)^2,$$
where $\sum_{l=1}^d (v^{\iota}_l)^2 = 1$ and $\sum_{\iota \leq j} (v^{\iota}_l)^2 \leq 1$, since the $V^\iota$ are orthonormal. The last expression is maximised in $(v^{\iota}_l)_{\iota \leq j, 1 \le l \leq d}$ and under these constraints, when $v^{\iota}_{\iota} = 1$ and $v^{\iota}_{l} = 0$ if $\iota \neq l$ (since the $(\lambda_{\iota})$ are in decreasing order), and this gives
$$\sum_{\iota \leq j} (V^{\iota})^T M V^{\iota}  \leq \sum_{\iota \leq j} \lambda_{\iota}.$$


\begin{thebibliography}{10}

\bibitem{Gill}
L.~Artiles, R.~Gill, and M.~Guta.
\newblock An invitation to quantum tomography.
\newblock {\em J. Roy. Statist. Soc.}, 67:109, 2005.

\bibitem{audenaert2009quantum}
K.~M.~R. Audenaert and S.~Scheel.
\newblock {Quantum tomographic reconstruction with error bars: a Kalman filter
  approach}.
\newblock {\em New J. Phys.}, 11(2):023028, 2009.

\bibitem{Robin}
R.~Blume-Kohout.
\newblock Robust error bars for quantum tomography, 2012.
\newblock arXiv:1202.5270.

\bibitem{BN13}
A.D. Bull and R.~Nickl.
\newblock Adaptive confidence sets in ${L}^2$.
\newblock {\em Probability Theory and Related Fields}, 156:889--919, 2013.

\bibitem{CP11}
E.~J. Cand{\`e}s and Y.~Plan.
\newblock Tight oracle inequalities for low-rank matrix recovery from a minimal
  number of noisy random measurements.
\newblock {\em IEEE Trans. Inform. Theory}, 57(4):2342--2359, 2011.

\bibitem{CK15}
A.~Carpentier and A.~Kim.
\newblock {An iterative hard thresholding estimator for low rank matrix
  recovery with explicit limiting distribution}.
\newblock {\em arxiv:1502.04654}, 2015.

\bibitem{CN15}
A.~Carpentier and R.~Nickl.
\newblock {On signal detection and confidence sets for low rank inference
  problems}.
\newblock {\em Electronic J. Stat.}, to appear, 2015.

\bibitem{Christandl}
M.~Christandl and R.~Renner.
\newblock Reliable quantum state tomography.
\newblock {\em Phys. Rev. Lett.}, 109:120403, 2012.

\bibitem{de2008brief}
R.~De~Eq.
\newblock {A brief introduction to Fourier analysis on the Boolean cube}.
\newblock {\em Theo. Comp.}, 1:1--20, 2008.

\bibitem{FGLE12}
S.~T Flammia, D.~Gross, Y.-K. Liu, and J.~Eisert.
\newblock Quantum tomography via compressed sensing: error bounds, sample
  complexity and efficient estimators.
\newblock {\em New J. Phys.}, 14(9):095022, 2012.

\bibitem{GN10}
E.~Gin{\'e} and R.~Nickl.
\newblock Confidence bands in density estimation.
\newblock {\em Ann. Statist.}, 38:1122--1170, 2010.

\bibitem{GN15}
E.~Gin\'e and R.~Nickl.
\newblock {\em Mathematical foundations of infinite-dimensional statistical
  models}.
\newblock to appear, Cambridge University Press, 2015.

\bibitem{G11}
D.~Gross.
\newblock Recovering low-rank matrices from few coefficients in any basis.
\newblock {\em IEEE Trans. Inf. Th.}, 57(3):1548--1566, 2011.

\bibitem{GLFBE10}
D.~Gross, Y.-K. Liu, S.~T Flammia, S.~Becker, and J.~Eisert.
\newblock Quantum state tomography via compressed sensing.
\newblock {\em Phys. Rev. Lett.}, 105(15):150401, 2010.

\bibitem{Guta}
M.~Guta, T.~Kypraios, and I.~Dryden.
\newblock Rank-based model selection for multiple ions quantum tomography.
\newblock {\em New J. Phys.}, 14:105002, 2012.

\bibitem{Blatt}
H.~Haeffner, W.~Haensel, C.~F. Roos, J.~Benhelm, D.~C. al~Kar, M.~Chwalla,
  T.~Koerber, U.~D. Rapol, M.~Riebe, P.~O. Schmidt, C.~Becher, O.~G{\"u}hne,
  W.~D¬ur, and R.~Blatt.
\newblock Scalable multi-particle entanglement of trapped ions.
\newblock {\em Nature}, 438:643, 2005.

\bibitem{HN11}
M.~Hoffmann and R.~Nickl.
\newblock On adaptive inference and confidence bands.
\newblock {\em Ann. Statist.}, 39:2382--2409, 2011.

\bibitem{holevo2001statistical}
A.~S. Holevo.
\newblock {\em Statistical structure of quantum theory}.
\newblock Springer, 2001.

\bibitem{ITV10}
Y.~I. Ingster, Tsybakov~A. B., and N.~Verzelen.
\newblock Detection boundary in sparse regression.
\newblock {\em Elec. J. Stat.}, 4:1476--1526, 2010.

\bibitem{JM14}
A.~Javanmard and A.~Montanari.
\newblock Confidence intervals and hypothesis testing for high-dimensional
  regression.
\newblock {\em J. Mach. Learn. Res.}, 15(1):2869--2909, 2014.

\bibitem{K11}
V.~Koltchinskii.
\newblock Von {N}eumann entropy penalization and low-rank matrix estimation.
\newblock {\em Ann. Statist.}, 39(6):2936--2973, 2011.

\bibitem{KLT11}
V.~Koltchinskii, K.~Lounici, and A.~B. Tsybakov.
\newblock Nuclear-norm penalization and optimal rates for noisy low-rank matrix
  completion.
\newblock {\em Ann. Statist.}, 39(5):2302--2329, 2011.

\bibitem{leonhardt}
U.~Leonhardt.
\newblock {\em Measuring the quantum state of light}.
\newblock Cambridge University Press, Cambridge, 2005.

\bibitem{L89}
K.-C. Li.
\newblock Honest confidence regions for nonparametric regression.
\newblock {\em Ann. Statist.}, 17:1001--1008, 1989.

\bibitem{L11}
Y.-K. Liu.
\newblock {Universal low-rank matrix recovery from Pauli measurements}.
\newblock In {\em Adv. Neur. Inf. Proc. Sys.}, pages 1638--1646, 2011.

\bibitem{NvdG13}
R.~Nickl and S.~van~de Geer.
\newblock Confidence sets in sparse regression.
\newblock {\em Ann. Statist.}, 41(6):2852--2876, 2013.

\bibitem{nielsen}
M.~A. Nielsen and I.~L. Chuang.
\newblock {\em Quantum computation and quantum information}.
\newblock Cambridge University Press, Cambridge, 2000.

\bibitem{peres}
A.~Peres.
\newblock {\em Quantum theory}.
\newblock Springer, Berlin, 1995.

\bibitem{Recht}
B.~Recht, M.~Fazel, and P.~A. Parrilo.
\newblock Guaranteed minimum-rank solutions of linear matrix equations via
  nuclear norm minimization.
\newblock {\em SIAM Rev.}, 52:471, 2010.

\bibitem{RV06}
J.~Robins and A.W. van~der Vaart.
\newblock Adaptive nonparametric confidence sets.
\newblock {\em Ann. Statist.}, 34:229--253, 2006.

\bibitem{shang2013optimal}
J.~Shang, H.~K. Ng, A.~Sehrawat, X.~Li, and B.-G. Englert.
\newblock Optimal error regions for quantum state estimation.
\newblock {\em New J. Phys.}, 15(12):123026, 2013.

\bibitem{Riofrio}
A.~Smith, C.~A. Riofrio, B.~E. Anderson, H.~Sosa-Martinez, I.~H. Deutsch, and
  P.~S. Jessen.
\newblock Quantum state tomography by continuous measurement and compressed
  sensing.
\newblock {\em Phys. Rev. A}, 87:030102(R), 2013.

\bibitem{temme2015quantum}
K.~Temme and F.~Verstraete.
\newblock Quantum chi-squared and goodness of fit testing.
\newblock {\em J. Math. Phys.}, 56(1):012202, 2015.

\bibitem{vdGBRD14}
S.~van~de Geer, P.~B{\"u}hlmann, Y.~Ritov, and R.~Dezeure.
\newblock On asymptotically optimal confidence regions and tests for
  high-dimensional models.
\newblock {\em Ann. Statist.}, 42(3):1166--1202, 2014.

\bibitem{ZZ14}
C.H. Zhang and S.~S. Zhang.
\newblock Confidence intervals for low dimensional parameters in high
  dimensional linear models.
\newblock {\em J. R. Stat. Soc. Ser. B. Stat. Methodol.}, 76(1):217--242, 2014.

\end{thebibliography}
\end{document}